\documentclass[11pt,reqno]{amsart}

\usepackage{amsfonts,amsmath,amsthm,amssymb,mathtools}
\usepackage{enumitem}
\usepackage{bm}
\usepackage[letterpaper,margin=2.5cm]{geometry}
\usepackage[hidelinks]{hyperref}

\newtheorem{theorem}{Theorem}[section]
\newtheorem{lemma}[theorem]{Lemma}
\newtheorem{proposition}[theorem]{Proposition}

\theoremstyle{definition}
\newtheorem{definition}[theorem]{Definition}

\newtheorem{problem}[theorem]{Problem}

\newtheorem*{ack}{Acknowledgments}

\theoremstyle{remark}
\newtheorem{remark}[theorem]{Remark}

\numberwithin{equation}{section}

\DeclarePairedDelimiter{\parens}{(}{)}

\newcommand{\inn}[2]{\langle #1,#2\rangle}

\newcounter{rom}
\renewcommand{\therom}{(\roman{rom})}
{\end{list}}

\title[Stable spacelike capillary hypersurfaces]
      {Rigidity of stable spacelike capillary hypersurfaces in de~Sitter and Minkowski spaces}

\author{Hui Ma}
\address{Department of Mathematical Sciences, Tsinghua University,
Beijing 100084, P.R. China} 
\email{ma-h@tsinghua.edu.cn}
\author{Jiaxu Ma}
\address{Department of Mathematical Sciences, Tsinghua University,
Beijing 100084, P.R. China}
\email{mjx22@mails.tsinghua.edu.cn}
\author{Mingxuan Yang}
\address{Academy of Mathematics and Systems Science, the Chinese Academy of Sciences, Beijing 100190, China}
\email{ymx20@amss.ac.cn}

\subjclass[2020]{Primary 53C42; Secondary 53C50, 53C24, 49Q10}
\keywords{spacelike capillary hypersurface, Lorentzian space form,
stability, Minkowski-type formula, conformal Killing field}

\date{}

\begin{document}

\begin{abstract}
We prove a rigidity theorem for compact spacelike capillary hypersurfaces
in de~Sitter and Minkowski spaces: volume-preserving stability forces total
umbilicity when the support is a spacelike totally umbilical hypersurface
of nonnegative intrinsic curvature. 
Using the light-cone model, 
we construct conformal Killing fields tangent to the support and derive a unified 
Minkowski-type formula valid in all Lorentzian space forms. The resulting
mean-zero functions satisfy an inhomogeneous Jacobi equation and
the linearized capillary Robin boundary condition,
and form canonical finite-dimensional test families.
A finite-trace identity, supplemented in the de~Sitter cases by a
nonpositive Dirichlet Green correction, detects the umbilicity defect
\(n|h|^2-H^2\) with a definite sign; hence, every
non-totally-umbilical hypersurface admits an admissible test function with
positive second variation. The construction extends to supports of negative
intrinsic curvature, where a unique timelike parameter direction prevents
the finite trace from being sign-definite.
\end{abstract}

\maketitle

\section{Introduction}
\label{sec:intro}

The stability of constant-mean-curvature (CMC) hypersurfaces is a
fundamental variational problem in submanifold geometry. In Riemannian
space forms, the classical theorem of Barbosa, do Carmo, and
Eschenburg~\cite{BCE} asserts that every closed stable CMC hypersurface
is totally umbilical. 
For spacelike hypersurfaces in Lorentzian space forms, the timelike
character of the unit normal changes the sign structure of the
second-variation formula relative to the Riemannian case. Stability is
therefore naturally formulated as an infinitesimal area-maximizing property.
The variational theory of
spacelike CMC hypersurfaces has been developed in several important
works; see, among others, Cheng--Yau~\cite{CY},
Treibergs~\cite{T82}, Akutagawa~\cite{Akutagawa87},
Montiel~\cite{Montiel88}, Oliker~\cite{Oliker92}, and
Barbosa--Oliker~\cite{BO93}.

The capillary problem is the natural boundary analogue of the CMC problem.
In the Riemannian setting, its stability and rigidity theory originated
with the work of Ros and Vergasta~\cite{RosVergasta95} on free-boundary
hypersurfaces in balls and Ros and Souam~\cite{RosSouam97} on capillary
hypersurfaces in balls. Subsequent developments treated wedges,
half-spaces, slabs, planar supports, Euclidean balls, and
horospherical supports in hyperbolic space; see
\cite{CK15,AinouzSouam16,LiXiong17,LiXiong18,GuoWangXia22}.
A general space-form rigidity theorem was proved by Wang and
Xia~\cite{WangXia19}: every immersed stable capillary hypersurface in a
geodesic ball of a Riemannian space form is totally umbilical. Their
argument combines suitable Minkowski-type formulas with the
volume-preserving stability inequality.

We formulate the corresponding problem in a time-oriented Lorentzian
manifold \((\overline M^{n+1},\overline g)\). Let
\(\mathcal P\subset\overline M\) be a spacelike support hypersurface and let
\[
x\colon\Sigma^n\longrightarrow\overline M
\]
be a compact spacelike immersion satisfying
\(x(\partial\Sigma)\subset\mathcal P\). Choose unit timelike normals
\(\nu\) and \(N\) to \(\Sigma\) and \(\mathcal P\), respectively, in the
same time cone. The contact angle, or relative rapidity, is
the positive function \(\gamma\) determined by
\[
\overline g(\nu,N)=-\cosh\gamma
\qquad\text{on }\partial\Sigma.
\]
The hypersurface \(\Sigma\) is capillary if both its mean curvature and
its contact angle are constant. Equivalently, it is a critical
point, among volume-preserving admissible variations, of the capillary
functional
\[
\mathcal E(t)=\mathcal A(t)-\cosh\gamma_0\,\mathcal W(t),
\]
where \(\mathcal A(t)\) is the area of the varied hypersurface,
\(\mathcal W(t)\) is the signed wetting energy associated with the support,
and \(\gamma_0>0\) is constant; see
\cite{ConcusFinn,Finn} for the classical capillary framework.

The Lorentzian stability problem is structurally different from its
Riemannian counterpart.
Previous works~\cite{Lopez06,Lopez08,PyoSeo11,ChenDengXieYin25} address symmetry and classification
questions for particular Lorentzian hypersurfaces with constant-angle boundary conditions, but do not yield
a volume-preserving stability-rigidity theorem in higher dimensions. To
the best of our knowledge, no previous theorem of this generality was available for compact spacelike capillary hypersurfaces.

The purpose of this paper is to prove a rigidity theorem for stable spacelike capillary hypersurfaces with
spacelike totally umbilical supports of nonnegative intrinsic curvature.
The key point is the construction of a canonical finite-dimensional
module of admissible Jacobi--Robin test functions. Rather than estimating
the second variation of a single test function, we compute an invariant
trace over this module. For the support geometries occurring in the main
theorem, the trace has a definite sign and recovers a weighted integral of
the umbilicity defect
\[
n|h|^2-H^2.
\]
This turns the stability condition directly into a rigidity conclusion.

We denote by \(\mathbb Q^{n+1}_1(c)\) the \((n+1)\)-dimensional
Lorentzian space form of constant sectional curvature \(c\). In particular,
\(\mathbb Q^{n+1}_1(1)\) is de~Sitter space and
\(\mathbb Q^{n+1}_1(0)\) is Minkowski space.

\begin{theorem}
\label{thm:intro-criterion}
Let \(\mathcal P\subset\mathbb Q^{n+1}_1(c)\), \(c\in\{0,1\}\), be a
connected spacelike totally umbilical hypersurface with nonnegative intrinsic
sectional curvature.
Let \(\Sigma^n\), \(n\geq2\), be a compact connected spacelike
capillary hypersurface with nonempty boundary supported on
\(\mathcal P\). Denote its constant contact angle by
\(\gamma>0\).
If \(\Sigma\) is stable under volume-preserving admissible variations,
then \(\Sigma\) is totally umbilical.
\end{theorem}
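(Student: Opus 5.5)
We follow the Wang--Xia strategy, adapted to the Lorentzian setting through the light-cone model. Realize \(\mathbb Q^{n+1}_1(c)\) either as the hyperquadric \(\{\langle X,X\rangle=1\}\subset\mathbb R^{n+2}_1\) for \(c=1\), or as \(\mathbb R^{n+1}_1\) for \(c=0\). In this model the spacelike totally umbilical support is \(\mathcal P=\{X:\langle X,a\rangle=b\}\), the intersection with an affine hyperplane, or with a hyperquadric in the flat case. The condition that \(\mathcal P\) have nonnegative intrinsic curvature forces the relevant parameter vector \(a\) to be timelike or null in the appropriate sense.

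\textbf{Step 1: the Minkowski-type formula.} For each parameter vector \(e\) in a suitable subspace \(V\), we build a conformal Killing field \(X_e\) on \(\mathbb Q^{n+1}_1(c)\) that is tangent to \(\mathcal P\) along \(\mathcal P\). These fields are the projections of constant vectors, combined with position-vector terms weighted by \(\langle X,a\rangle\). Write the conformal factor as \(\mathcal L_{X_e}\overline g=2\phi_e\overline g\). Integrating \(\operatorname{div}_\Sigma\) of the tangential part of \(X_e\), and correcting on \(\partial\Sigma\) by a term tangent to \(\mathcal P\) weighted by \(\cosh\gamma\), gives
\[
\int_\Sigma \bigl(n\phi_e+H\,\overline g(X_e,\nu)\bigr)=0 .
\]
The boundary term cancels precisely because \(\gamma\) is constant. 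Hence
\[
u_e=n\phi_e+H\,\overline g(X_e,\nu)
\]
has mean zero and is an admissible volume-preserving test function.

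\textbf{Step 2: the Jacobi--Robin structure.} Using \(\Delta\phi_e\) and the Codazzi identity for \(\overline g(X_e,\nu)\), we compute
\[
Ju_e=\Delta u_e-(|h|^2-nc)u_e .
\]
This is inhomogeneous, of the form \(Ju_e=-(n|h|^2-H^2)\,\overline g(X_e,\nu)\) plus a multiple of \(\phi_e\). On \(\partial\Sigma\), the umbilicity of \(\mathcal P\) and the constancy of \(\gamma\) show that \(u_e\) satisfies the linearized capillary Robin condition
\[
\partial_\mu u=q\,u,
\]
where \(q\) is built from \(h(\mu,\mu)\), \(\cosh\gamma\), \(\sinh\gamma\) and the umbilicity factor of \(\mathcal P\). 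Consequently the second variation reduces to an interior integral:
\[
Q(u_e,u_e)=-\int_\Sigma u_e\,Ju_e .
\]
Note the sign convention, since stability here means area maximizing.

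\textbf{Step 3: the finite trace and the main obstacle.} Sum \(Q(u_e,u_e)\) over an orthonormal basis of \(V\) with respect to the induced Lorentzian form. When \(\mathcal P\) has nonnegative curvature, \(V\) can be chosen with a definite form, so the trace is a genuine sum and no sign flips occur. Using identities such as \(\sum_e\overline g(X_e,\nu)^2=\) an explicit positive function of the position, and \(\sum_e\phi_e\,\overline g(X_e,\nu)=\) a divergence term, we aim to show that the trace equals
\[
\int_\Sigma w\,(n|h|^2-H^2)
\]
for a positive weight \(w\), up to a remainder. In Minkowski space the remainder should vanish outright. In de Sitter space the leftover terms involve \(\phi_e\) and do not cancel. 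We plan to absorb them using the solution \(G\) of a Dirichlet problem on \(\Sigma\): subtracting \(G\) from the test functions, or pairing against \(G\), produces a correction of sign \(\le0\). Controlling this correction is the main obstacle. It requires the maximum principle, applied to the operator in the sign regime where \(c=1\) helps, so that the correction only reinforces the defect term.

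\textbf{Step 4: conclusion.} Stability gives \(Q(u_e,u_e)\le0\) for each \(e\), so the trace is \(\le0\). On the other hand the trace is at least \(\int_\Sigma w\,(n|h|^2-H^2)\ge0\). Therefore \(n|h|^2=H^2\) everywhere, and \(\Sigma\) is totally umbilical. We must also handle the degenerate case where every \(u_e\) vanishes identically. In that case the Minkowski formula already forces \(\overline g(X_e,\nu)\) to be proportional to \(\phi_e\) for all \(e\), which again gives umbilicity.
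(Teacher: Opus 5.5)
Your architecture matches the paper's: support-tangent conformal Killing fields, mean-zero Jacobi--Robin test functions, a finite trace, and a nonpositive Dirichlet correction in de~Sitter. However, Step~1 is false as written, and the error propagates. Integrating the divergence of the tangential part of the support-tangent field $\mathcal T_{A,B}$ gives
\[
\int_\Sigma\bigl(n\psi_{A,B}+H\,\overline g(\mathcal T_{A,B},\nu)\bigr)\,d\Sigma=\lambda_A^2\cosh\gamma\int_{\partial\Sigma}\overline g(\mathcal X_B,\overline\nu)\,dS .
\]
This does not vanish in general; for a Minkowski hyperplane it is $n\cosh\gamma$ times the signed volume enclosed by $\partial\Sigma$ in $\mathcal P$. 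The flux is cancelled only by a second divergence identity, for the tangent field $\overline g(\mathcal X_A,\nu)\mathcal X_B-\overline g(\mathcal X_B,\nu)\mathcal X_A$. Its interior divergence $n\,\overline g(\mathcal K_{A,B},\nu)$, with $\mathcal K_{A,B}$ Killing, must then enter the test function:
\[
\varphi_{A,B}=n\psi_{A,B}+H\,\overline g(\mathcal T_{A,B},\nu)-n\lambda_A\cosh\gamma\,\overline g(\mathcal K_{A,B},\nu).
\]
The same Killing term is what makes your Robin claim true. The function $\overline g(\mathcal T_{A,B},\nu)$ satisfies $(\nabla_\mu-q)f=0$ on its own, but $\psi_{A,B}$ does not: for the hyperplane $\psi=1$ and $(\nabla_\mu-q)1=-\coth\gamma\,h(\mu,\mu)$. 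Only the combination $\psi_{A,B}-\lambda_A\cosh\gamma\,\overline g(\mathcal K_{A,B},\nu)$ does. Since $J\,\overline g(\mathcal K_{A,B},\nu)=0$, the Jacobi action is then exactly $J\varphi_{A,B}=-(n|h|^2-H^2)\psi_{A,B}$. So the defect multiplies the conformal factor, not $\overline g(X_e,\nu)$, and there is no extra $\phi_e$-term. The contraction you must compute is therefore $\sum_\alpha\varphi_\alpha\psi_\alpha$, not $\sum_e\overline g(X_e,\nu)^2$.

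Step~3, which you leave open, is where the proof actually happens, and it needs no maximum principle because the Dirichlet potential is explicit. Take $\Phi_a=n\bigl(\langle X_a,\nu\rangle-\lambda\cosh\gamma\bigr)-H\bigl(\langle x,a\rangle-\tau\bigr)$. It vanishes on $\partial\Sigma$ because $X_a=-\lambda N$ on $\mathcal P_{a,\tau}$, and it satisfies $\Delta\Phi_a=(n|h|^2-H^2)\langle X_a,\nu\rangle$. Hence $\mathcal G_a=\int_\Sigma\Phi_a\Delta\Phi_a=-\int_\Sigma|\nabla\Phi_a|^2\le0$. In the round case, adding $\mathcal G_a$ to $\sum_\alpha Q(\varphi_\alpha,\varphi_\alpha)$ cancels every $H$- and $\cosh\gamma$-term and leaves $n\int_\Sigma(n|h|^2-H^2)\langle X_a^\top,X_a^\top\rangle$.

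Your claim that nonnegative curvature yields a definite parameter space fails in the two zero-curvature cases. There $\operatorname{span}\{A,E_c\}$ is degenerate, and all directions but one (modulo it) give $\psi\equiv0$, hence $Q=0$. One must work with a single canonical function: $\psi=-\langle x,a\rangle$ for the horosphere and $\psi_0=1$ for the hyperplane. These cases need cancellations you do not supply:
\begin{itemize}
\item for the horosphere, $\int_\Sigma(n|h|^2-H^2)\bigl(\langle x,a\rangle\langle K_{a,b},\nu\rangle+\langle X_a,\nu\rangle\bigr)\,d\Sigma=0$, obtained from Green's formula, the Robin condition and the classical Minkowski formula for $\partial\Sigma$ inside the flat horosphere;
\item for the hyperplane, $\int_\Sigma(n|h|^2-H^2)\langle x,\nu\rangle\,d\Sigma=0$.
\end{itemize}

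Finally, the de~Sitter weight is only nonnegative, so you need to show that $X_a^\top=0$ on an open set forces umbilicity there. In Minkowski space the weight satisfies $-n(1+\cosh\gamma\,\langle N,\nu\rangle)\ge n(\cosh\gamma-1)>0$, which uses $\gamma>0$. Your separate ``degenerate case'' is unnecessary, since a positive trace already produces some $\varphi$ with $Q(\varphi,\varphi)>0$.
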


By the classification of spacelike totally umbilical hypersurfaces,
the support geometries in Theorem~\ref{thm:intro-criterion} are precisely:

\begin{itemize}
\item a spacelike totally geodesic hyperplane in Minkowski space;
\item a round spacelike totally umbilical hypersurface in de~Sitter space;
\item a horospherical spacelike totally umbilical hypersurface in de~Sitter
space.
\end{itemize}

The proof has three main ingredients. First, the light-cone model produces
support-tangent conformal Killing fields for spacelike totally umbilical supports in arbitrary Lorentzian space forms. The associated
divergence identities yield a unified Minkowski-type formula and, in particular,
canonical mean-zero functions. This part of the construction does not
require the support to have nonnegative intrinsic curvature.

Second, the resulting functions satisfy an inhomogeneous Jacobi
equation and the linearized capillary Robin boundary condition. They form a finite-dimensional module of admissible test functions
for the volume-preserving index form. This Jacobi--Robin structure is
intrinsic to the capillary problem.

Third, we compute the invariant trace of the index form over the canonical
module. For the three support geometries listed above, a nonpositive
Dirichlet Green correction in the de~Sitter cases reduces the trace
estimate to a positive weighted integral of \(n|h|^2-H^2\). If the umbilicity defect does not vanish identically,
at least one admissible test function has
positive second variation, contradicting stability.

The curvature assumption in Theorem~\ref{thm:intro-criterion} enters only
at this final finite-trace step. The light-cone construction, the
Minkowski-type formula, and the Jacobi--Robin identities remain valid when the
support has negative intrinsic curvature. In that case, however, the
parameter module has Lorentzian signature. The invariant trace is therefore no longer
sign-definite. This identifies a precise obstruction to extending the
present rigidity argument to negative-curvature supports.

The rest of the paper is organized as follows.
Section~\ref{sec:prelim} introduces the boundary geometry and the
variational formulas. Sections~\ref{sec:light_cone} and
\ref{sec:Jacobi-Robin} develop the light-cone construction, the 
Minkowski-type formula, and the Jacobi--Robin identities.
Sections~\ref{sec:finite-trace} and \ref{sec:rigidity} establish the
finite-trace identity and prove the rigidity theorem.
Section~\ref{sec:negative-curvature} analyzes the signature obstruction
for negative-curvature supports, while Appendix~A gives a self-contained
derivation of the Lorentzian capillary second-variation formula.

\section{Geometric setup and variational formulas}
\label{sec:prelim}

\subsection{Capillary hypersurfaces and boundary geometry}

Let \((\overline M,\overline g)\) be an oriented and time-oriented
\((n+1)\)-dimensional Lorentzian spacetime, and let
\(\mathcal P\subset \overline M\) be a spacelike hypersurface with a chosen
future-directed unit timelike normal \(N\). Let \(x:\Sigma\to \overline M\) be a
spacelike immersion of a compact \(n\)-manifold with boundary such that
\(x(\partial\Sigma)\subset\mathcal P\).

The time orientation determines a unique future-directed unit timelike normal
field \(\nu\) on all of \(\Sigma\). We denote by
\(g:=x^*\overline g\)
the induced Riemannian metric on \(\Sigma\), and define the second fundamental
form of \(\Sigma\) with respect to \(\nu\) by
\[
h(X,Y):=\overline g(\overline\nabla_X\nu,Y),
\qquad X,Y\in T\Sigma.
\]
The mean curvature is defined by
\[
H:=\operatorname{tr}_g h
=\sum_{i=1}^n h(e_i,e_i)
=\operatorname{div}_\Sigma\nu,
\]
where \(\{e_i\}_{i=1}^n\) is any local \(g\)-orthonormal frame on \(\Sigma\).

We denote by \(h^{\mathcal P}\) the second fundamental form of
\(\mathcal P\) with respect to \(N\), defined by
\[
h^{\mathcal P}(X,Y)
:=
\overline g(\overline\nabla_XN,Y),
\qquad X,Y\in T\mathcal P.
\]

Along \(\partial\Sigma\), the fields \(\nu\) and \(N\) satisfy
\[
\overline g(\nu,N)=-\cosh\gamma.
\]
Throughout, we assume that the intersection is transverse, so that
\(\gamma>0\).

Let \(\mu\) be the outward unit conormal of \(\partial\Sigma\) in \(\Sigma\).
We fix the side of the support so that the boundary frame satisfies
\begin{equation}\label{eq:boundary-frame}
N=\cosh\gamma\,\nu-\sinh\gamma\,\mu.
\end{equation}

Define
\[
    \overline\nu=-\sinh\gamma\,\nu+\cosh\gamma\,\mu.
\]
Then \(\overline\nu\) is the unit conormal of \(\partial\Sigma\) in
\(\mathcal P\) compatible with the chosen side.
Note that the normal space of \(\partial\Sigma\) in \(\overline M\) is spanned
by either of the ordered frames \((\nu,\mu)\) and
\((N,\overline\nu)\), which induce the same orientation.
We also have
\begin{equation}\label{eq:boundary-frame-inverse}
\begin{aligned}
   \nu&=\cosh\gamma\,N+\sinh\gamma\,\overline\nu,\\
   \mu&=\sinh\gamma\,N+\cosh\gamma\,\overline\nu.
\end{aligned}
\end{equation}

\subsection{The capillary energy functional and its variation}

Let \(x:(-\varepsilon,\varepsilon)\times\Sigma\to \overline M\) be an admissible variation 
of the immersion \(x_0:=x(0,\cdot)\); that is,
\(x(t,\partial\Sigma)\subset\mathcal{P}\)
for every \(t\in(-\varepsilon,\varepsilon)\).

The volume, area and the wetting energy functional are defined by  
\[
\mathcal V(t)=\int_{[0,t]\times\Sigma}x^*dV_{\overline M}, \quad
\mathcal A(t)=\int_{\Sigma_t} d\Sigma_t, \quad
\mathcal W(t)=\int_{[0,t]\times\partial\Sigma }x^*dV_{\mathcal{P}},
\]
where \(d\Sigma_t\) is the area element of
\(\Sigma_t:=x_t(\Sigma)\), and \(dV_{\mathcal P}\) is the
\(n\)-dimensional volume form of \(\mathcal P\). Let
\[
Y_t=\frac{\partial x_t}{\partial t},
\qquad
f_t=-\overline g(Y_t,\nu_t)
\]
denote the variational field and its normal speed, respectively.
Fix a constant contact angle \(\gamma_0>0\). The capillary energy functional is
\[
\mathcal E(t)=\mathcal A(t)-\cosh\gamma_0\,\mathcal W(t).
\]

\begin{lemma}[First variation of the capillary energy]\label{lem:first-var-unified}
  Let \(x_t\colon\Sigma\to \overline M\) be an admissible variation of
  \(x_0=x\).
  Then
  \begin{equation}\label{eq:first-var-cap-unified}
    \mathcal E'(t) = \int_{\Sigma_t} H(t)f_t\,d\Sigma_t
    + \int_{\partial\Sigma_t} \overline g\bigl(Y_t,\,\mu_t - \cosh\gamma_0\,\overline\nu_t\bigr)\,dS_t .
  \end{equation}
\end{lemma}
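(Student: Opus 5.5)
The plan is to differentiate the two pieces of \(\mathcal E\) separately and then combine them. We work at a fixed time \(t\) and, by reparametrizing, at \(t=0\) when convenient.

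\textbf{Area term.} Write \(Y_t=Y_t^\top-\overline g(Y_t,\nu_t)\,\nu_t\cdot(-1)\). Since \(\overline g(\nu,\nu)=-1\), the normal component of \(Y_t\) is \(-\overline g(Y_t,\nu_t)\nu_t=f_t\nu_t\), so
\[
Y_t=Y_t^\top+f_t\,\nu_t .
\]
The standard formula gives
\[
\frac{d}{dt}\,d\Sigma_t=\operatorname{div}_{\Sigma_t}Y_t\,d\Sigma_t,
\qquad
\operatorname{div}_{\Sigma_t}(f_t\nu_t)=f_t\operatorname{div}_{\Sigma_t}\nu_t=f_tH(t).
\]
The second identity holds because \(f_t\nu_t\) is normal, so \(e_i(f_t)\,\overline g(\nu_t,e_i)=0\), and by definition \(H=\operatorname{div}_\Sigma\nu\). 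The sign convention needs no extra care here: \(H=\sum\overline g(\overline\nabla_{e_i}\nu,e_i)\) is exactly the divergence. Applying the divergence theorem on \(\Sigma_t\) to the tangential part then gives
\[
\mathcal A'(t)=\int_{\Sigma_t}H f_t\,d\Sigma_t+\int_{\partial\Sigma_t}\overline g(Y_t,\mu_t)\,dS_t .
\]
In the boundary integral we may write \(\overline g(Y_t^\top,\mu_t)=\overline g(Y_t,\mu_t)\), since \(\mu_t\perp\nu_t\).

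\textbf{Wetting term.} Since \(\mathcal W(t)=\int_{[0,t]\times\partial\Sigma}x^*dV_{\mathcal P}\), the fundamental theorem of calculus gives
\[
\mathcal W'(t)=\int_{\partial\Sigma}\bigl(\iota_{\partial_t}x^*dV_{\mathcal P}\bigr)\big|_{\{t\}\times\partial\Sigma}.
\]
The map \(x\) restricted to \([0,t]\times\partial\Sigma\) takes values in \(\mathcal P\), so \(Y_t|_{\partial\Sigma}\) is tangent to \(\mathcal P\). Contracting \(dV_{\mathcal P}\) with \(Y_t\) and restricting to \(T\partial\Sigma_t\) picks out only the component of \(Y_t\) along the unit conormal \(\overline\nu_t\) of \(\partial\Sigma_t\) in \(\mathcal P\). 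This gives
\[
\mathcal W'(t)=\int_{\partial\Sigma_t}\overline g(Y_t,\overline\nu_t)\,dS_t .
\]

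\textbf{Main obstacle: the orientation sign.} The only real issue is the sign of this last identity. It must agree with the orientation of \(\mathcal V\) and with the chosen side in \eqref{eq:boundary-frame}. We use the convention that \(\mathcal W\) is oriented so that \(\overline\nu\) points outward from the wetted region, which is compatible with the orientation statement that \((\nu,\mu)\) and \((N,\overline\nu)\) induce the same orientation of the normal bundle of \(\partial\Sigma\).

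Subtracting \(\cosh\gamma_0\,\mathcal W'(t)\) from \(\mathcal A'(t)\) yields \eqref{eq:first-var-cap-unified}.

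As a consistency check, decompose \(Y_t\) on the boundary using \eqref{eq:boundary-frame-inverse}. The boundary integrand becomes
\[
\overline g(Y_t,\overline\nu_t)\,\bigl(\cosh\gamma-\cosh\gamma_0\bigr)+\dots,
\]
which vanishes precisely when \(\gamma=\gamma_0\). This confirms that the signs were chosen correctly.
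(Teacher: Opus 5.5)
Your proof is correct and follows the paper's argument. For the area term you decompose $Y_t=f_t\nu_t+Y_t^\top$, use $\operatorname{div}_{\Sigma_t}\nu_t=H(t)$, and apply the divergence theorem. For the wetting term you use that $Y_t$ is tangent to $\mathcal P$ along $\partial\Sigma_t$, which leaves only the conormal flux $\int_{\partial\Sigma_t}\overline g(Y_t,\overline\nu_t)\,dS_t$; your interior-product computation and orientation discussion simply spell out what the paper leaves implicit.

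Two minor slips. Your first displayed decomposition $Y_t=Y_t^\top-\overline g(Y_t,\nu_t)\nu_t\cdot(-1)$ actually equals $Y_t^\top-f_t\nu_t$, although you correct it at once to $Y_t^\top+f_t\nu_t$. In your consistency check, the boundary integrand is exactly $\overline g(Y_t,\overline\nu_t)(\cosh\gamma-\cosh\gamma_0)$, with no further terms.
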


\begin{proof}
  The area functional has the standard first variation
  \[
    \mathcal A'(t) = \int_{\Sigma_t} \operatorname{div}_{\Sigma_t} Y_t\,d\Sigma_t .
  \]
  Decompose \(Y_t = f_t\nu_t + Y_t^\top\).
  Since \(\nu_t\) is a unit timelike normal,
  \(\operatorname{div}_{\Sigma_t}\nu_t=H(t)\). The divergence theorem
  therefore gives
  \[
    \mathcal A'(t) = \int_{\Sigma_t} H(t)f_t\,d\Sigma_t + \int_{\partial\Sigma_t} \overline g(Y_t,\mu_t)\,dS_t .
  \]

  The quantity \(\mathcal W(t)\) is the signed wetting energy associated
  with the region traced out by the moving contact boundary on \(\mathcal P\).
  Since \(x_t(\partial\Sigma)\subset\mathcal P\), the variation field
  \(Y_t\) is tangent to \(\mathcal P\) along \(\partial\Sigma_t\). Hence,
  \[
    \mathcal W'(t) = \int_{\partial\Sigma_t} \overline g(Y_t,\overline\nu_t)\,dS_t ,
  \]
  where \(\overline\nu_t\) is the corresponding unit conormal of
  \(\partial\Sigma_t\) in \(\mathcal P\).

  Combining these identities gives \eqref{eq:first-var-cap-unified}.
\end{proof}

The first variation of the volume is
\[
    \mathcal V'(t)=\int_{\Sigma_t}f_t\,d\Sigma_t.
\]
Accordingly, an admissible variation is volume-preserving if
\[
\int_{\Sigma_t}f_t\,d\Sigma_t=0
\]
for every \(t\).

\begin{definition}
A spacelike immersion \(x\colon\Sigma\to \overline M\) satisfying
\(x(\partial\Sigma)\subset\mathcal P\) is called capillary if it is a
critical point of \(\mathcal E\) under admissible volume-preserving
variations.
\end{definition}

\begin{proposition}
The immersion \(x\colon\Sigma\to \overline M\) is capillary if and only if \(H\)
is constant and
\[
\overline g(\nu,N)=-\cosh\gamma_0
\qquad\text{along }\partial\Sigma.
\]
\end{proposition}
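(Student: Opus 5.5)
Evaluate the first variation of Lemma~\ref{lem:first-var-unified} at \(t=0\) and analyze the boundary term. Along \(\partial\Sigma\), admissibility forces \(Y\) to be tangent to \(\mathcal P\), so \(Y\in T\partial\Sigma\oplus\mathbb R\,\overline\nu\). The vectors \(\mu\) and \(\overline\nu\) are both orthogonal to \(T\partial\Sigma\), so only the \(\overline\nu\)-component of \(Y\) contributes. Write \(Y=Y^{\partial}+\lambda\overline\nu\) with \(\lambda=\overline g(Y,\overline\nu)\). By \eqref{eq:boundary-frame-inverse} we have \(\overline g(\mu,\overline\nu)=\cosh\gamma\), using \(\overline g(\overline\nu,\overline\nu)=1\) and \(\overline g(N,\overline\nu)=0\). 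Hence
\[
\overline g\bigl(Y,\mu-\cosh\gamma_0\,\overline\nu\bigr)=\lambda\,(\cosh\gamma-\cosh\gamma_0).
\]
Also \(f=-\overline g(Y,\nu)=-\lambda\,\overline g(\overline\nu,\nu)=\lambda\sinh\gamma\), using \(\overline g(\overline\nu,\nu)=-\sinh\gamma\) from \eqref{eq:boundary-frame-inverse}. Since \(\gamma>0\), this gives \(\lambda=f/\sinh\gamma\), and so
\[
\mathcal E'(0)=\int_\Sigma Hf\,d\Sigma+\int_{\partial\Sigma}\frac{\cosh\gamma-\cosh\gamma_0}{\sinh\gamma}\,f\,dS.
\]

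\emph{Sufficiency.} If \(H\) is constant and \(\gamma\equiv\gamma_0\), the boundary term vanishes and \(\mathcal E'(0)=H\int_\Sigma f=0\) for every volume-preserving admissible variation.

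\emph{Necessity, step 1 (\(H\) is constant).} Take any smooth \(\varphi\) with compact support in the interior of \(\Sigma\) and \(\int_\Sigma\varphi=0\). Realize it as the normal speed of an admissible volume-preserving variation supported away from \(\partial\Sigma\). Start from \(x+t\varphi\nu\) via the exponential map, then correct the volume by a second interior normal variation with nonzero mean, using the implicit function theorem in the standard Barbosa--do Carmo way. The spacelike condition is open, so it persists for small \(t\). Then \(\int_\Sigma H\varphi=0\) for all such \(\varphi\), which forces \(H\equiv\bar H\) constant.

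\emph{Necessity, step 2 (contact angle).} With \(H\) constant, \(\mathcal E'(0)=\int_{\partial\Sigma}\frac{\cosh\gamma-\cosh\gamma_0}{\sinh\gamma}f\) for volume-preserving variations. Given any smooth \(u\) on \(\partial\Sigma\), build an admissible variation whose boundary speed is \(Y=(u/\sinh\gamma)\,\overline\nu\) along \(\partial\Sigma\). To do this, extend this field to a vector field on \(\overline M\) that is tangent to \(\mathcal P\) along \(\mathcal P\), and flow it; the flow preserves \(\mathcal P\), so the variation is admissible. Then add an interior variation to restore volume to first order, followed by the implicit-function correction as before. The interior correction does not affect the boundary integral, so \(\int_{\partial\Sigma}(\cosh\gamma-\cosh\gamma_0)u/\sinh\gamma=0\) for all \(u\). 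This gives \(\cosh\gamma=\cosh\gamma_0\), i.e.\ \(\overline g(\nu,N)=-\cosh\gamma_0\).

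The main obstacle is the construction of admissible volume-preserving variations with prescribed normal speed, in particular with prescribed boundary values, while keeping the boundary on \(\mathcal P\) and the hypersurface spacelike. The approach above handles it by flowing an ambient vector field tangent to \(\mathcal P\) and then making the volume correction in the interior.
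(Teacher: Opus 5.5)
Your proof is correct and follows the standard argument the paper relies on: the proposition is stated without proof as a direct consequence of Lemma~\ref{lem:first-var-unified}, the formula for $\mathcal V'$, the boundary decomposition $Y=f\nu-f\coth\gamma\,\mu+Y^{\partial\Sigma}$, and the realization lemma mentioned in the remark after Lemma~\ref{lem:second-variation}. There are two small points to fix. First, $\overline g(\overline\nu,\nu)=+\sinh\gamma$, so $\lambda=-f/\sinh\gamma$ and the boundary integrand is $-\frac{\cosh\gamma-\cosh\gamma_0}{\sinh\gamma}f$; the sign does not matter in either direction. Second, extending the prescribed boundary field to an ambient vector field on $\overline M$ tacitly assumes $x|_{\partial\Sigma}$ is injective. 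For a general immersion you should instead build the boundary-moving variation directly on $\Sigma$, for instance by moving the $\mathcal P$-component in Fermi coordinates $(y,r)\mapsto\exp_y(rN(y))$ about $\mathcal P$ while keeping $r$ fixed.
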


Henceforth, for a capillary hypersurface, we write \(\gamma=\gamma_0\)
for its constant contact angle. From now on, we assume that the support \(\mathcal P\) is totally umbilical.

\begin{lemma}[Second variation formula]\label{lem:second-variation}
Let \(\Sigma\) be a spacelike capillary hypersurface supported on \(\mathcal P\). For every admissible volume-preserving variation with initial normal speed
\(f=-\overline g(Y,\nu)\),
    \begin{equation}\label{eq:second-variation}
        \mathcal E''(0)=Q(f,f):=\int_\Sigma fJf\,d\Sigma
        -\int_{\partial\Sigma}f(\nabla_\mu f-qf)\,dS,
    \end{equation}
where
\[
q=-\frac{1}{\sinh\gamma}
  h^{\mathcal P}(\overline\nu,\overline\nu)
  +\coth\gamma\,h(\mu,\mu),
\]
and
\[
J=\Delta-|h|^2-\overline{\operatorname{Ric}}(\nu,\nu)
\]
is the Jacobi operator.

A spacelike capillary hypersurface \(\Sigma\) is stable if and only if
\[
Q(f,f)\leq0
\]
for every \(f\in C^\infty(\Sigma)\) satisfying
\[
\int_\Sigma f\,d\Sigma=0.
\]
\end{lemma}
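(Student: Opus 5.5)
We compute \(\mathcal E''(0)\) by differentiating the first-variation formula of Lemma~\ref{lem:first-var-unified} at \(t=0\), at a capillary immersion. Here \(H\) is constant, and \(\mu-\cosh\gamma\,\overline\nu\) is a known multiple of \(N\). Indeed, by \eqref{eq:boundary-frame-inverse}, \(\mu-\cosh\gamma\,\overline\nu=\sinh\gamma\,N\). Since \(Y_t\) is tangent to \(\mathcal P\) along \(\partial\Sigma_t\), the boundary term at \(t=0\) vanishes. By the volume constraint, \(\int H(t)f_t\) equals \(\int (H(t)-H)f_t\). The derivative therefore splits into an interior term \(\int_\Sigma H'(0)f\,d\Sigma\) and a boundary term \(\int_{\partial\Sigma}\frac{d}{dt}\big|_{0}\overline g(Y_t,\mu_t-\cosh\gamma\,\overline\nu_t)\,dS\). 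The second step is the Lorentzian linearization of mean curvature. For the normal part \(f\nu\) with \(f=-\overline g(Y,\nu)\), and with \(\nu\) timelike, one gets \(H'(0)=\Delta f-(|h|^2+\overline{\operatorname{Ric}}(\nu,\nu))f+Y^\top(H)\). The last term vanishes since \(H\) is constant, so \(H'(0)=Jf\). This yields \(\int_\Sigma fJf\). The signs come from \(\overline g(\nu,\nu)=-1\), and these must be tracked carefully against the convention \(h(X,Y)=\overline g(\overline\nabla_X\nu,Y)\).

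The main obstacle is the boundary term. Write \(Y=f\nu+Y^\top\) along \(\partial\Sigma\). Since \(Y\in T\mathcal P\) and \(\overline g(Y,N)=0\), the relation \(N=\cosh\gamma\,\nu-\sinh\gamma\,\mu\) gives \(\overline g(Y,\mu)\) in terms of \(f\). Explicitly, \(\overline g(Y,\mu)=-f\coth\gamma\) up to sign conventions, so \(Y=f\nu+ (f\coth\gamma)\mu+T\) with \(T\) tangent to \(\partial\Sigma\), equivalently \(Y=\frac{f}{\sinh\gamma}\,\overline\nu+T\) up to sign. The key computation is \(\frac{d}{dt}\overline g(Y_t,\sinh\gamma_t\,N_t)\) along the moving boundary. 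Because \(\overline g(Y_t,N_t)=0\) for all \(t\), this reduces to \(\sinh\gamma\,\overline g(Y,N)'\)-type terms, plus the variation of the angle, \(\frac{d}{dt}\cosh\gamma_t\). So we compute the first variation of the contact angle \(\overline g(\nu_t,N_t)\). For this we use \(\nu'=\nabla f+\) tangential terms from \(h(Y^\top,\cdot)\), which is the Lorentzian sign flip of the Riemannian \(\nu'=-\nabla f\). We also use \(N'=\overline\nabla_Y N=h^{\mathcal P}(Y,\cdot)\). Total umbilicity of \(\mathcal P\) is used here: it gives \(h^{\mathcal P}(Y,\overline\nu)=h^{\mathcal P}(\overline\nu,\overline\nu)\,\overline g(Y,\overline\nu)\), with no cross terms involving \(T\).

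Substituting \(\overline g(Y,\overline\nu)=f/\sinh\gamma\) and \(\overline g(Y,\mu)=f\coth\gamma\), the terms combine into \(-f\nabla_\mu f\) plus \(f^2\bigl(\coth\gamma\,h(\mu,\mu)-\frac{1}{\sinh\gamma}h^{\mathcal P}(\overline\nu,\overline\nu)\bigr)\). The tangential contributions along \(T\) cancel, since \(\gamma\) is constant along \(\partial\Sigma\). This gives exactly \(-\int_{\partial\Sigma}f(\nabla_\mu f-qf)\,dS\). Some of the cancellation needs \(h(\mu,T)\) terms, which we handle by expressing \(\overline\nabla_T\nu\) in the frame \((N,\overline\nu)\).

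For the stability characterization, one direction is immediate: it is the definition together with the formula just proved. For the converse, every smooth \(f\) with \(\int_\Sigma f=0\) must be realized as the normal speed of an admissible volume-preserving variation. We build this in the standard way. First take a vector field \(Y\) equal to \(f\nu+(f\coth\gamma)\mu\) near \(\partial\Sigma\), arranged to be tangent to \(\mathcal P\), and flow it to get an admissible variation. Then correct the volume by a two-parameter variation and apply the implicit function theorem, as in Barbosa--do Carmo--Eschenburg. The correction does not change \(\mathcal E''(0)\), because \(H\) is constant and the first variation vanishes on volume-preserving directions.
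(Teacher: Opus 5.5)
Your route is valid and differs from the paper's. Since \(Y_t\in T\mathcal P\) along \(\partial\Sigma_t\), the boundary integrand is \(\overline g(Y_t,\mu_t-\cosh\gamma\,\overline\nu_t)=(\cosh\gamma_t-\cosh\gamma)\,\overline g(Y_t,\overline\nu_t)\). Its derivative at \(t=0\) is therefore \(\bigl(\frac{d}{dt}\cosh\gamma_t\bigr)\overline g(Y,\overline\nu)\), so you only need the first variation of the contact angle, \(\frac{d}{dt}\overline g(\nu_t,N_t)=\overline g(\nabla f+S_0(Y^\top),N)+\overline g(\nu,\overline\nabla_YN)\).

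The paper instead computes the full variations of \(\nu_t,\mu_t,\overline\nu_t\) (Lemma~\ref{pointvariation}) and the term \(\sinh\gamma\,\overline g(\overline\nabla_{d/dt}Y_t,N)=-\sinh\gamma\,h^{\mathcal P}(Y,Y)\). It then uses the \(S_1\)--\(S_2\) identity, the principal-direction property and \eqref{hdecomp} to cancel the \(Y^{\partial\Sigma}\)-terms.

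Your version is shorter and exhibits \(\nabla_\mu-q\) directly as the linearized contact-angle operator, which the paper only remarks on later. Moreover, the \(T\)-contributions combine into \(T\bigl(\overline g(\nu,N)\bigr)=-T(\cosh\gamma)=0\). So this step needs neither umbilicity of \(\mathcal P\) nor your separate \(h(\mu,T)\) discussion. In exchange, the paper's version gives pointwise variation formulas for the whole boundary frame.

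As written, however, your signs are wrong exactly where the lemma has content. From \(\overline g(Y,N)=0\) and \(N=\cosh\gamma\,\nu-\sinh\gamma\,\mu\) one gets \(\overline g(Y,\mu)=-f\coth\gamma\) and \(\overline g(Y,\overline\nu)=-f/\sinh\gamma\). Equivalently, \(Y=f\nu-f\coth\gamma\,\mu+T=-\frac{f}{\sinh\gamma}\,\overline\nu+T\).

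You substitute \(+f\coth\gamma\) and \(+f/\sinh\gamma\). With those values the computation gives \(\frac{d}{dt}\cosh\gamma_t=\sinh\gamma(\nabla_\mu f+qf)\) and boundary integrand \(+f(\nabla_\mu f+qf)\), which is the wrong formula. Likewise, your realization field \(f\nu+f\coth\gamma\,\mu\) satisfies \(\overline g(\cdot,N)=-2f\cosh\gamma\ne0\), so it is not tangent to \(\mathcal P\); the correct field is \(f\nu-f\coth\gamma\,\mu\).

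With the correct signs,
\[
\frac{d}{dt}\Big|_{t=0}\overline g(\nu_t,N_t)=-\sinh\gamma\,\nabla_\mu f+f\cosh\gamma\,h(\mu,\mu)-f\,h^{\mathcal P}(\overline\nu,\overline\nu)=-\sinh\gamma\,(\nabla_\mu f-qf),
\]
and the boundary integrand is \(\sinh\gamma(\nabla_\mu f-qf)\cdot(-f/\sinh\gamma)=-f(\nabla_\mu f-qf)\), as required.

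Finally, the quantity \(\frac{d}{dt}\overline g(Y_t,\sinh\gamma_t N_t)\) that you call the key computation is identically zero. The relevant decomposition is \(\mu_t-\cosh\gamma\,\overline\nu_t=\sinh\gamma_t N_t+(\cosh\gamma_t-\cosh\gamma)\,\overline\nu_t\).
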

A proof with the Lorentzian sign conventions used in this paper is given in Appendix~\ref{sec:second-var-unified}.

\begin{remark}
The equivalence in Lemma~\ref{lem:second-variation} uses the standard
realization lemma. If
\(f\in C^\infty(\Sigma)\) satisfies
\(\int_\Sigma f\,d\Sigma=0,\)
then \(f\) is the initial normal speed of an admissible volume-preserving
variation.  Along the boundary, one first prescribes
\[
Y=f\nu-f\coth\gamma\,\mu,
\]
which is tangent to \(\mathcal P\) because
\[
N=\cosh\gamma\,\nu-\sinh\gamma\,\mu.
\]
A two-parameter correction, together with the implicit function theorem,
then yields exact volume preservation without changing the initial
normal speed.
\end{remark}

\begin{proposition}\label{prop:mu-principal}
  Let \(\Sigma\) be a spacelike capillary hypersurface in a Lorentzian space form \(\overline M^{n+1}(c)\), supported on a totally umbilical spacelike hypersurface \(\mathcal P\) and meeting \(\mathcal P\) at a constant contact angle \(\gamma>0\).
  Then \(\mu\) is a principal direction of \(\Sigma\) along \(\partial\Sigma\); that is,
  \[
    h(e,\mu)=0\qquad\text{for all } e\in T(\partial\Sigma).
  \]
  In particular,
  \begin{equation}\label{eq:Dnu}
    \overline\nabla_\mu\nu = h(\mu,\mu)\,\mu .
  \end{equation}
\end{proposition}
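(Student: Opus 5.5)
The plan is the classical Riemannian argument (as in Ros--Souam / Wang--Xia), adapted to the Lorentzian boundary frame. Fix \(e\in T(\partial\Sigma)\), extended to a local vector field tangent to \(\partial\Sigma\). We express \(h(e,\mu)\) through the support geometry using the frames \eqref{eq:boundary-frame} and \eqref{eq:boundary-frame-inverse}. Since \(e\) is tangent to both \(\Sigma\) and \(\mathcal P\), we can differentiate \(\nu=\cosh\gamma\,N+\sinh\gamma\,\overline\nu\) along \(e\). Because \(\gamma\) is constant along \(\partial\Sigma\), this gives
\[
h(e,\mu)=\overline g(\overline\nabla_e\nu,\mu)
=\cosh\gamma\,\overline g(\overline\nabla_eN,\mu)+\sinh\gamma\,\overline g(\overline\nabla_e\overline\nu,\mu).
\]

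Substitute \(\mu=\sinh\gamma\,N+\cosh\gamma\,\overline\nu\) and use \(\overline g(N,N)=-1\), \(\overline g(\overline\nu,\overline\nu)=1\) and \(\overline g(N,\overline\nu)=0\). These give \(\overline g(\overline\nabla_eN,N)=0\), \(\overline g(\overline\nabla_e\overline\nu,\overline\nu)=0\), and \(\overline g(\overline\nabla_e\overline\nu,N)=-\overline g(\overline\nu,\overline\nabla_eN)=-h^{\mathcal P}(e,\overline\nu)\). Hence
\[
h(e,\mu)=\cosh^2\gamma\,h^{\mathcal P}(e,\overline\nu)-\sinh^2\gamma\,h^{\mathcal P}(e,\overline\nu)=h^{\mathcal P}(e,\overline\nu).
\]
Total umbilicity of \(\mathcal P\) means \(h^{\mathcal P}=\lambda\,\overline g|_{T\mathcal P}\), so \(h^{\mathcal P}(e,\overline\nu)=\lambda\,\overline g(e,\overline\nu)=0\) because \(e\perp\overline\nu\). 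This proves \(h(e,\mu)=0\).

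For \eqref{eq:Dnu}, decompose \(\overline\nabla_\mu\nu\). Its \(\nu\)-component vanishes because \(\overline g(\nu,\nu)=-1\). Its tangential part is \(\sum_i h(\mu,e_i)e_i\), taken in an orthonormal frame consisting of \(\mu\) and tangent vectors of \(\partial\Sigma\). By the first part, only \(h(\mu,\mu)\mu\) survives.

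The only delicate point is sign bookkeeping in the Lorentzian frame: the hyperbolic identity \(\cosh^2\gamma-\sinh^2\gamma=1\) must emerge, rather than a cancellation to zero. I will recheck that step carefully using the stated orientation conventions. The constancy of \(\gamma\) is essential, since otherwise the term \(e(\gamma)\,\overline g(\sinh\gamma\,N+\cosh\gamma\,\overline\nu,\mu)\) would appear. The space-form hypothesis on \(\overline M\) is not needed for this step; only the umbilicity of \(\mathcal P\) and the constant angle are used.
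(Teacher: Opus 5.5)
Your proof is correct and is essentially the paper's argument. You expand \(\nu\) and \(\mu\) in the \((N,\overline\nu)\) frame and use the constancy of \(\gamma\), so that \(\cosh^2\gamma-\sinh^2\gamma=1\) reduces \(h(e,\mu)\) to \(h^{\mathcal P}(e,\overline\nu)\), which vanishes by umbilicity; your frame decomposition giving \(\overline\nabla_\mu\nu=h(\mu,\mu)\,\mu\) supplies the step the paper leaves implicit.
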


\begin{proof}
  For \(e\in T(\partial\Sigma)\), the total umbilicity of
  \(\mathcal P\) gives
  \[
  \begin{aligned}
    h(e,\mu)
    &= \overline{g}\bigl(\overline{\nabla}_e\nu,\mu\bigr) \\
    &= \overline{g}\bigl(\overline{\nabla}_e(\cosh\gamma N+\sinh\gamma\overline\nu),\;
                \sinh\gamma N+\cosh\gamma\overline\nu\bigr) \\
    &= \overline{g}\bigl(\overline{\nabla}_e N,\overline\nu\bigr)
       = h^{\mathcal{P}}(e,\overline\nu)=0.
  \end{aligned}
  \qedhere
  \]
\end{proof}

\begin{proposition}\label{prop:boundary-relations}
Let \(x:\Sigma^n\to \overline M^{n+1}\) be a spacelike immersion whose boundary
lies on a spacelike totally umbilical support \(\mathcal P\). Assume that
\[
h^{\mathcal P}=\kappa\,\overline g|_{T\mathcal P},
\qquad
h^{\mathcal P}(X,Y)=\overline g(\overline\nabla_XN,Y),
\]
and that along \(\partial\Sigma\)
\[
N=\cosh\gamma\,\nu-\sinh\gamma\,\mu,
\qquad
\gamma>0.
\]
Let \(\{e_\alpha\}_{\alpha=1}^{n-1}\) be a local orthonormal frame on
\(\partial\Sigma\). Define the unnormalized mean curvature of
\(\partial\Sigma\subset\mathcal P\) by
\[
\widehat H:=\sum_{\alpha=1}^{n-1}
\overline g(\overline\nabla_{e_\alpha}\overline\nu,e_\alpha),
\]
and define the second fundamental form of \(\partial\Sigma\subset\Sigma\) by
\[
\widetilde h(e_\alpha,e_\beta)
:=\overline g(\overline\nabla_{e_\alpha}\mu,e_\beta).
\]
Then the following identities hold along \(\partial\Sigma\):
  \begin{align}
    h(\mu,\mu) &= H - (n-1)\kappa\cosh\gamma - \sinh\gamma\,\widehat H, \label{eq:hmm} \\
    \widetilde h(e_\alpha,e_\beta) &= -\frac{1}{\sinh\gamma} h^{\mathcal{P}}(e_\alpha,e_\beta)
                                 + \coth\gamma\,h(e_\alpha,e_\beta) .
                                 \label{eq:tildeh}
  \end{align}
\end{proposition}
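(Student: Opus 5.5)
The plan is to express both identities through the boundary frame relations \eqref{eq:boundary-frame-inverse}. Along \(\partial\Sigma\) we have \(\nu=\cosh\gamma\,N+\sinh\gamma\,\overline\nu\) and \(\mu=\sinh\gamma\,N+\cosh\gamma\,\overline\nu\). We then differentiate these relations along tangent directions \(e_\alpha\in T(\partial\Sigma)\).

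The key observation is that \(e_\alpha\) is orthogonal to \(N\), \(\overline\nu\), \(\nu\) and \(\mu\). Consequently, any derivative of the coefficients \(\cosh\gamma\), \(\sinh\gamma\) in the direction \(e_\alpha\) multiplies a vector normal to \(\partial\Sigma\) and drops out after pairing with \(e_\beta\). So no constancy of \(\gamma\) is needed here. Using \(h^{\mathcal P}(e_\alpha,e_\beta)=\overline g(\overline\nabla_{e_\alpha}N,e_\beta)=\kappa\delta_{\alpha\beta}\), we obtain
\[
h(e_\alpha,e_\beta)=\cosh\gamma\,\kappa\,\delta_{\alpha\beta}+\sinh\gamma\,\overline g(\overline\nabla_{e_\alpha}\overline\nu,e_\beta),
\]
\[
\widetilde h(e_\alpha,e_\beta)=\sinh\gamma\,\kappa\,\delta_{\alpha\beta}+\cosh\gamma\,\overline g(\overline\nabla_{e_\alpha}\overline\nu,e_\beta).
\]

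For \eqref{eq:hmm}, complete \(\{e_\alpha\}\) by \(\mu\) to an orthonormal frame of \(T\Sigma\) along \(\partial\Sigma\). Then \(H=h(\mu,\mu)+\sum_\alpha h(e_\alpha,e_\alpha)\). Tracing the first display over \(\alpha\) gives
\[
\sum_\alpha h(e_\alpha,e_\alpha)=(n-1)\kappa\cosh\gamma+\sinh\gamma\,\widehat H,
\]
and \eqref{eq:hmm} follows.

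For \eqref{eq:tildeh}, use \(\gamma>0\), so \(\sinh\gamma\neq0\). Solve the first display for
\[
\overline g(\overline\nabla_{e_\alpha}\overline\nu,e_\beta)=\frac{h(e_\alpha,e_\beta)-\kappa\cosh\gamma\,\delta_{\alpha\beta}}{\sinh\gamma}
\]
and substitute into the second. This gives
\[
\widetilde h(e_\alpha,e_\beta)=\coth\gamma\,h(e_\alpha,e_\beta)+\kappa\,\delta_{\alpha\beta}\Bigl(\sinh\gamma-\frac{\cosh^2\gamma}{\sinh\gamma}\Bigr).
\]
The bracket equals \(-1/\sinh\gamma\), and \(\kappa\delta_{\alpha\beta}=h^{\mathcal P}(e_\alpha,e_\beta)\), which yields \eqref{eq:tildeh}.

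The only delicate point is the sign bookkeeping: Lorentzian inner products with \(\overline g(N,N)=-1\), together with the orientation conventions in \eqref{eq:boundary-frame}. I expect no real obstacle beyond verifying that the terms involving derivatives of \(\gamma\) indeed vanish by orthogonality.
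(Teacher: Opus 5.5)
Your proposal is correct and essentially matches the paper's proof. Both arguments differentiate the frame relations along $e_\alpha$, trace the $\nu$-relation to get \eqref{eq:hmm}, and eliminate $\overline g(\overline\nabla_{e_\alpha}\overline\nu,e_\beta)$ to get \eqref{eq:tildeh}; the only cosmetic difference is that the paper eliminates this term via $\overline\nu=\cosh\gamma\,\mu-\sinh\gamma\,\nu$ and solves for $\widetilde h$, whereas you reuse your first display, and your observation that derivatives of $\gamma$ drop out by orthogonality makes explicit a point the paper leaves implicit.
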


\begin{proof}
  By definition,
  \(H = h(\mu,\mu) + \sum_{\alpha=1}^{n-1} h(e_\alpha,e_\alpha)\).
  Using \(\nu=\cosh\gamma\,N+\sinh\gamma\,\overline\nu\) and
  \(\overline g(N,e_\alpha)=\overline g(\overline\nu,e_\alpha)=0\), we compute
  \begin{align*}
    \sum_{\alpha=1}^{n-1} h(e_\alpha,e_\alpha)
    &= \sum_{\alpha=1}^{n-1} \overline g(\overline\nabla_{e_\alpha}\nu, e_\alpha) \\
    &= \cosh\gamma \sum_{\alpha=1}^{n-1} \overline g(\overline\nabla_{e_\alpha} N, e_\alpha)
       + \sinh\gamma \sum_{\alpha=1}^{n-1} \overline g(\overline\nabla_{e_\alpha} \overline\nu, e_\alpha) \\
    &= \cosh\gamma \sum_{\alpha=1}^{n-1} h^{\mathcal P}(e_\alpha,e_\alpha) + \sinh\gamma\,\widehat H.
  \end{align*}
 Therefore,
  \[
    H = h(\mu,\mu) + (n-1)\kappa\cosh\gamma + \sinh\gamma\,\widehat{H},
  \]
  which is equivalent to \eqref{eq:hmm}.

  For the second identity, decompose \(\mu = \sinh\gamma N + \cosh\gamma \overline\nu\) and
  differentiate along \(e_\alpha\):
  \begin{align*}
    \widetilde h(e_\alpha,e_\beta)= \overline g(\overline\nabla_{e_\alpha}\mu, e_\beta) = \sinh\gamma\,h^{\mathcal{P}}(e_\alpha,e_\beta)
       + \cosh\gamma\,\overline g(\overline\nabla_{e_\alpha}\overline\nu, e_\beta).
  \end{align*}
  Now use the inverse relation \(\overline\nu = \cosh\gamma\mu - \sinh\gamma\nu\) to replace the
  last term:
  \begin{align*}
    \overline g(\overline\nabla_{e_\alpha}\overline\nu, e_\beta)
    &= \cosh\gamma\,\widetilde h(e_\alpha,e_\beta) - \sinh\gamma\,h(e_\alpha,e_\beta).
  \end{align*}
Therefore,
  \[
    \widetilde h = -\frac{1}{\sinh\gamma}h^{\mathcal{P}} + \coth\gamma\,h,
  \]
  which proves \eqref{eq:tildeh}.
\end{proof}

\section{The light-cone model and the unified Minkowski-type formula}
\label{sec:light_cone}

\subsection{The light-cone model and conformal Killing fields}
\label{subsec:light-cone}

Let
\[
(\mathbb V,\langle\cdot,\cdot\rangle_{\mathbb V})
=\mathbb R^{n+1,2}
\]
be the pseudo-Euclidean space of signature \((n+1,2)\).
In coordinates,
\[
    \langle \xi,\xi\rangle_{\mathbb V}
    =-(\xi^0)^2+\sum_{i=1}^{n+1}(\xi^i)^2-(\xi^{n+2})^2,
\]
for any \(\xi=(\xi^0,\xi^1,\cdots,\xi^{n+2})\in \mathbb{V}\).
Fix \(c\in\{1,0,-1\}\) and choose a constant vector \(E_c\in\mathbb V\) satisfying
\[
    \langle E_c,E_c\rangle_{\mathbb V}=-c.
\]
Set
\[
    \mathbb Q^{n+1}_1(c)
    =\left\{\xi\in\mathbb V\mid \langle\xi,\xi\rangle_{\mathbb V}=0,\,
   \langle \xi,E_c\rangle_{\mathbb V}=1\right\}.
\]
Equipped with the induced metric \(\overline g\), this submanifold is a
Lorentzian space form of constant sectional curvature \(c\). Indeed, let
\(D\) denote the flat connection of \(\mathbb V\) and
\(\overline\nabla\) the Levi-Civita connection of
\((\mathbb Q^{n+1}_1(c),\overline g)\). Then, for
\(U,V\in T\mathbb Q^{n+1}_1(c)\),
\[
    D_UV
    =\overline\nabla_UV-\overline g(U,V)(E_c+c\xi).
\]
Hence, the Gauss equation gives
\[
    \overline R(U,V)W
    =c\bigl(\overline g(V,W)U-\overline g(U,W)V\bigr).
\]
Thus, \(c=1,0,-1\) correspond to the de~Sitter, Minkowski and anti-de~Sitter cases, respectively.

For a constant vector \(A\in\mathbb{V}\), define
\begin{align}
    &s_A:=\langle A,\xi\rangle_{\mathbb{V}}, \quad
    m_A:=\langle A,E_c \rangle_{\mathbb{V}}, \label{eq:s_A}\\
    & \rho_A:=m_A+cs_A. \label{eq:rho_A}
\end{align}
Then \(d\rho_A=c\,ds_A\). 
Moreover, for \(X,Y\in T\mathbb Q^{n+1}_1(c)\),
\begin{align*}
\overline\nabla^2 s_A(X,Y)
&=X(Y(s_A))-(\overline\nabla_XY)(s_A)\\
&=X\langle A,Y\rangle_{\mathbb V}
  -\langle A,\overline\nabla_XY\rangle_{\mathbb V}\\
&=\langle A,D_XY-\overline\nabla_XY\rangle_{\mathbb V}\\
&=-\rho_A\,\overline g(X,Y).
\end{align*}
Thus,
\begin{equation}\label{eq:Hessians_A}
\overline\nabla^2s_A=-\rho_A\overline g.
\end{equation}
Define
\begin{equation}\label{def:general-XA}
    \mathcal{X}_A:=\overline{\nabla}s_A
    =A-\rho_A\xi-s_AE_c.
\end{equation}
\(\mathcal X_A\) is the orthogonal projection of \(A\) onto
\(T\mathbb{Q}_1^{n+1}(c)\). It is a closed conformal Killing field
satisfying
\begin{equation}\label{eq:general-connection-XA}
 \overline{\nabla}_Z\mathcal X_A=-\rho_AZ, \quad
 \text{for}\  Z\in T\mathbb{Q}_1^{n+1}(c).
\end{equation}

For another constant vector \(B\in\mathbb V\), define
\begin{equation}\label{def:general-K}
\mathcal K_{A,B}
:=
\rho_A\mathcal X_B-\rho_B\mathcal X_A.
\end{equation}
By \eqref{eq:general-connection-XA},
\begin{equation}\label{compKAB}
\overline\nabla_Z\mathcal K_{A,B}
=
c\bigl(
\langle Z,A \rangle_{\mathbb V} \mathcal X_B
-
\langle Z,B\rangle_{\mathbb V}\mathcal X_A
\bigr).
\end{equation}
The bilinear form
\((Z,W)\mapsto\overline g(\overline\nabla_Z\mathcal K_{A,B},W)\)
is therefore skew-symmetric. Hence, \(\mathcal K_{A,B}\) is a Killing
field on \(\mathbb Q^{n+1}_1(c)\).

We next describe the support hypersurfaces. Let \(A\in\mathbb V\) be a
constant timelike vector linearly independent of \(E_c\).
Set
\[
    \mathcal P_A:=
    \{\xi\in\mathbb Q^{n+1}_1(c)\mid s_A(\xi)=0\}.
\]
Along \(\mathcal{P}_A\), we have
\[
    \overline{g}(\overline{\nabla}s_A,\overline{\nabla}s_A)
    =\langle A,A \rangle_{\mathbb V}<0.
\]
Hence, \(\mathcal{P}_A\) is a spacelike hypersurface.
Choosing its unit timelike normal
\begin{equation}\label{eq:general-support-normal}
N:=-\frac{1}{\lambda_A}\mathcal{X}_A,
\quad 
\lambda_A:=\sqrt{-\langle A,A \rangle_{\mathbb V}},
\end{equation}

and applying \eqref{eq:general-connection-XA}, we obtain
\[
    \overline\nabla_ZN
    =-\frac{1}{\lambda_A}\overline\nabla_Z\mathcal X_A
    =\frac{\rho_A}{\lambda_A}Z.
\]
Since \(\rho_A=m_A+cs_A=m_A\) on \(\mathcal P_A\), the support
\(\mathcal P_A\) is totally umbilical, with constant principal curvature
\[
\kappa_A=\frac{m_A}{\lambda_A}.
\]
By the Gauss equation, its sectional curvature is
\[
    \operatorname{Sec}(\mathcal{P}_A)=c-\kappa_A^2=c-\frac{m_A^2}{\lambda_A^2}.
\]

For later use, we also introduce the auxiliary conformal Killing vector field
\[
    \mathfrak{C}_{A,B}:=s_A\mathcal{X}_B-s_B\mathcal{X}_A.
\]
Equations \eqref{eq:general-connection-XA} and \eqref{eq:rho_A} give
\[
\mathcal{L}_{\mathfrak{C}_{A,B}}\overline{g}
=2 (s_B\rho_A -s_A\rho_B )\overline{g}.
\]
Although \(\mathfrak C_{A,B}\) is not Killing in general, we define
\begin{equation}\label{def:general-T}
\mathcal T_{A,B}
:=
\langle A,B \rangle_{\mathbb V}\mathcal X_A
-\langle A,A \rangle_{\mathbb V} \mathcal X_B
+m_A\mathfrak{C}_{A,B}.
\end{equation}

On \(\mathcal P_A\), we have \(s_A=0\), and hence
\[
\begin{aligned}
\mathcal T_{A,B}
&=
\bigl(\langle A,B \rangle_{\mathbb V} -m_As_B\bigr)\mathcal X_A
-\langle A,A \rangle_{\mathbb V} \mathcal X_B \\
&=
\overline g(\mathcal X_A,\mathcal X_B)\mathcal X_A
+\lambda_A^2\mathcal X_B .
\end{aligned}
\]
Since \(N=-\lambda_A^{-1}\mathcal X_A\), this is equivalent to
\begin{equation}\label{eq:T-projection-support}
\mathcal T_{A,B}
=
\lambda_A^2(\mathcal X_B)^{\mathcal{P}_A}
\quad
\text{on }\mathcal P_{A},
\end{equation}
where \((\mathcal X_B)^{\mathcal{P}_A}\) denotes the orthogonal projection of
\(\mathcal X_B\) onto \(T\mathcal{P}_A\).  In particular,
\begin{equation}\label{orthogonal}
\overline g(\mathcal T_{A,B},N)=0
\quad
\text{on }\mathcal{P}_A.
\end{equation}
Thus, \(\mathcal{T}_{A,B}\) is tangent to \(\mathcal{P}_A\).

For \(u,v,r,s\in\mathbb V\), we use the induced bilinear form on
\(\Lambda^2\mathbb V\) defined by
\begin{equation}\label{eq:bivector-pairing}
\langle u\wedge v,r\wedge s\rangle_{\mathbb V}
:=\langle u,r\rangle_{\mathbb V}\langle v,s\rangle_{\mathbb V}
-\langle u,s\rangle_{\mathbb V}\langle v,r\rangle_{\mathbb V}.
\end{equation}

The field \(\mathcal T_{A,B}\) is a support-tangent conformal Killing
field on \(\mathbb Q_1^{n+1}(c)\) satisfying
\begin{equation}\label{eq:T-minus-tauK-conformal}
\mathcal{L}_{\mathcal{T}_{A,B}}\overline{g}=2\psi_{A,B}\overline{g},
\end{equation}
where the potential \(\psi_{A,B}\) is given by 
\begin{equation}\label{def:general-psi}
\begin{aligned}
\psi_{A,B}
& =
\langle A,A \rangle_{\mathbb V} \, \rho_B
-\langle A,B \rangle_{\mathbb V}\,\rho_A
+m_A(s_B\rho_A-s_A\rho_B) \\
& =\langle A\wedge B,(A-m_A\xi)\wedge(E_c+c\xi)\rangle_{\mathbb V}.
\end{aligned}
\end{equation}
It follows from \eqref{eq:Hessians_A} that
\begin{equation}\label{eq:Hessipsi_{AB}}
    \overline{\nabla}^2\psi_{A,B}=-c\psi_{A,B}\overline{g}.
\end{equation}

\subsection{The support two-plane}

The geometry of the support is encoded by
\[
   \Pi_A=\operatorname{span}\{A,E_c\}\subset \mathbb V.
\]
With respect to the ordered basis \((A,E_c)\), the Gram matrix of
\(\Pi_A\) is
\[
   G_A=
   \begin{pmatrix}
     -\lambda_A^2&m_A\\
     m_A&-c
   \end{pmatrix}.
\]
Hence,
\begin{equation}\label{eq:det-support-plane}
   \det G_A
   =c\lambda_A^2-m_A^2
   =\lambda_A^2\operatorname{Sec}(\mathcal P_A).
\end{equation}

\begin{proposition}\label{prop:test-signature}
When \(\Pi_A\) is nondegenerate, set
\(\mathbb E_A=\Pi_A^\perp\).
\begin{enumerate}[label=\textup{(\roman*)}]
\item If $\operatorname{Sec}(\mathcal P_A)>0$, then $\Pi_A$ is negative definite and $\mathbb E_A$ is positive definite of dimension $n+1$.
\item If $\operatorname{Sec}(\mathcal P_A)=0$, then $\Pi_A$ is degenerate.  This is the null degeneration occurring in the horospherical and flat-support cases.
\item If $\operatorname{Sec}(\mathcal P_A)<0$, then $\Pi_A$ has signature $(1,1)$ and $\mathbb E_A$ has signature $(n,1)$.
\end{enumerate}
\end{proposition}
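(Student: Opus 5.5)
The plan is to read off everything from the Gram matrix \(G_A\) together with the known signature \((n+1,2)\) of \(\mathbb V\), using Sylvester's law of inertia. First I will note that \(\Pi_A\) really is two-dimensional, since \(A\) and \(E_c\) are assumed linearly independent. By \eqref{eq:det-support-plane}, \(\det G_A=\lambda_A^2\operatorname{Sec}(\mathcal P_A)\) with \(\lambda_A^2>0\), so the sign of \(\det G_A\) equals the sign of \(\operatorname{Sec}(\mathcal P_A)\). A plane with Gram determinant zero is degenerate; this proves (ii) immediately. The identification with the horospherical and flat cases is just \(m_A^2=c\lambda_A^2\), i.e.\ \(\kappa_A^2=c\), which for \(c=1\) is the horospherical case and for \(c=0\) is \(\kappa_A=0\).

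For (i), suppose \(\det G_A>0\). Then the two eigenvalues of the symmetric \(2\times2\) matrix \(G_A\) have the same sign. The diagonal entry \(-\lambda_A^2\) is negative, so the restricted form takes a negative value on \(A\), and both eigenvalues must be negative. Hence \(\Pi_A\) is negative definite. For (iii), suppose \(\det G_A<0\). Then the eigenvalues have opposite signs, and \(\Pi_A\) has signature \((1,1)\). In both of these cases \(\Pi_A\) is nondegenerate, so \(\mathbb V=\Pi_A\oplus\mathbb E_A\) orthogonally, and the inertia indices add: \(\operatorname{sig}(\mathbb E_A)=(n+1,2)-\operatorname{sig}(\Pi_A)\). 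This gives \((n+1,0)\) in case (i), i.e.\ \(\mathbb E_A\) is positive definite of dimension \(n+1\), and \((n,1)\) in case (iii).

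No step is a real obstacle. The only points needing care are:
\begin{itemize}
\item invoking the standard fact that the orthogonal complement of a nondegenerate subspace is a complement, so that signatures are additive;
\item in case (i), checking the negativity of one diagonal entry, which is what distinguishes negative definite from positive definite.
\end{itemize}
The latter is also forced anyway, since \(\mathbb V\) has only two negative directions and a two-dimensional plane containing the timelike vector \(A\) cannot be positive definite.
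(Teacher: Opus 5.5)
Your proposal is correct and follows essentially the same route as the paper. The paper's one-line proof likewise derives everything from \(\det G_A=\lambda_A^2\operatorname{Sec}(\mathcal P_A)\), the negativity of \(\langle A,A\rangle_{\mathbb V}\), and the fact that \(\mathbb V\) has index two; you simply spell out the Sylvester-inertia and orthogonal-complement bookkeeping that the paper leaves implicit.
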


\begin{proof}
Since \(\mathbb V\) has index two, the assertions follow from
\eqref{eq:det-support-plane} and
\(\langle A,A\rangle_{\mathbb V}<0\).
\end{proof}

\begin{remark}
The ambient construction vanishes when
\(B\in\operatorname{span}\{A,E_c\}\). Indeed,
\[
\mathcal K_{A,A}=\mathcal T_{A,A}=0,
\qquad
\psi_{A,A}=0,
\]
and the same holds for \(B=E_c\).
Thus, the natural parameter space factors through
\[
\mathbb V/\operatorname{span}\{A,E_c\},
\]
which has dimension \(n+1\).
Proposition~\ref{prop:test-signature} explains why the finite trace is positive in the round case, degenerate in the horospherical case, and indefinite in the negative-curvature cases.
\end{remark}

\subsection{The unified Minkowski-type formula}
\label{subsec:Minkowski}

Let \(\Sigma^n\subset\mathbb Q^{n+1}_1(c)\) be a compact spacelike
capillary hypersurface supported on \(\mathcal P_A\), with constant
contact angle \(\gamma\); that is,
\[
\partial\Sigma\subset\mathcal P_{A},
\qquad
\overline g(\nu,N)=-\cosh\gamma.
\]
Define 
\begin{equation}\label{def:general-test-function}
\varphi_{A,B}
:=
n\psi_{A,B}
+ H\overline g(\mathcal T_{A,B},\nu)
- n\lambda_A\cosh\gamma\,
\overline g(\mathcal K_{A,B},\nu).
\end{equation}

\begin{theorem}[Unified Minkowski-type formula]\label{thm:Minkowski-general}
Under the above assumptions, for every constant vector \(B\in\mathbb V\),
\begin{equation}\label{eq:Minkowski-general}
\int_\Sigma \varphi_{A,B}\,d\Sigma = 0.
\end{equation}
\end{theorem}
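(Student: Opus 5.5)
Our approach is the standard Minkowski-formula argument: integrate tangential divergences of the two fields \(\mathcal T_{A,B}\) and \(\mathcal K_{A,B}\) over \(\Sigma\), then combine them so that the boundary terms cancel.

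\emph{Interior identities.} Let \(Z\) be any conformal Killing field on \(\mathbb Q^{n+1}_1(c)\) with \(\mathcal L_Z\overline g=2\psi\overline g\). Write \(Z=Z^\top-\overline g(Z,\nu)\nu\), using \(\overline g(\nu,\nu)=-1\). Then
\[
\operatorname{div}_\Sigma Z^\top=\operatorname{div}_\Sigma Z+\overline g(Z,\nu)\operatorname{div}_\Sigma\nu=n\psi+H\,\overline g(Z,\nu).
\]
Apply this to \(Z=\mathcal T_{A,B}\) with potential \(\psi_{A,B}\) from \eqref{eq:T-minus-tauK-conformal}. Apply it also to \(Z=\mathcal K_{A,B}\), which is Killing, so \(\psi=0\). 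The divergence theorem then gives
\[
\int_\Sigma\bigl(n\psi_{A,B}+H\overline g(\mathcal T_{A,B},\nu)\bigr)\,d\Sigma=\int_{\partial\Sigma}\overline g(\mathcal T_{A,B},\mu)\,dS,
\]
and, since \(H\) is constant,
\[
\int_\Sigma H\,\overline g(\mathcal K_{A,B},\nu)\,d\Sigma=\int_{\partial\Sigma}\overline g(\mathcal K_{A,B},\mu)\,dS.
\]
This second identity is not yet the one we need, because \(\varphi_{A,B}\) contains \(\overline g(\mathcal K_{A,B},\nu)\) without the factor \(H\). So we instead use the standard trick for the \(\nu\)-component of a Killing field: find a tangent vector field \(W\) on \(\Sigma\) with
\[
\operatorname{div}_\Sigma W=n\,\overline g(\mathcal K,\nu).
\]
In the Riemannian capillary literature one takes \(W\) built from the skew form of \(\overline\nabla\mathcal K\). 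Here I would instead exploit the explicit ambient formula \eqref{compKAB}. Since \(\overline\nabla\mathcal K_{A,B}=c\bigl(\langle\cdot,A\rangle\mathcal X_B-\langle\cdot,B\rangle\mathcal X_A\bigr)\), the candidate is
\[
W=\bigl(\rho_A\,\overline g(\mathcal X_B,\nu)-\rho_B\,\overline g(\mathcal X_A,\nu)\bigr)\nu-\text{correction},
\]
or more simply the tangential part of a field of the form \(s\)-weighted combinations such as \(\mathfrak C_{A,B}\). The guiding fact is that \(\overline g(\mathfrak C_{A,B},\nu)\) differentiates to produce \(\overline g(\mathcal K_{A,B},\cdot)\)-type terms, via \(d\rho=c\,ds\) and \eqref{eq:general-connection-XA}. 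Concretely, I expect \(\nabla\overline g(\mathfrak C_{A,B},\nu)\) and \(\operatorname{div}_\Sigma\) of a suitable combination to yield \(n\,\overline g(\mathcal K_{A,B},\nu)\) modulo terms that cancel using the Codazzi equation (\(\operatorname{div}h=dH=0\)). Getting the right combination is the main computational obstacle. The result should be a divergence whose boundary flux is \(\overline g(\,\cdot\,,\mu)\) of an explicit vector.

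\emph{Boundary cancellation.} This is the key step. Along \(\partial\Sigma\) we have \(\mathcal T_{A,B}=\lambda_A^2(\mathcal X_B)^{\mathcal P_A}\), which is tangent to \(\mathcal P_A\). Using \eqref{eq:boundary-frame-inverse},
\[
\overline g(\mathcal T_{A,B},\mu)=\cosh\gamma\,\overline g(\mathcal T_{A,B},\overline\nu).
\]
On \(\mathcal P_A\) we have \(s_A=0\), \(\rho_A=m_A\) and \(\mathcal X_A=-\lambda_AN\). Hence
\[
\mathcal K_{A,B}=m_A\mathcal X_B+\lambda_A\rho_BN,
\]
so its \(\overline\nu\)-component equals \(m_A\,\overline g(\mathcal X_B,\overline\nu)\), which is proportional to \(\overline g(\mathcal T,\overline\nu)\). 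The boundary flux of the \(\mathcal K\)-divergence, computed from \(W\) and expressed in the frame \((N,\overline\nu)\), should therefore be exactly \(\lambda_A^{-1}\) times \(\overline g(\mathcal T_{A,B},\overline\nu)\). The coefficient \(n\lambda_A\cosh\gamma\) in \eqref{def:general-test-function} is chosen precisely so that the two boundary integrals cancel. It remains to check that the normal-component contributions, those involving \(N\) and \(\rho_B\), vanish or cancel. This uses \(\overline g(\nu,N)=-\cosh\gamma\) being constant and Proposition~\ref{prop:mu-principal}, namely \(h(e,\mu)=0\). Subtracting the two integral identities then gives \eqref{eq:Minkowski-general}.
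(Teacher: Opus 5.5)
Your treatment of \(\mathcal T_{A,B}\) is correct and coincides with the paper's: \(\operatorname{div}_\Sigma\mathcal T_{A,B}^\top=n\psi_{A,B}+H\,\overline g(\mathcal T_{A,B},\nu)\), with boundary flux \(\cosh\gamma\,\overline g(\mathcal T_{A,B},\overline\nu)=\lambda_A^2\cosh\gamma\,\overline g(\mathcal X_B,\overline\nu)\). The gap is the step you flag as the main obstacle. You never produce a tangent field \(W\) with \(\operatorname{div}_\Sigma W=n\,\overline g(\mathcal K_{A,B},\nu)\), and that construction is the real content of the proof. Your written candidate cannot work: \(\bigl(\rho_A\overline g(\mathcal X_B,\nu)-\rho_B\overline g(\mathcal X_A,\nu)\bigr)\nu=\overline g(\mathcal K_{A,B},\nu)\,\nu\) is normal to \(\Sigma\). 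Your other suggestion, \(\mathfrak C_{A,B}^\top\), has divergence \(n(s_B\rho_A-s_A\rho_B)+H\,\overline g(\mathfrak C_{A,B},\nu)\), which is a different function. The missing idea is the \(\nu\)-dependent field
\[
\mathcal B_{A,B}=\overline g(\mathcal X_A,\nu)\,\mathcal X_B-\overline g(\mathcal X_B,\nu)\,\mathcal X_A ,
\]
the light-cone analogue of the Riemannian field \(\langle\nu,a\rangle x-\langle x,\nu\rangle a\). It is tangent to \(\Sigma\) because \(\overline g(\mathcal B_{A,B},\nu)=0\). From \(\overline\nabla_Z\mathcal X_A=-\rho_AZ\) one gets \(e_i\bigl(\overline g(\mathcal X_A,\nu)\bigr)=h(e_i,\mathcal X_A^\top)\). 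The second-fundamental-form contributions to the divergence are therefore \(h(\mathcal X_A^\top,\mathcal X_B^\top)-h(\mathcal X_B^\top,\mathcal X_A^\top)=0\). Hence \(\operatorname{div}_\Sigma\mathcal B_{A,B}=n\bigl(\rho_A\overline g(\mathcal X_B,\nu)-\rho_B\overline g(\mathcal X_A,\nu)\bigr)=n\,\overline g(\mathcal K_{A,B},\nu)\) pointwise. Neither the Codazzi equation nor constancy of \(H\) is involved.

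Your boundary paragraph also contains an unjustified step. The flux of the required field is not controlled by \(\overline g(\mathcal K_{A,B},\overline\nu)=m_A\,\overline g(\mathcal X_B,\overline\nu)\). That quantity carries the coefficient \(m_A\) instead of \(\lambda_A\), and it vanishes identically for the Minkowski hyperplane (\(m_A=0\)), where the needed flux does not. The correct computation uses \(\nu\wedge\mu=N\wedge\overline\nu\), \(\mathcal X_A=-\lambda_AN\), and \(\overline g(\mathcal X_A,\overline\nu)=0\) on \(\partial\Sigma\):
\[
\overline g(\mathcal B_{A,B},\mu)=\overline g(\mathcal X_A\wedge\mathcal X_B,\nu\wedge\mu)=\overline g(\mathcal X_A\wedge\mathcal X_B,N\wedge\overline\nu)=\lambda_A\,\overline g(\mathcal X_B,\overline\nu).
\]
This equals \(\lambda_A^{-1}\overline g(\mathcal T_{A,B},\overline\nu)\), so the value you anticipated is right, but your argument does not establish it. With both flux identities in hand,
\[
\int_\Sigma\varphi_{A,B}\,d\Sigma=\lambda_A^2\cosh\gamma\int_{\partial\Sigma}\overline g(\mathcal X_B,\overline\nu)\,dS-\lambda_A\cosh\gamma\cdot\lambda_A\int_{\partial\Sigma}\overline g(\mathcal X_B,\overline\nu)\,dS=0 .
\]
Neither Proposition~\ref{prop:mu-principal} nor any extra cancellation of \(N\)- or \(\rho_B\)-terms is needed.
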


\begin{proof}

Let
\[
\mathcal T_{A,B}^{\top}
:=
\mathcal T_{A,B}
+
\overline g(\mathcal T_{A,B},\nu)\nu .
\]
Equation \eqref{eq:T-minus-tauK-conformal} gives
\[
\begin{aligned}
\operatorname{div}_{\Sigma}\mathcal T_{A,B}^{\top}
&=
\sum_{i=1}^n
\overline g(\overline\nabla_{e_i}\mathcal T_{A,B},e_i)
+
H\overline g(\mathcal T_{A,B},\nu) \\
&=
n\psi_{A,B}
+
H\overline g(\mathcal T_{A,B},\nu).
\end{aligned}
\]
On the boundary, \eqref{eq:T-projection-support} and \eqref{orthogonal} give
\[
\begin{aligned}
\overline g(\mathcal T_{A,B}^{\top},\mu)
=
\overline g(\mathcal T_{A,B},\mu) =
\cosh\gamma\,
\overline g(\mathcal T_{A,B},\overline\nu) =
\lambda_A^2\cosh\gamma\,
\overline g(\mathcal X_B,\overline\nu).
\end{aligned}
\]
Therefore, the divergence theorem gives
\begin{equation}\label{eq:Minkowski1-general}
\int_\Sigma
\left(
n\psi_{A,B}
+
H\overline g(\mathcal T_{A,B},\nu)
\right)d\Sigma
=
\lambda_A^2\cosh\gamma
\int_{\partial\Sigma}
\overline g(\mathcal X_B,\overline\nu)\,dS .
\end{equation}

Next, define
\[
\mathcal B_{A,B}
:=
\overline g(\mathcal X_A,\nu)\mathcal X_B
-
\overline g(\mathcal X_B,\nu)\mathcal X_A .
\]
Since \(\overline g(\mathcal B_{A,B},\nu)=0\), the field
\(\mathcal B_{A,B}\) is tangent to \(\Sigma\). Equation
\eqref{eq:general-connection-XA} then gives
\[
\begin{aligned}
\operatorname{div}_{\Sigma}\mathcal B_{A,B}
=
n\left(
-\rho_B\overline g(\mathcal X_A,\nu)
+
\rho_A\overline g(\mathcal X_B,\nu)
\right) =
n\overline g(\mathcal K_{A,B},\nu).
\end{aligned}
\]
For the boundary term, since
\[
\nu\wedge\mu=N\wedge\overline\nu,
\]
we have
\[
\begin{aligned}
\overline g(\mathcal B_{A,B},\mu)
&=
\overline g(\mathcal X_A\wedge\mathcal X_B,\nu\wedge\mu) \\
&=
\overline g(\mathcal X_A\wedge\mathcal X_B,N\wedge\overline\nu) \\
&=
\overline g(\mathcal X_A,N)\overline g(\mathcal X_B,\overline\nu)
-
\overline g(\mathcal X_A,\overline\nu)\overline g(\mathcal X_B,N) \\
&=
\lambda_A\overline g(\mathcal X_B,\overline\nu),
\end{aligned}
\]
where we used \(\mathcal X_A=-\lambda_AN\) and
\(\overline g(\mathcal X_A,\overline\nu)=0\) on \(\partial\Sigma\).  Applying the
divergence theorem gives
\begin{equation}\label{eq:Minkowski2-general}
\int_\Sigma
n\overline g(\mathcal K_{A,B},\nu)\,d\Sigma
=
\lambda_A
\int_{\partial\Sigma}
\overline g(\mathcal X_B,\overline\nu)\,dS .
\end{equation}

Combining \eqref{eq:Minkowski1-general} and
\eqref{eq:Minkowski2-general} proves
\eqref{eq:Minkowski-general}.
\end{proof}

\begin{lemma}\label{lem:Killing-flux-general}
For the Killing field \(\mathcal{K}_{A,B}\), one has 
\[
\int_\Sigma
H\overline g(\mathcal K_{A,B},\nu)\,d\Sigma
=
\cosh\gamma
\int_{\partial\Sigma}
\overline g(\mathcal K_{A,B},\overline\nu)\,dS
+
\frac{\sinh\gamma}{\lambda_A}
\int_{\partial\Sigma}
\psi_{A,B}\,dS.
\]
\end{lemma}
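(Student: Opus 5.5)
This is the standard Killing-field flux identity, and I would derive it from the first variation of area under the ambient isometric flow generated by \(\mathcal K_{A,B}\). Set \(u:=\overline g(\mathcal K_{A,B},\nu)\) and \(\mathcal K^{\top}:=\mathcal K_{A,B}+u\,\nu\). Since \(\mathcal K_{A,B}\) is Killing, \(\sum_i\overline g(\overline\nabla_{e_i}\mathcal K_{A,B},e_i)=0\). As in the proof of Theorem~\ref{thm:Minkowski-general}, this gives
\[
\operatorname{div}_\Sigma\mathcal K^{\top}=H\,u.
\]
Integrating over \(\Sigma\), with \(\overline g(\mathcal K^\top,\mu)=\overline g(\mathcal K_{A,B},\mu)\), yields
\[
\int_\Sigma H u\,d\Sigma=\int_{\partial\Sigma}\overline g(\mathcal K_{A,B},\mu)\,dS.
\]
By \eqref{eq:boundary-frame-inverse}, the boundary integrand splits as
\[
\overline g(\mathcal K_{A,B},\mu)=\sinh\gamma\,\overline g(\mathcal K_{A,B},N)+\cosh\gamma\,\overline g(\mathcal K_{A,B},\overline\nu).
\]
The second term is exactly the first boundary term in the claim. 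It therefore remains to show
\[
\lambda_A\,\overline g(\mathcal K_{A,B},N)=\psi_{A,B}\quad\text{on }\mathcal P_A.
\]

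This last identity is pointwise on the support and follows from the definitions. On \(\mathcal P_A\) we have \(s_A=0\) and \(\rho_A=m_A\), and by \eqref{eq:general-support-normal} \(N=-\lambda_A^{-1}\mathcal X_A\). Hence
\[
\lambda_A\overline g(\mathcal K_{A,B},N)=-\overline g(\rho_A\mathcal X_B-\rho_B\mathcal X_A,\mathcal X_A)
=-m_A\overline g(\mathcal X_A,\mathcal X_B)+\rho_B\overline g(\mathcal X_A,\mathcal X_A).
\]
Since \(\mathcal X_A=\overline\nabla s_A\) is the tangential projection of \(A\), we get \(\overline g(\mathcal X_A,\mathcal X_B)=\langle A,B\rangle_{\mathbb V}-\rho_As_B-\rho_Bs_A-cs_As_B\) plus the \(m\)-terms from \(E_c\). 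It is cleaner to expand directly using \(\mathcal X_A=A-\rho_A\xi-s_AE_c\), \(\langle\xi,\xi\rangle=0\), \(\langle\xi,E_c\rangle=1\) and \(\langle E_c,E_c\rangle=-c\), which gives
\[
\overline g(\mathcal X_A,\mathcal X_B)=\langle A,B\rangle_{\mathbb V}-\rho_As_B-\rho_Bs_A-m_As_B-m_Bs_A+\rho_As_B+\rho_Bs_A-cs_As_B.
\]
Evaluating at \(s_A=0\) gives \(\langle A,B\rangle_{\mathbb V}-m_As_B\). Likewise \(\overline g(\mathcal X_A,\mathcal X_A)=\langle A,A\rangle_{\mathbb V}\) there. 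Note that the earlier simplification of \(\mathcal T_{A,B}\) on \(\mathcal P_A\) already used \(\overline g(\mathcal X_A,\mathcal X_B)=\langle A,B\rangle_{\mathbb V}-m_As_B\), so this can be quoted. Therefore, on \(\mathcal P_A\),
\[
\lambda_A\overline g(\mathcal K_{A,B},N)=\langle A,A\rangle_{\mathbb V}\rho_B-m_A\langle A,B\rangle_{\mathbb V}+m_A^2s_B.
\]
Comparing with \eqref{def:general-psi} at \(s_A=0\), \(\rho_A=m_A\) gives \(\psi_{A,B}=\langle A,A\rangle_{\mathbb V}\rho_B-\langle A,B\rangle_{\mathbb V}m_A+m_A^2s_B\). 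The two expressions agree.

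Substituting \(\overline g(\mathcal K_{A,B},N)=\psi_{A,B}/\lambda_A\) into the split boundary integral gives the lemma. The only delicate point is sign bookkeeping: the orientation of \(\mu\) relative to \(N\) in \eqref{eq:boundary-frame-inverse}, the sign of \(N\) in \eqref{eq:general-support-normal}, and the convention \(\operatorname{div}_\Sigma\nu=H\) in the tangential decomposition. These are fixed consistently with the proof of Theorem~\ref{thm:Minkowski-general}, so I expect no genuine obstacle.
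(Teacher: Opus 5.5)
Your proposal is correct and follows the paper's proof. Both arguments apply the divergence theorem to the tangential part $\mathcal K_{A,B}^{\top}$, split $\mu=\sinh\gamma\,N+\cosh\gamma\,\overline\nu$ on $\partial\Sigma$, and use the normal-component identity $\overline g(\mathcal K_{A,B},N)=\psi_{A,B}/\lambda_A$ on $\mathcal P_A$. The only difference is that you verify this identity explicitly from $N=-\lambda_A^{-1}\mathcal X_A$ and $\overline g(\mathcal X_A,\mathcal X_B)=\langle A,B\rangle_{\mathbb V}-m_As_B$, whereas the paper just asserts it. Your computation is right, but you should delete the first, abandoned formula for $\overline g(\mathcal X_A,\mathcal X_B)$.
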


\begin{proof}
Consider the tangential projection
\[
\mathcal K_{A,B}^{\top}
=
\mathcal K_{A,B}
+
\overline g(\mathcal K_{A,B},\nu)\nu .
\]
Since \(\mathcal K_{A,B}\) is Killing, we have 
\[
\operatorname{div}_{\Sigma}\mathcal K_{A,B}^{\top}
=
H\overline g(\mathcal K_{A,B},\nu).
\]
Along \(\partial\Sigma\),  using \(\mu=\sinh\gamma\, N+\cosh\gamma\, \overline\nu\) and
\begin{equation}\label{eq:K-normal-component}
\overline g(\mathcal K_{A,B},N)
=
\frac{1}{\lambda_A}\psi_{A,B},
\end{equation}
we obtain
\[
\overline g(\mathcal K_{A,B}^{\top},\mu)
=
\frac{\sinh\gamma}{\lambda_A}\psi_{A,B}
+
\cosh\gamma\,\overline g(\mathcal K_{A,B},\overline\nu).
\]

The divergence theorem now gives the stated identity.
\end{proof}

\begin{lemma}\label{lem:test-function-boundary-value}
Under the assumptions of Theorem~\ref{thm:Minkowski-general}, the test
function \(\varphi_{A,B}\) has the following boundary value:
\begin{equation}\label{eq:general-varphi-boundary}
\varphi_{A,B}\big|_{\partial\Sigma}
= \sinh\gamma
\Bigl(
H\lambda_A^2\,\overline g(\mathcal X_B,\overline\nu)
-
n\lambda_A\cosh\gamma\,
\overline g(\mathcal K_{A,B},\overline\nu)
-
n\sinh\gamma\,\psi_{A,B}
\Bigr).
\end{equation}
\end{lemma}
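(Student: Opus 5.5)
The plan is to evaluate each of the three terms in \eqref{def:general-test-function} pointwise on \(\partial\Sigma\). In each term we replace the normal \(\nu\) by its decomposition \(\nu=\cosh\gamma\,N+\sinh\gamma\,\overline\nu\) from \eqref{eq:boundary-frame-inverse}, and then use the support identities \eqref{eq:T-projection-support}, \eqref{orthogonal} and \eqref{eq:K-normal-component}. No integration is involved; the statement is a purely algebraic boundary identity.

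\emph{First, the \(\mathcal T\)-term.} Since \(\partial\Sigma\subset\mathcal P_A\), \eqref{orthogonal} gives \(\overline g(\mathcal T_{A,B},N)=0\). Hence
\[
\overline g(\mathcal T_{A,B},\nu)=\sinh\gamma\,\overline g(\mathcal T_{A,B},\overline\nu).
\]
Because \(\overline\nu\) is tangent to \(\mathcal P_A\), \eqref{eq:T-projection-support} gives \(\overline g(\mathcal T_{A,B},\overline\nu)=\lambda_A^2\,\overline g\bigl((\mathcal X_B)^{\mathcal P_A},\overline\nu\bigr)\). The tangential projection may then be dropped, which yields
\[
\overline g(\mathcal T_{A,B},\nu)=\lambda_A^2\sinh\gamma\,\overline g(\mathcal X_B,\overline\nu).
\]

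\emph{Second, the \(\mathcal K\)-term.} The same decomposition gives
\[
\overline g(\mathcal K_{A,B},\nu)=\cosh\gamma\,\overline g(\mathcal K_{A,B},N)+\sinh\gamma\,\overline g(\mathcal K_{A,B},\overline\nu),
\]
and I would invoke \eqref{eq:K-normal-component}, \(\overline g(\mathcal K_{A,B},N)=\psi_{A,B}/\lambda_A\). Since that identity was only asserted in the proof of Lemma~\ref{lem:Killing-flux-general}, I would verify it directly.
\begin{enumerate}[label=\textup{(\alph*)}]
\item On \(\mathcal P_A\) we have \(s_A=0\) and \(\rho_A=m_A\). From \eqref{def:general-XA} this gives \(\overline g(\mathcal X_A,\mathcal X_A)=\langle A,A\rangle_{\mathbb V}\) and \(\overline g(\mathcal X_A,\mathcal X_B)=\langle A,B\rangle_{\mathbb V}-m_As_B\).
\item With \(N=-\lambda_A^{-1}\mathcal X_A\), this gives
\[
\overline g(\mathcal K_{A,B},N)=-\lambda_A^{-1}\bigl(m_A(\langle A,B\rangle_{\mathbb V}-m_As_B)-\rho_B\langle A,A\rangle_{\mathbb V}\bigr).
\]
\item On \(\mathcal P_A\), \eqref{def:general-psi} reduces to \(\psi_{A,B}=\langle A,A\rangle_{\mathbb V}\rho_B-m_A\langle A,B\rangle_{\mathbb V}+m_A^2s_B\), which matches the bracket in (b) and confirms \eqref{eq:K-normal-component}.
\end{enumerate}

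\emph{Finally, assemble.} Substituting both computations into \eqref{def:general-test-function} gives
\[
\varphi_{A,B}=n\psi_{A,B}+H\lambda_A^2\sinh\gamma\,\overline g(\mathcal X_B,\overline\nu)-n\cosh^2\gamma\,\psi_{A,B}-n\lambda_A\cosh\gamma\sinh\gamma\,\overline g(\mathcal K_{A,B},\overline\nu).
\]
The two \(\psi\)-terms combine via \(1-\cosh^2\gamma=-\sinh^2\gamma\). Factoring out \(\sinh\gamma\) then gives exactly \eqref{eq:general-varphi-boundary}.

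The only step with real content is the normal-component identity \eqref{eq:K-normal-component}, because it requires evaluating \(\psi_{A,B}\) on the support and matching it with the inner products of the \(\mathcal X\)'s. The rest is bookkeeping with the boundary frame. The main risk is sign errors, from the Lorentzian pairing and from the orientation convention \(N=-\lambda_A^{-1}\mathcal X_A\).
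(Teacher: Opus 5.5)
Your proposal is correct and follows essentially the same route as the paper: substitute \(\nu=\cosh\gamma\,N+\sinh\gamma\,\overline\nu\) into \eqref{def:general-test-function}, then use \eqref{orthogonal} and \eqref{eq:T-projection-support} for the \(\mathcal T\)-term and \eqref{eq:K-normal-component} for the \(\mathcal K\)-term, and finally combine the \(\psi\)-terms via \(1-\cosh^2\gamma=-\sinh^2\gamma\). Your direct verification of \eqref{eq:K-normal-component} on \(\mathcal P_A\), which the paper only asserts, is also correct.
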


\begin{proof}

Substituting the boundary identities
\[
\overline g(\mathcal T_{A,B},\nu)
= \lambda_A^2\sinh\gamma\,
\overline g(\mathcal X_B,\overline\nu),
\]
and 
\[
\overline g(\mathcal K_{A,B},\nu)
= \frac{\cosh\gamma}{\lambda_A}\psi_{A,B}
+ \sinh\gamma\,\overline g(\mathcal K_{A,B},\overline\nu)
\]
from \eqref{eq:T-projection-support} and \eqref{eq:K-normal-component}, respectively,
into \eqref{def:general-test-function} yields
the desired identity.
\end{proof}

Combining this identity with \eqref{eq:Minkowski2-general} and
Lemma~\ref{lem:Killing-flux-general}, and using that \(H\) is constant,
yields the following boundary formula.

\begin{proposition}\label{prop:Minkowski-boundary-general}
Under the same assumptions as in Theorem~\ref{thm:Minkowski-general},
\begin{equation}\label{eq:general-test-zero-boundary}
\int_{\partial\Sigma}\varphi_{A,B}\,dS=0.
\end{equation}
\end{proposition}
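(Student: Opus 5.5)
We integrate the boundary formula of Lemma~\ref{lem:test-function-boundary-value} over \(\partial\Sigma\) and show that the three resulting boundary integrals cancel, using only the two flux identities already established. Since \(H\) and \(\gamma\) are constant, integrating \eqref{eq:general-varphi-boundary} gives
\[
\frac{1}{\sinh\gamma}\int_{\partial\Sigma}\varphi_{A,B}\,dS
= H\lambda_A^2\,I_1 - n\lambda_A\cosh\gamma\,I_2 - n\sinh\gamma\,I_3,
\]
where
\[
I_1=\int_{\partial\Sigma}\overline g(\mathcal X_B,\overline\nu)\,dS,\qquad
I_2=\int_{\partial\Sigma}\overline g(\mathcal K_{A,B},\overline\nu)\,dS,\qquad
I_3=\int_{\partial\Sigma}\psi_{A,B}\,dS.
\]
Since \(\sinh\gamma>0\), it suffices to show that this combination vanishes.

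The key step is to express \(I_1\) through an interior integral and then feed it into the Killing flux identity. First, \eqref{eq:Minkowski2-general} gives
\[
\lambda_A I_1 = n\int_\Sigma \overline g(\mathcal K_{A,B},\nu)\,d\Sigma .
\]
Multiplying by the constant \(H\) turns this into
\[
H\lambda_A I_1 = n\int_\Sigma H\,\overline g(\mathcal K_{A,B},\nu)\,d\Sigma .
\]
Lemma~\ref{lem:Killing-flux-general} then rewrites the right-hand side as
\[
H\lambda_A I_1 = n\cosh\gamma\, I_2 + \frac{n\sinh\gamma}{\lambda_A}\, I_3 .
\]
Multiplying through by \(\lambda_A\) yields
\[
H\lambda_A^2 I_1 = n\lambda_A\cosh\gamma\, I_2 + n\sinh\gamma\, I_3,
\]
which is exactly the vanishing of the combination above. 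This proves \eqref{eq:general-test-zero-boundary}.

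The only place that needs care is the bookkeeping of signs and factors of \(\lambda_A\). I would recheck the boundary identity \eqref{eq:K-normal-component}, \(\overline g(\mathcal K_{A,B},N)=\psi_{A,B}/\lambda_A\). On \(\mathcal P_A\) we have \(s_A=0\), and this quantity equals \(-\lambda_A^{-1}\overline g(\rho_A\mathcal X_B-\rho_B\mathcal X_A,\mathcal X_A)\), evaluated with \(\rho_A=m_A\) and \(\overline g(\mathcal X_A,\mathcal X_B)=\langle A,B\rangle_{\mathbb V}-m_As_B\).

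I would also recheck the sinh/cosh coefficients in \eqref{eq:general-varphi-boundary}. These come from \(\nu=\cosh\gamma\,N+\sinh\gamma\,\overline\nu\) together with \eqref{eq:T-projection-support}.

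Constancy of \(H\), guaranteed by the capillary condition, is essential: it is what lets \(H\) be pulled out of the boundary integral \(I_1\) and into the interior integral.
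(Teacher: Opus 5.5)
Your proof is correct and is essentially the paper's argument: integrate the boundary expression \eqref{eq:general-varphi-boundary}, multiply \eqref{eq:Minkowski2-general} by the constant $H$, and substitute Lemma~\ref{lem:Killing-flux-general}; the three boundary integrals then cancel exactly as you computed. The checks you flag also go through: the identity $\overline g(\mathcal K_{A,B},N)=\psi_{A,B}/\lambda_A$ on $\mathcal P_A$ holds, and the $\sinh\gamma$ and $\cosh\gamma$ coefficients in the boundary value of $\varphi_{A,B}$ are right.
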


The conformal interpretation of the construction and its intrinsic
extension are discussed in Subsection~\ref{subsec:conformal-interpretation}.

\subsection{Model supports in de~Sitter and Minkowski spaces}

We begin with a model-reduction lemma showing that every spacelike
totally umbilical hypersurface in \(\mathbb Q^{n+1}_1(c)\) is contained
in a hypersurface \(\mathcal P_A\) of the form defined above.

\begin{lemma}\label{lem:classification_umbilical}
Let \(\mathcal P^n\subset \mathbb Q^{n+1}_1(c)\), \(n\geq 2\), be a connected spacelike totally umbilical hypersurface, with unit timelike normal \(N\) and 
\begin{equation}\label{eq:Weingarten}
\overline{\nabla}_Z N=\kappa Z, \qquad Z\in T\mathcal P.
\end{equation}
Then \(\kappa\) is constant and there exists a constant timelike vector \(A\in \mathbb V\) such that
\[\mathcal P\subset \mathcal P_A, \qquad N=-\lambda_A^{-1}\mathcal X_A, \qquad \kappa=\frac{m_A}{\lambda_A}.\]
\end{lemma}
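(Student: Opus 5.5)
We will follow the classical ``constant position-vector'' argument, carried out in the light-cone model. Regard \(\mathcal P\) as a submanifold of \(\mathbb V\) through the inclusion \(\xi\colon\mathcal P\to\mathbb Q^{n+1}_1(c)\subset\mathbb V\).

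\textbf{Step 1: \(\kappa\) is constant.} We apply the Codazzi equation for \(\mathcal P\subset\mathbb Q^{n+1}_1(c)\). Since the ambient curvature is constant, \(\overline R(X,Y)N\) has no tangential component. Total umbilicity therefore gives
\[
X(\kappa)Y-Y(\kappa)X=0
\]
for all \(X,Y\in T\mathcal P\). Because \(n\geq2\), we may choose \(X,Y\) linearly independent, which forces \(d\kappa=0\). Connectedness then shows that \(\kappa\) is constant.

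\textbf{Step 2: build a candidate constant vector.} Using the relation
\[
D_UV=\overline\nabla_UV-\overline g(U,V)(E_c+c\xi),
\]
applied with \(V=N\) and \(\overline g(Z,N)=0\) for \(Z\in T\mathcal P\), we get
\[
D_ZN=\overline\nabla_ZN=\kappa Z .
\]
We also have \(D_Z\xi=Z\) and \(D_ZE_c=0\). The natural candidate is
\[
A:=-N+\kappa\,\xi+\beta E_c,
\]
where \(\beta\) is a constant to be chosen. Then \(D_ZA=-\kappa Z+\kappa Z=0\), so \(A\) is constant along the connected \(\mathcal P\).

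We first take \(\beta=0\) and check the required properties:
\begin{itemize}
\item \(\langle A,A\rangle_{\mathbb V}=\langle N,N\rangle_{\mathbb V}=-1\), since \(\xi\) is null and \(N\perp\xi\). The last orthogonality holds because \(\langle\xi,\cdot\rangle_{\mathbb V}\) vanishes on \(T\mathbb Q\): differentiating \(\langle\xi,\xi\rangle=0\) gives \(\langle\xi,U\rangle=0\) for tangent \(U\).
\item \(N\perp E_c\), because \(\langle\xi,E_c\rangle=1\) is constant, so tangent vectors are orthogonal to \(E_c\).
\item \(s_A(\xi)=\langle A,\xi\rangle_{\mathbb V}=0\) on \(\mathcal P\), since \(\langle N,\xi\rangle=0\) and \(\langle\xi,\xi\rangle=0\). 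Hence \(\mathcal P\subset\mathcal P_A\).
\item \(\lambda_A=1\) and \(m_A=\langle A,E_c\rangle_{\mathbb V}=\kappa\), so \(\kappa=m_A/\lambda_A\).
\item \(\mathcal X_A=A-\rho_A\xi-s_AE_c\), with \(\rho_A=m_A\) on \(\mathcal P\). Thus \(\mathcal X_A=-N+\kappa\xi-\kappa\xi=-N\), which gives \(N=-\lambda_A^{-1}\mathcal X_A\).
\end{itemize}

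\textbf{Step 3: linear independence of \(A\) and \(E_c\).} The definition of \(\mathcal P_A\) requires \(A\notin\operatorname{span}\{E_c\}\). Suppose instead that \(A=tE_c\). Then \(s_A=t\langle E_c,\xi\rangle=t\), and \(s_A=0\) on \(\mathcal P\) forces \(t=0\). But \(\langle A,A\rangle_{\mathbb V}=-1\neq0\), a contradiction, so \(A\) and \(E_c\) are independent.

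The only delicate point is the bookkeeping of the normal components in the light-cone model. Specifically, we must confirm that \(N\perp\xi\) and \(N\perp E_c\) in \(\mathbb V\), and that the normal correction term in the formula for \(D\) vanishes when differentiating \(N\). Everything else is a direct check, and the free constant \(\beta\) turns out to be unnecessary.
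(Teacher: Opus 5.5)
Your proof is correct and follows the paper's argument essentially step for step: Codazzi with $n\geq 2$ gives $d\kappa=0$, and the constant vector $A=-N+\kappa\xi$ (your choice $\beta=0$) yields $\langle A,A\rangle_{\mathbb V}=-1$, $s_A=0$, $m_A=\kappa$ and $\mathcal X_A=-N$. Your additional checks fill in details the paper leaves implicit: $N\perp\xi$, $N\perp E_c$, $D_ZN=\overline\nabla_ZN$, and the linear independence of $A$ and $E_c$ that the definition of $\mathcal P_A$ requires.
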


\begin{proof}
Using \eqref{eq:Weingarten} and the Codazzi equation, we obtain
\[
\overline{R}(Z,W)N=Z(\kappa)W-W(\kappa)Z,
\]
for \(Z,W\in T\mathcal P\). Since \(\mathbb Q^{n+1}_1(c)\) has
constant sectional curvature \(c\), the left-hand side vanishes; hence,
\[
Z(\kappa)W-W(\kappa)Z=0.
\]
Since \(n\geq2\), for every nonzero \(Z\in T\mathcal P\) we may choose
\(W\in T\mathcal P\) linearly independent of \(Z\). It follows that
\(Z(\kappa)=0\). Thus, \(d\kappa=0\), and the connectedness of
\(\mathcal P\) implies that \(\kappa\) is constant.

Along \(\mathcal P\) we define
\[
A:=-N+\kappa\xi.
\]
For \(Z\in T\mathcal P\),
\[
D_Z A=-D_ZN+\kappa D_Z\xi=-\kappa Z+\kappa Z=0.
\]
Thus, \(A\) is constant. Moreover,
\[
\langle A, A\rangle_{\mathbb V}=-1, \qquad \langle A,\xi\rangle_{\mathbb V}=0,\qquad m_A=\langle A,E_c\rangle_{\mathbb V}=\kappa,
\] and therefore,
\[
\mathcal X_A=A-m_A\xi=-N.
\]
Using the convention in the light-cone model,
\[
\lambda_A:=\sqrt{-\langle A,A\rangle_{\mathbb V}}=1,
\qquad
s_A:=\langle A,\xi\rangle_{\mathbb V}=0.
\]
Consequently,
\[
N=-\mathcal X_A,
\qquad
\kappa=m_A,
\qquad
\mathcal P\subset\mathcal P_A.
\]
This completes the proof.
\end{proof}

We now specialize the light-cone construction to the de~Sitter and
Minkowski models appearing in Theorem~\ref{thm:intro-criterion}.
In the de~Sitter and Minkowski models,
\(\langle\cdot,\cdot\rangle\) denotes the corresponding ambient
pseudo-Euclidean inner product; the subscript \(\mathbb V\) is retained for
the light-cone ambient space.

\subsubsection{De~Sitter sections}
\label{sec:de-sitter}
Choose \(E=E_1\in\mathbb V\) with
\[
\langle E,E \rangle_{\mathbb V}=-1.
\]
Then \(E^\perp\) is naturally identified with
\((\mathbb R^{n+1,1}, \langle \cdot , \cdot \rangle)\).
The map 
\[x\mapsto \xi=x-E\]
identifies
\[
\mathbb S^{n+1}_1
=
\{x\in E^\perp: \langle x,x\rangle=1\}
\]
with \(\mathbb Q^{n+1}_1(1)\).

Let \(a,b\in E^\perp\) and \(\tau\in\mathbb R\), and set
\[
A=a-\tau E,
\qquad
B=b.
\]
Assume that
\[
\tau^2>\langle a,a\rangle,
\]
so that \(A\) is timelike. 
Then
\[s_A=\langle x,a\rangle -\tau, \qquad m_A=\tau, \qquad \rho_A=\langle x,a\rangle,\]
and
\[
\mathcal{P}_{a,\tau}=\{x\in \mathbb S^{n+1}_1| \langle x,a\rangle =\tau\}.
\]
Moreover,
\[
\lambda_{a,\tau}=\sqrt{\tau^2-\langle a,a\rangle},
\qquad
\operatorname{Sec}(\mathcal{P}_{a,\tau}) = -\frac{\langle a,a\rangle}{\lambda_{a,\tau}^2}.
\]
For later use, we record the standard ambient fields
\[
X_v:=v-\langle x,v\rangle x, \qquad v\in E^\perp.
\]
Under the above identification, the fields constructed in Subsection \ref{subsec:light-cone} become
\[
K_{a,b}=\langle x,a\rangle b-\langle x,b\rangle a,
\]
\[
    T_{a,b,\tau}
    =\langle a,b\rangle X_a-\langle a,a\rangle X_b 
    +\tau K_{a,b},
\]
and
\[
\psi_{A,B}
=
\langle a\wedge x,a\wedge b\rangle
=\inn{a}{a} \inn{x}{b}-\inn{a}{b}\inn{x}{a}.
\]

The causal type of \(a\) determines the corresponding geometry of
\(\mathcal P_{a,\tau}\).

\smallskip
\paragraph{\textbf{Round supports.}}
    If \(a\) is timelike, 
    we normalize \(\langle a,a\rangle=-1\).
    Then
    \[
        \lambda_{a,\tau}=\sqrt{1+\tau^2}, \quad
        \operatorname{Sec}(\mathcal P_{a,\tau})=\lambda_{a,\tau}^{-2}>0.
    \]
Furthermore, \(\Pi_A=\operatorname{span}\{a,E\}\) is negative definite and
\(\mathbb E_A:=\Pi_A^\perp=a^\perp\cap E^\perp\) is a canonically defined
positive-definite space of dimension \(n+1\).

\smallskip
\paragraph{\textbf{Horospherical supports.}}   
    If \(a\) is lightlike, we assume \(\tau>0\). Then
    \[
        \lambda_{a,\tau}=\tau, \quad
        \operatorname{Sec}(\mathcal P_{a,\tau})=0.
    \]
    In this case, \( \Pi_A
    =
    \operatorname{span}\{a,E\}\)
is degenerate, with \(\operatorname{Rad}(\Pi_A)
    =
    \mathbb Ra\).
Thus, the parameter module has a null degeneration.  The corresponding
effective one-dimensional test direction will be identified in
Section~\ref{subsec:finite-family}.    
    
\noindent

If \(a\) is spacelike, we normalize
\(\langle a,a\rangle=1\). The condition that \(A\) be timelike is then
\(\tau^2>1\), and
\[
\operatorname{Sec}(\mathcal P_{a,\tau})
=
-\frac{1}{\tau^2-1}<0.
\]
The parameter module is indefinite and contains one timelike
direction.  Since these supports do not enter the main rigidity
theorem, their detailed formulas are deferred to
Section~\ref{sec:negative-curvature}.

\subsubsection{Spacelike hyperplanes in Minkowski space}
\label{sec:hyperplane}

Choose null vectors \(E_0,E_0^*\in\mathbb V\) such that
\[
\langle E_0,E_0\rangle_{\mathbb V}
=
\langle E_0^*,E_0^*\rangle_{\mathbb V}
=0,
\qquad
\langle E_0,E_0^*\rangle_{\mathbb V}=1.
\]
We identify
\((\mathbb R^{n,1},\langle\cdot,\cdot\rangle)\)
with \(\{E_0,E_0^*\}^\perp\) and use the standard embedding
\[
x\longmapsto
\xi=x+E_0^*-\frac12\langle x,x\rangle E_0.
\]

Since translations of Minkowski space are ambient isometries, after a
translation we may assume that the spacelike totally geodesic support is
\[
\mathcal P
=
\{x\in\mathbb R^{n,1}\mid \langle x,N\rangle=0\},
\qquad
\langle N,N\rangle=-1.
\]
It is represented by
\[
A=-N.
\]
Indeed, 
\[
s_A=-\langle x,N\rangle,
    \qquad
    m_A=0,
    \qquad
    \lambda_A=1.
\]
Consequently,
\(
  \operatorname{Sec}(\mathcal P_A)=0,
\)
and
\(
    \Pi_A
    =
    \operatorname{span}\{N,E_0\}
\)
is degenerate, with
\[
    \operatorname{Rad}(\Pi_A)
    =
    \mathbb RE_0.
\]

The three support geometries relevant to the main theorem may therefore
be summarized as follows:
\[
\begin{array}{c|c|c|c}
\text{support}
&
\Pi_A
&
\operatorname{Sec}(\mathcal P_A)
&
\text{parameter type}
\\ \hline
\text{round de~Sitter}
&
\operatorname{span}\{a,E\}
&
>0
&
\text{positive definite}
\\
\text{de~Sitter horosphere}
&
\operatorname{span}\{a,E\}
&
0
&
\operatorname{Rad}(\Pi_A)=\mathbb Ra
\\
\text{Minkowski hyperplane}
&
\operatorname{span}\{N,E_0\}
&
0
&
\operatorname{Rad}(\Pi_A)=\mathbb RE_0.
\end{array}
\]

In particular, the round case admits a canonical positive parameter
space, whereas the two zero-curvature cases possess only a canonical
null flag.  

\section{The Jacobi--Robin module}
\label{sec:Jacobi-Robin}

\subsection{The support-function identity}

We first establish the support-function identity for a conformal Killing
field and then derive the Jacobi action and Robin boundary condition for
the unified test function \(\varphi_{A,B}\) defined in
\eqref{def:general-test-function}.

\begin{lemma}
\label{Lemma4.1}
Let \((\overline M^{n+1},\overline g)\) be a Lorentzian manifold, and let
\(x\colon(\Sigma^n,g)\to(\overline M^{n+1},\overline g)\) be a spacelike
immersion with unit timelike normal \(\nu\). Suppose that \(X\) is a
conformal Killing field on \(\overline M\) satisfying
\(\mathcal L_X\overline g=2\psi\,\overline g\). Then
        \begin{equation} \label{eq:Jacobi_CK}
            J\big(\overline g(X,\nu)\big)=\overline g(X^\top,\nabla^\Sigma H)+H\psi-n\nu(\psi),
        \end{equation}
        where \(J=\Delta-|h|^2-\overline{\mathrm{Ric}}(\nu,\nu)\).
        In particular, if \(H\) is constant, then 
        \[J\big(\overline g(X,\nu)\big)=H\psi-n\nu(\psi).\]      
    \end{lemma}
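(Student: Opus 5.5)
The plan is a direct computation of \(\Delta u\) for the support function \(u:=\overline g(X,\nu)\) in a local orthonormal frame \(\{e_i\}\) on \(\Sigma\) that is geodesic at the point under consideration. Throughout I will use that \(\nu\) is unit timelike, so \(\overline g(\overline\nabla_{e_i}\nu,\nu)=0\) and \(\overline\nabla_{e_i}\nu=\sum_j h_{ij}e_j\). With the convention \(h_{ij}=\overline g(\overline\nabla_{e_i}\nu,e_j)\), the normal part of \(\overline\nabla_{e_i}e_j\) is \(h_{ij}\nu\), because the coefficient of \(\nu\) in a vector \(V\) is \(-\overline g(V,\nu)\).

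\emph{First derivative.} I differentiate once:
\[
e_i(u)=\overline g(\overline\nabla_{e_i}X,\nu)+\sum_j h_{ij}\overline g(X,e_j).
\]
\emph{Second derivative.} Differentiating again and summing over \(i\) at the center of the frame gives
\[
\Delta u=\sum_i \overline g(\overline\nabla^2_{e_i,e_i}X,\nu)
+2\sum_{i,j}h_{ij}\overline g(\overline\nabla_{e_i}X,e_j)
+\sum_i\overline g\bigl(\overline\nabla_{e_i}X,\overline\nabla_{e_i}\nu\bigr)\text{-type corrections}
+\sum_{i,j}e_i(h_{ij})\overline g(X,e_j)+|h|^2u .
\]
In the middle and last terms I will use the normal part \(h_{ij}\nu\) of \(\overline\nabla_{e_i}e_j\). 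The quantity \(h_{ij}\,\overline g(\overline\nabla_{e_i}X,e_j)\) then only sees the symmetric part of \(\overline\nabla X\). Since \(\mathcal L_X\overline g=2\psi\overline g\), this symmetric part equals \(\psi\,\overline g\), so those terms collapse to a multiple of \(H\psi\). There is also a cross term involving \(\overline g(\overline\nabla_{e_i}X,\nu)\) paired against \(\overline g(\overline\nabla_{e_i}\nu,\cdot)\). I will split it into symmetric and skew parts of \(\overline\nabla X\): the symmetric part contributes nothing there because \(\overline g(e_j,\nu)=0\), and the skew parts cancel between the two occurrences.

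\emph{Codazzi step.} For the term \(\sum e_i(h_{ij})\), I will use the Lorentzian Codazzi equation, \(\sum_i h_{ij,i}=e_j(H)+\overline{\mathrm{Ric}}(\nu,e_j)\) up to a sign fixed by the conventions above. Paired with \(X^\top\), this produces \(\overline g(X^\top,\nabla^\Sigma H)\) plus an ambient Ricci term in the tangential direction.

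\emph{Second covariant derivative of \(X\).} For \(\sum_i\overline g(\overline\nabla^2_{e_i,e_i}X,\nu)\), I will use the standard identity for conformal Killing fields,
\[
\overline\nabla^2_{Y,Z}X=\overline R(Y,X)Z+Y(\psi)Z+Z(\psi)Y-\overline g(Y,Z)\overline\nabla\psi .
\]
This follows from the Killing-type argument applied to the symmetric relation \(\overline g(\overline\nabla_YX,Z)+\overline g(\overline\nabla_ZX,Y)=2\psi\,\overline g(Y,Z)\), by cyclically permuting and using the Bianchi identity. Tracing with \(Y=Z=e_i\) and pairing with \(\nu\), the \(Y(\psi)Z\) and \(Z(\psi)Y\) terms drop out because \(\overline g(e_i,\nu)=0\). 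The term \(-\overline g(e_i,e_i)\overline\nabla\psi\) gives \(-n\nu(\psi)\). The curvature term gives \(\sum_i\overline g(\overline R(e_i,X)e_i,\nu)\).

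\emph{Recombining the curvature terms.} Decomposing \(X=X^\top-u\nu\), the curvature trace splits into \(-\overline{\mathrm{Ric}}(\nu,\nu)\,u\), up to sign, plus a tangential Ricci piece. The tangential piece should cancel exactly against the Ricci term produced by the Codazzi step.

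\emph{Main obstacle.} The only real difficulty is sign bookkeeping in the Lorentzian setting: \(\overline g(\nu,\nu)=-1\), the sign in the Codazzi equation, and the sign in the trace of \(\overline R\) against \(\nu\). I will fix these by first checking the case where \(X\) is a Killing field (\(\psi=0\)) and \(\Sigma\) is a totally geodesic slice in Minkowski space, where all curvature terms vanish. The formula then reduces to the known relation \(Ju=\overline g(X^\top,\nabla H)\) for Jacobi fields. I will then carry out the full computation to obtain \(\Delta u-|h|^2u-\overline{\mathrm{Ric}}(\nu,\nu)u=\overline g(X^\top,\nabla^\Sigma H)+H\psi-n\nu(\psi)\). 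The special case for constant \(H\) is then immediate.
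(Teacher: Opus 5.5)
Your route differs from the paper's. The paper never expands $\Delta u$. It takes the local flow $\Phi_t$ of $X$, treats $\Phi_t\circ x$ as a variation with normal speed $-\overline g(X,\nu)$, and computes $\frac{d}{dt}\big|_{t=0}H_t$ in two ways:
\begin{itemize}
\item from the standard first-variation formula for mean curvature, which gives $-J\overline g(X,\nu)+\overline g(X^\top,\nabla^\Sigma H)$;
\item from $\Phi_t^*\overline g=e^{2u_t}\overline g$ with $\dot u_0=\psi$ and the conformal law $H_t=e^{-u_t}(H+n\nu(u_t))$, which gives $-H\psi+n\nu(\psi)$.
\end{itemize}
All curvature and Codazzi bookkeeping, and the Lorentzian signs, are thereby absorbed into the variation formula that defines $J$.

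Your direct frame computation is more self-contained and needs no variational input. Its ingredients are correct:
\begin{itemize}
\item the Hessian identity $\overline\nabla^2_{Y,Z}X=\overline R(Y,X)Z+Y(\psi)Z+Z(\psi)Y-\overline g(Y,Z)\overline\nabla\psi$ holds with $\overline R(Y,X)=[\overline\nabla_Y,\overline\nabla_X]-\overline\nabla_{[Y,X]}$;
\item the Codazzi sign is $\sum_i h_{ij,i}=e_j(H)+\overline{\mathrm{Ric}}(e_j,\nu)$;
\item the curvature trace is $-\overline{\mathrm{Ric}}(X^\top,\nu)+u\,\overline{\mathrm{Ric}}(\nu,\nu)$.
\end{itemize}
So the tangential Ricci terms cancel and the normal one is absorbed by $J$, as you predict.

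The step your sketch does not get right is the coefficient of $H\psi$, and your proposed check cannot detect it. Your display has a factor $2$ in front of $\sum_{i,j}h_{ij}\overline g(\overline\nabla_{e_i}X,e_j)$ and also a separate correction $\sum_i\overline g(\overline\nabla_{e_i}X,\overline\nabla_{e_i}\nu)$. But the latter \emph{is} one of the two copies. Your list also has no place for what $\overline\nabla_{e_i}e_i=h_{ii}\nu\neq0$ (nonzero even at the center of an intrinsic geodesic frame) produces when $e_i$ differentiates $\overline\nabla_{e_i}X$. The correct expansion is
\[
\sum_i e_i\bigl(\overline g(\overline\nabla_{e_i}X,\nu)\bigr)
=\sum_i\overline g(\overline\nabla^2_{e_i,e_i}X,\nu)+H\,\overline g(\overline\nabla_\nu X,\nu)+\sum_{i,j}h_{ij}\,\overline g(\overline\nabla_{e_i}X,e_j).
\]
Here the last two terms are $-H\psi$ and $+H\psi$. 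The symmetric part of $\overline\nabla X$ on $(\nu,\nu)$ is $\psi\,\overline g(\nu,\nu)=-\psi$, and no orthogonality kills it. So this piece contributes no net $H\psi$. Equivalently, if you write $\overline g(\overline\nabla_{e_i}X,\nu)=\omega(e_i,\nu)$ with $\omega$ the skew part, as your symmetric/skew remark suggests, its derivative adds nothing extra. The net $H\psi$ comes only from differentiating $\sum_j h_{ij}\overline g(X,e_j)$. Combining your factor $2$ with a vanishing cross term would give $2H\psi$.

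A Killing field on a totally geodesic slice has $\psi=0$ and $H=0$, so it cannot catch this. Test instead with the position field of $\mathbb R^{n,1}$, where $\psi=1$. There $e_i\langle x,\nu\rangle=\sum_j h_{ij}\langle x,e_j\rangle$ gives directly $J\langle x,\nu\rangle=\langle x^\top,\nabla^\Sigma H\rangle+H$.
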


\begin{proof}
Let \(\Phi_t\) be the local flow generated by \(X\), and consider the
variation \(x_t=\Phi_t\circ x\).
The standard first variation formula for mean curvature gives
\begin{equation} \label{eq:HtCK}
        \frac{d}{dt}\Big|_{t=0}H_t=-J\overline g(X,\nu)+\overline g(X^\top,\nabla^\Sigma H).
\end{equation}        
Since \(X\) is conformal Killing, 
\[\frac{d}{dt}\Phi_t^*\overline g
=\Phi_t^*(\mathcal L_X\overline g)
=2(\psi\circ\Phi_t)\Phi_t^*\overline g.\]
Consequently,
\[
    \Phi_t^* \overline g=e^{2u_t} \overline g, \qquad u_0=0, \qquad \frac{\partial u_t}{\partial t}\Big|_{t=0}=\psi.
\]
Under the conformal change \(\overline g_t=e^{2u_t}\overline g\), the
mean curvature transforms as
\[
 H_t=e^{-u_t}(H+n\nu(u_t)).
\]
Differentiating at \(t=0\) gives
\begin{equation}\label{eq:dHdt}
    \frac{d}{dt}\Big|_{t=0}H_t=-H\psi+n\nu(\psi).
\end{equation}
Comparing \eqref{eq:HtCK} and \eqref{eq:dHdt} proves \eqref{eq:Jacobi_CK}.
\end{proof}

From now on, we return to the setting of
Subsection~\ref{subsec:Minkowski}. Namely, let
\[
\Sigma^n\subset\mathbb Q_1^{n+1}(c)
\]
be a compact spacelike capillary hypersurface supported on
\(\mathcal P_A\).
Since
\[
\overline{\operatorname{Ric}}=cn\overline g,
\qquad
\overline g(\nu,\nu)=-1,
\]
the Jacobi operator reduces to
\[
J=\Delta-|h|^2+cn.
\]

\begin{proposition}
\label{prop:jacobi-action}
For every \(B\in \mathbb V\),
\begin{equation}\label{eq:J-varphi-AB}
J\varphi_{A,B}
=
(H^2-n|h|^2)\psi_{A,B}.
\end{equation}
\end{proposition}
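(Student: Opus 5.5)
The plan is to apply $J=\Delta-|h|^2+cn$ to each of the three terms in the definition \eqref{def:general-test-function} of $\varphi_{A,B}$ separately. Since $H$ and $\cosh\gamma$ are constant, $J$ passes through the coefficients $H$ and $n\lambda_A\cosh\gamma$. The two support-function terms are then handled by Lemma~\ref{Lemma4.1}, and the potential term by a direct Hessian computation. In that computation the $\nu(\psi_{A,B})$ contributions should cancel.

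\emph{The two support-function terms.} By \eqref{eq:T-minus-tauK-conformal}, $\mathcal T_{A,B}$ is conformal Killing with potential $\psi_{A,B}$. Lemma~\ref{Lemma4.1}, with $H$ constant, then gives
\[
J\bigl(\overline g(\mathcal T_{A,B},\nu)\bigr)=H\psi_{A,B}-n\,\nu(\psi_{A,B}).
\]
Since $\mathcal K_{A,B}$ is Killing, its potential is zero, and the same lemma gives $J\bigl(\overline g(\mathcal K_{A,B},\nu)\bigr)=0$. So the last term of $\varphi_{A,B}$ contributes nothing.

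\emph{The potential term.} I will compute $\Delta\psi_{A,B}$ on $\Sigma$ from the ambient Hessian \eqref{eq:Hessipsi_{AB}}. For any ambient function $f$ and an orthonormal frame $\{e_i\}$ of $\Sigma$, one has $\Delta f=\sum_i\overline\nabla^2 f(e_i,e_i)+\sum_i(\overline\nabla_{e_i}e_i)^\perp(f)$. With the timelike normal, the normal part of $\overline\nabla_{e_i}e_i$ is
\[
-\overline g(\overline\nabla_{e_i}e_i,\nu)\,\nu=\overline g(e_i,\overline\nabla_{e_i}\nu)\,\nu=h(e_i,e_i)\,\nu .
\]
Hence $\Delta f=\operatorname{tr}_g\overline\nabla^2 f+H\,\nu(f)$. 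Applying this to $\psi_{A,B}$ gives $\Delta\psi_{A,B}=-cn\psi_{A,B}+H\nu(\psi_{A,B})$, and therefore
\[
J\psi_{A,B}=H\,\nu(\psi_{A,B})-|h|^2\psi_{A,B}.
\]

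\emph{Combining.} Summing the three pieces,
\[
J\varphi_{A,B}=nH\nu(\psi_{A,B})-n|h|^2\psi_{A,B}+H^2\psi_{A,B}-nH\nu(\psi_{A,B})=(H^2-n|h|^2)\psi_{A,B},
\]
which is \eqref{eq:J-varphi-AB}. The only step needing care is the sign bookkeeping with the timelike normal in $\Delta f=\operatorname{tr}\overline\nabla^2 f+H\nu(f)$. This sign must be consistent with the convention behind Lemma~\ref{Lemma4.1}, so that the $\nu(\psi_{A,B})$ terms cancel rather than add. I will check it against the conventions $h(X,Y)=\overline g(\overline\nabla_X\nu,Y)$ and $H=\operatorname{div}_\Sigma\nu$.
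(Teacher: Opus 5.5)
Your proposal is correct and follows the paper's proof essentially line by line. The paper likewise obtains $J\psi_{A,B}=-|h|^2\psi_{A,B}+H\,\nu(\psi_{A,B})$ from the ambient Hessian identity $\overline\nabla^2\psi_{A,B}=-c\,\psi_{A,B}\overline g$ via the Gauss formula $\Delta f=\operatorname{tr}_g\overline\nabla^2 f+H\,\nu(f)$, whose timelike-normal sign you verify correctly. It then applies Lemma~\ref{Lemma4.1} to $\mathcal T_{A,B}$ and to the Killing field $\mathcal K_{A,B}$, and combines the terms so that the $\nu(\psi_{A,B})$ contributions cancel.
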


\begin{proof}
We first compute \(J\psi_{A,B}\). For \(Y,Z\in T\Sigma\), the Gauss
formula gives
\begin{align*}
\nabla^2 \psi_{A,B}(Y,Z)&=\overline{\nabla}^2\psi_{A,B}(Y,Z)+h(Y,Z)\nu(\psi_{A,B})\\
&=-c\psi_{A,B} g(Y,Z) + h(Y,Z)\nu(\psi_{A,B}),
\end{align*}
where we used \eqref{eq:Hessipsi_{AB}}.
Taking the trace with respect to \(g\), we obtain
\[
\Delta \psi_{A,B}=-cn\psi_{A,B}+H\nu(\psi_{A,B}).
\]
Consequently,
\begin{equation}\label{eq:Jpsi_{AB}}
    J\psi_{A,B}=-|h|^2\psi_{A,B}+H\nu(\psi_{A,B}).
\end{equation}

Next, \(\mathcal T_{A,B}\) is conformal Killing with conformal potential
\(\psi_{A,B}\). Since \(H\) is constant,
Lemma~\ref{Lemma4.1} gives
\begin{equation}\label{eq:JT}
J\overline{g}(\mathcal{T}_{A,B}, \nu)=H\psi_{A,B}- n \nu(\psi_{A,B}).
\end{equation}

Since \(\mathcal{K}_{A,B}\) is Killing,
\begin{equation}\label{eq:Jphi_{AB}}
J\overline{g}(\mathcal{K}_{A,B}, \nu)=0.
\end{equation}
Combining \eqref{eq:Jpsi_{AB}}, \eqref{eq:JT}, and
\eqref{eq:Jphi_{AB}} with the definition of \(\varphi_{A,B}\), we obtain
\begin{align*}
  J\varphi_{A,B}&=nJ\psi_{A,B}+HJ\overline{g}(\mathcal{T}_{A,B}, \nu)-n\lambda_A\cosh\gamma J\overline{g}(\mathcal{K}_{A,B},\nu)\\
  &= n\bigl( -|h|^2\psi_{A,B} +H\nu(\psi_{A,B})\bigr) 
+H\bigl( H\psi_{A,B}-n\nu(\psi_{A,B})\bigr) \\
  &=(H^2-n|h|^2)\psi_{A,B}.  \qedhere
\end{align*}
\end{proof}

\subsection{The Robin boundary condition}
\label{subsec:robin-boundary-condition}
Since
\(h^{\mathcal P_A}
=
\kappa_A\,\overline g|_{T\mathcal P_A}\)
and \(\overline g(\overline\nu,\overline\nu)=1\), the Robin coefficient
in Lemma~\ref{lem:second-variation} becomes
\begin{equation}\label{eq:qA}
q=q_A
:=
\coth\gamma\,h(\mu,\mu)
-
\frac{\kappa_A}{\sinh\gamma}.
\end{equation}

\begin{lemma}
Let \(Y\) be a conformal Killing field on \((\overline M,\overline g)\), and set
\[
f_Y=\overline g(Y,\nu),
\qquad
b_Y=\overline g(Y,N).
\]
Then, along \(\partial\Sigma\),
    \begin{equation}\label{lem:Robin-boundary-identity}
    (\nabla_{\mu}-q_A)f_Y=\overline{\nu}(b_Y)+\frac{\kappa_A\cosh \gamma-h(\mu,\mu)}{\sinh\gamma}\, b_Y.
    \end{equation}
In particular, if \(Y\) is tangent to the totally umbilical support
\(\mathcal P_A\), then \((\nabla_{\mu}-q_A)f_Y=0\).
\end{lemma}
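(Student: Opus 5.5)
The plan is to compute \(\nabla_\mu f_Y\) directly and then rewrite every quantity in the boundary frame \((N,\overline\nu)\) using \eqref{eq:boundary-frame-inverse}. Throughout, write \(s=\sinh\gamma\), \(c_\gamma=\cosh\gamma\) and \(y:=\overline g(Y,\overline\nu)\). By \eqref{eq:boundary-frame-inverse} and linearity,
\[
f_Y=c_\gamma b_Y+s\,y,
\qquad
\overline g(Y,\mu)=s\,b_Y+c_\gamma\,y
\qquad\text{along }\partial\Sigma .
\]

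First, differentiate \(f_Y=\overline g(Y,\nu)\) along \(\mu\):
\[
\nabla_\mu f_Y=\overline g(\overline\nabla_\mu Y,\nu)+\overline g(Y,\overline\nabla_\mu\nu).
\]
By \eqref{eq:Dnu} from Proposition~\ref{prop:mu-principal}, the second term equals \(h(\mu,\mu)\,\overline g(Y,\mu)\). Similarly, since \(\overline\nu\in T\mathcal P_A\) and \(\mathcal P_A\) is umbilical,
\[
\overline\nu(b_Y)=\overline g(\overline\nabla_{\overline\nu}Y,N)+\kappa_A\,y .
\]

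The key step is to match the two first-order terms. Decompose \(S(U,V):=\overline g(\overline\nabla_UY,V)\) as \(S=\psi\,\overline g+W\), where \(W\) is skew; this is exactly the conformal Killing condition. Since \(\overline g(\mu,\nu)=\overline g(\overline\nu,N)=0\), the \(\psi\)-parts drop out. Then
\[
W(\mu,\nu)=W(sN+c_\gamma\overline\nu,\;c_\gamma N+s\overline\nu)=(s^2-c_\gamma^2)\,W(N,\overline\nu)=W(\overline\nu,N),
\]
so
\[
\overline g(\overline\nabla_\mu Y,\nu)=\overline g(\overline\nabla_{\overline\nu}Y,N).
\]
Geometrically, the skew part acts on the normal plane \(\nu\wedge\mu=N\wedge\overline\nu\), which is invariant under the hyperbolic rotation relating the two frames. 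I expect this to be the main point of the argument; everything else is bookkeeping.

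Combining these gives
\[
(\nabla_\mu-q_A)f_Y=\overline\nu(b_Y)-\kappa_A y+h(\mu,\mu)(s b_Y+c_\gamma y)-q_A(c_\gamma b_Y+s y).
\]
Now insert \(q_A=\coth\gamma\,h(\mu,\mu)-\kappa_A/s\) from \eqref{eq:qA} and collect terms.
\begin{itemize}
\item The coefficient of \(y\) is \(-\kappa_A+c_\gamma h-c_\gamma h+\kappa_A=0\).
\item The coefficient of \(b_Y\) is \(h(s^2-c_\gamma^2)/s+\kappa_A c_\gamma/s=(\kappa_A\cosh\gamma-h(\mu,\mu))/\sinh\gamma\), using \(s^2-c_\gamma^2=-1\).
\end{itemize}
This is exactly \eqref{lem:Robin-boundary-identity}.

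For the final assertion, suppose \(Y\) is tangent to \(\mathcal P_A\). Then \(b_Y\equiv0\) on \(\mathcal P_A\). Because \(\overline\nu\) is tangent to \(\mathcal P_A\), we also get \(\overline\nu(b_Y)=0\), so the right-hand side vanishes.
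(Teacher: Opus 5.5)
Your proof is correct and follows the paper's argument step by step: it differentiates \(f_Y\) along \(\mu\), uses \eqref{eq:Dnu} for \(\overline\nabla_\mu\nu\), uses the conformal Killing identity \(\overline g(\overline\nabla_\mu Y,\nu)=\overline g(\overline\nabla_{\overline\nu}Y,N)\), uses umbilicity for \(\overline\nu(b_Y)\), and then substitutes \(q_A\). The only differences are minor. You make explicit the skew-part computation behind that identity, which the paper merely asserts as ``pure-trace terms cancel.'' You also do the final bookkeeping in the variables \((b_Y,\overline g(Y,\overline\nu))\) rather than the paper's \((f_Y,b_Y)\).
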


\begin{proof}
    From \eqref{eq:boundary-frame-inverse},  
    \begin{equation}\label{eq:two-supports}
    \overline{g}(Y,\overline\nu)=\frac{f_Y-\cosh\gamma\, b_Y}{\sinh\gamma},\qquad
    \overline{g}(Y,\mu)=\frac{\cosh\gamma \,f_Y- b_Y}{\sinh\gamma}.
    \end{equation}
    Since \(Y\) is conformal Killing, the pure-trace terms in the
    \((N,\overline\nu)\)-component cancel; hence,
    \begin{equation}\label{eq:D-conformal-Killing}
    \overline{g}(\overline\nabla_{\mu}Y, \nu)=\overline{g}(\overline\nabla_{\overline \nu}Y, N).
    \end{equation}
    Since \(\mathcal{P}_A\) is totally umbilical, 
    \[\overline{\nabla}_{\overline \nu}N=\kappa_A\overline{\nu},\]
and thus
\begin{equation}\label{eq:D-YN}
\overline{g}(\overline\nabla_{\overline\nu}Y, N)=\overline{\nu}(b_Y)-\kappa_A \overline{g}(Y,\overline\nu).
\end{equation}
Combining \eqref{eq:D-conformal-Killing}, \eqref{eq:D-YN}, and the
principal-direction identity \eqref{eq:Dnu}, we obtain
\begin{align*}
\nabla_{\mu}f_Y&=\overline{g}(\overline\nabla_{\mu}Y, \nu)+\overline{g}(Y, \overline\nabla_{\mu}\nu)\\
&=\overline{\nu}(b_Y)-\kappa_A\overline{g}(Y,\overline\nu)+h(\mu,\mu)\overline{g}(Y,\mu).
\end{align*}
Substituting \eqref{eq:two-supports} proves
\eqref{lem:Robin-boundary-identity}.
\end{proof}

\begin{lemma}\label{lem:Npsi}
On $\mathcal P_A$, one has
\begin{equation}\label{eq:Npsi}
N(\psi_{A,B})=\kappa_A\psi_{A,B}.
\end{equation}
\end{lemma}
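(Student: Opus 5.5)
The plan is to compute \(N(\psi_{A,B})=\overline g(\overline\nabla\psi_{A,B},N)\) directly from the explicit formula \eqref{def:general-psi}, using \(N=-\lambda_A^{-1}\mathcal X_A\) and the gradient identities \(\overline\nabla s_C=\mathcal X_C\), \(\overline\nabla\rho_C=c\,\mathcal X_C\) (from \(d\rho_C=c\,ds_C\)). Then restrict to \(\mathcal P_A\), where \(s_A=0\) and \(\rho_A=m_A\). Writing
\[
\psi_{A,B}=\langle A,A\rangle_{\mathbb V}\rho_B-\langle A,B\rangle_{\mathbb V}\rho_A+m_A(s_B\rho_A-s_A\rho_B),
\]
the gradient is
\[
\overline\nabla\psi_{A,B}
=c\langle A,A\rangle_{\mathbb V}\mathcal X_B-c\langle A,B\rangle_{\mathbb V}\mathcal X_A
+m_A\bigl(\rho_A\mathcal X_B+c\,s_B\mathcal X_A-\rho_B\mathcal X_A-c\,s_A\mathcal X_B\bigr).
\]

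Pairing with \(\mathcal X_A\) requires the inner products \(\overline g(\mathcal X_A,\mathcal X_A)\) and \(\overline g(\mathcal X_A,\mathcal X_B)\). Since \(\mathcal X_C=C-\rho_C\xi-s_CE_c\) and \(\langle\xi,\xi\rangle_{\mathbb V}=0\), \(\langle\xi,E_c\rangle_{\mathbb V}=1\), \(\langle E_c,E_c\rangle_{\mathbb V}=-c\), expanding gives
\[
\overline g(\mathcal X_A,\mathcal X_B)=\langle A,B\rangle_{\mathbb V}-s_A\rho_B-s_B\rho_A+\text{(terms)},
\]
which I would simplify carefully. On \(\mathcal P_A\) the relevant values are \(\overline g(\mathcal X_A,\mathcal X_A)=\langle A,A\rangle_{\mathbb V}=-\lambda_A^2\) (as already used in the paper) and \(\overline g(\mathcal X_A,\mathcal X_B)=\langle A,B\rangle_{\mathbb V}-m_As_B\) (as in the computation of \(\mathcal T_{A,B}\) restricted to \(\mathcal P_A\)).

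Substituting at \(s_A=0\), \(\rho_A=m_A\):
\[
\overline g(\overline\nabla\psi,\mathcal X_A)
=c\langle A,A\rangle(\langle A,B\rangle-m_As_B)-c\langle A,B\rangle\langle A,A\rangle
+m_A\bigl(m_A(\langle A,B\rangle-m_As_B)+(cs_B-\rho_B)\langle A,A\rangle\bigr).
\]
The \(c\langle A,A\rangle\langle A,B\rangle\) terms cancel, and so do \(-cm_As_B\langle A,A\rangle\) and \(+cm_As_B\langle A,A\rangle\). What remains is \(m_A\bigl(m_A\langle A,B\rangle-m_A^2s_B-\rho_B\langle A,A\rangle\bigr)\). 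On \(\mathcal P_A\) itself, \(\psi_{A,B}=\langle A,A\rangle\rho_B-\langle A,B\rangle m_A+m_A^2s_B\), so this remainder equals \(-m_A\psi_{A,B}\).

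Therefore
\[
N(\psi_{A,B})=-\lambda_A^{-1}\,\overline g(\overline\nabla\psi_{A,B},\mathcal X_A)=\frac{m_A}{\lambda_A}\psi_{A,B}=\kappa_A\psi_{A,B}.
\]

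The only real risk is a sign or term slip in the inner products \(\overline g(\mathcal X_A,\mathcal X_B)\) on \(\mathcal P_A\). As a check, I would also derive the result from the \(\Lambda^2\) form \(\langle A\wedge B,(A-m_A\xi)\wedge(E_c+c\xi)\rangle_{\mathbb V}\) restricted to \(\mathcal P_A\).
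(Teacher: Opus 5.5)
Your computation is correct and follows essentially the same route as the paper: differentiate the explicit formula for \(\psi_{A,B}\) using \(\overline\nabla s_C=\mathcal X_C\) and \(\overline\nabla\rho_C=c\,\mathcal X_C\), pair with \(N=-\lambda_A^{-1}\mathcal X_A\), and evaluate on \(\mathcal P_A\) using \(\overline g(\mathcal X_A,\mathcal X_A)=-\lambda_A^2\) and \(\overline g(\mathcal X_A,\mathcal X_B)=\langle A,B\rangle_{\mathbb V}-m_As_B\). The only difference is cosmetic: the paper first collapses the gradient to \((c\langle A,A\rangle_{\mathbb V}+m_A^2)\mathcal X_B-(c\langle A,B\rangle_{\mathbb V}+m_Am_B)\mathcal X_A\) via \(\rho_A-cs_A=m_A\) and \(\rho_B-cs_B=m_B\); also, your placeholder general formula should read \(\overline g(\mathcal X_A,\mathcal X_B)=\langle A,B\rangle_{\mathbb V}-m_As_B-m_Bs_A-cs_As_B\), though the value you actually use on \(\mathcal P_A\) is right.
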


\begin{proof}
Differentiating the expression
\[
\psi_{A,B}
=\langle A,A\rangle_{\mathbb V}\rho_B
-\langle A,B\rangle_{\mathbb V}\rho_A
+m_A(s_B\rho_A-s_A\rho_B)
\]
gives
\[
Y(\psi_{A,B})=(c\langle A,A\rangle_{\mathbb V}+m_A^2)\overline{g}(\mathcal{X}_B, Y)
-(c \langle A,B\rangle_{\mathbb V}+m_Am_B) \overline{g}(\mathcal{X}_A, Y).
\]
Taking \(Y=N\) and using
\(N=-\lambda_A^{-1}\mathcal X_A\) on \(\mathcal P_A\), together with
\begin{align*}
&\overline{g}(\mathcal{X}_A, N)=\lambda_A, \qquad 
\overline{g}(\mathcal{X}_B,N)=-\lambda_A^{-1}(\langle A,B\rangle_{\mathbb V}-m_A s_B), \\
& s_A=0,\qquad \rho_A=m_A, \qquad \rho_B=m_B+c s_B,
\end{align*}
we obtain
\[N(\psi_{A,B})=\frac{m_A}{\lambda_A}\psi_{A,B}=\kappa_A\psi_{A,B} \qquad \text{ on } \mathcal{P}_A.\]
\end{proof}

We next verify the boundary condition.
    
\begin{proposition}\label{prop:Robin-unified-stable}
For every \(B\in\mathbb V\), 
\[
\nabla_\mu\varphi_{A,B}
=
q_{A}\varphi_{A,B} \qquad \text{on } \, \partial \Sigma.
\]
\end{proposition}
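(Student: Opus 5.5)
The plan is to use linearity of the operator \(\nabla_\mu-q_A\) and treat the three summands of \(\varphi_{A,B}\) in \eqref{def:general-test-function} one at a time:
\[
(\nabla_\mu-q_A)\varphi_{A,B}
= n(\nabla_\mu-q_A)\psi_{A,B}
+H(\nabla_\mu-q_A)\overline g(\mathcal T_{A,B},\nu)
-n\lambda_A\cosh\gamma\,(\nabla_\mu-q_A)\overline g(\mathcal K_{A,B},\nu).
\]
This is legitimate because \(H\) and \(\gamma\) are constant. I expect the first and third terms to cancel exactly and the middle term to vanish by itself.

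\emph{Middle term.} \(\mathcal T_{A,B}\) is a conformal Killing field by \eqref{eq:T-minus-tauK-conformal}, and it is tangent to \(\mathcal P_A\) by \eqref{orthogonal}. So \(b_{\mathcal T}=\overline g(\mathcal T_{A,B},N)\equiv 0\) on \(\mathcal P_A\). Since \(\overline\nu\) is tangent to \(\mathcal P_A\), also \(\overline\nu(b_{\mathcal T})=0\). The Robin identity \eqref{lem:Robin-boundary-identity} then gives \((\nabla_\mu-q_A)\overline g(\mathcal T_{A,B},\nu)=0\).

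\emph{Killing term.} \(\mathcal K_{A,B}\) is Killing, so it is in particular conformal Killing, and \eqref{eq:K-normal-component} gives \(b_{\mathcal K}=\lambda_A^{-1}\psi_{A,B}\) on \(\mathcal P_A\). Again \(\overline\nu\) is tangent to \(\mathcal P_A\), so \(\overline\nu(b_{\mathcal K})=\lambda_A^{-1}\overline\nu(\psi_{A,B})\). Then \eqref{lem:Robin-boundary-identity} yields
\[
(\nabla_\mu-q_A)\overline g(\mathcal K_{A,B},\nu)
=\frac1{\lambda_A}\Bigl(\overline\nu(\psi_{A,B})+\frac{\kappa_A\cosh\gamma-h(\mu,\mu)}{\sinh\gamma}\psi_{A,B}\Bigr).
\]
Multiplying by \(-n\lambda_A\cosh\gamma\) removes the factor \(\lambda_A^{-1}\).

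\emph{Potential term.} Here \(\psi_{A,B}\) is an ambient function, so I compute \(\nabla_\mu\psi_{A,B}=\mu(\psi_{A,B})\) directly. Using \(\mu=\sinh\gamma\,N+\cosh\gamma\,\overline\nu\) from \eqref{eq:boundary-frame-inverse} together with Lemma~\ref{lem:Npsi}, this gives
\[
\mu(\psi_{A,B})=\kappa_A\sinh\gamma\,\psi_{A,B}+\cosh\gamma\,\overline\nu(\psi_{A,B}).
\]
Subtracting \(q_A\psi_{A,B}\) with \(q_A\) from \eqref{eq:qA}, and using \(\sinh\gamma+1/\sinh\gamma=\cosh^2\gamma/\sinh\gamma\), I expect
\[
(\nabla_\mu-q_A)\psi_{A,B}
=\cosh\gamma\Bigl(\overline\nu(\psi_{A,B})+\frac{\kappa_A\cosh\gamma-h(\mu,\mu)}{\sinh\gamma}\psi_{A,B}\Bigr).
\]

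Multiplying this by \(n\), it matches exactly \(n\lambda_A\cosh\gamma\) times the Killing-term expression. So the first and third contributions cancel, and together with the vanishing middle term this proves the proposition.

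The only delicate point is bookkeeping. One must check that \(\overline\nu(\cdot)\) may be applied to the restriction of \(b_{\mathcal K}\) to \(\mathcal P_A\), which holds because \(\overline\nu\in T\mathcal P_A\). One must also track the signs in \(q_A\) and in \eqref{eq:K-normal-component} carefully, so that the coefficient \(n\lambda_A\cosh\gamma\) in \eqref{def:general-test-function} produces an exact cancellation.
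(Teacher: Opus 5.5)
Your proposal is correct and follows the paper's proof essentially step for step. The paper also applies \eqref{lem:Robin-boundary-identity} to $\mathcal T_{A,B}$ (where $b=0$ by tangency to $\mathcal P_A$) and to $\mathcal K_{A,B}$ (where $b=\lambda_A^{-1}\psi_{A,B}$ by \eqref{eq:K-normal-component}), and computes $(\nabla_\mu-q_A)\psi_{A,B}$ via $\mu=\sinh\gamma\,N+\cosh\gamma\,\overline\nu$ and Lemma~\ref{lem:Npsi}; it then concludes, as you do, that $\psi_{A,B}-\lambda_A\cosh\gamma\,\overline g(\mathcal K_{A,B},\nu)$ lies in the kernel of $\nabla_\mu-q_A$.
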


\begin{proof}
We first apply \eqref{lem:Robin-boundary-identity} to the conformal Killing field \(Y=\mathcal{T}_{A,B}\). 
Since \(\mathcal{T}_{A,B}\) is tangent to \(\mathcal{P}_A\), \(b_Y=0\) on \(\mathcal P_A\), and hence
\[b_{\mathcal{T}_{A,B}}=0,\qquad \overline{\nu}(b_{\mathcal{T}_{A,B}})=0,\qquad \text{on } \partial\Sigma.\]
Thus,
\begin{equation}\label{eq:Robin_T_AB}
(\nabla_{\mu}-q_A)\overline{g}(\mathcal{T}_{A,B},\nu)=0.
\end{equation}

Next, apply \eqref{lem:Robin-boundary-identity} to the Killing field
\(Y=\mathcal K_{A,B}\). The normal-component formula
\eqref{eq:K-normal-component} gives
\begin{equation}\label{eq:boundary-K-unified}
(\nabla_{\mu}-q_A) \overline{g}(\mathcal{K}_{A,B},\nu)=\frac{1}{\lambda_A}\parens*{\overline\nu(\psi_{A,B})
+\frac{\kappa_A\cosh\gamma -h(\mu,\mu)}{\sinh\gamma}\psi_{A,B}}.
\end{equation}

Using \(\mu=\sinh\gamma\,N+\cosh\gamma\,\overline\nu\) and
\eqref{eq:Npsi}, we obtain
\begin{align*}
\nabla_{\mu}\psi_{A,B}
&=\sinh\gamma\,N(\psi_{A,B})
  +\cosh\gamma\,\overline\nu(\psi_{A,B})\\
&=\sinh\gamma\,\kappa_A\psi_{A,B}
  +\cosh\gamma\,\overline\nu(\psi_{A,B}).
\end{align*}
Thus,
\begin{align}
(\nabla_{\mu}-q_A)\psi_{A,B}&=\cosh\gamma \, \overline{\nu}(\psi_{A,B})
+\parens*{\sinh\gamma \kappa_A -\coth\gamma h(\mu,\mu)+\frac{\kappa_A}{\sinh\gamma}}\psi_{A,B}\nonumber\\
&=\cosh\gamma\parens*{\overline{\nu}(\psi_{A,B})+\frac{\kappa_A\cosh\gamma-h(\mu,\mu)}{\sinh\gamma}\psi_{A,B}}\label{eq:Robin_psi}
\end{align}
Comparing \eqref{eq:boundary-K-unified} and \eqref{eq:Robin_psi}, we obtain
\begin{equation}\label{eq:Robin_psi-TAB}
(\nabla_{\mu}-q_A)(\psi_{A,B}-\lambda_A\cosh\gamma \overline{g}(\mathcal{K}_{A,B}, \nu))=0.
\end{equation}
Finally, the definition \eqref{def:general-test-function}, together with 
\eqref{eq:Robin_T_AB} and \eqref{eq:Robin_psi-TAB}, gives
\[(\nabla_{\mu}-q_A)\varphi_{A,B}=0.\]
This completes the proof.
\end{proof}

Theorem~\ref{thm:Minkowski-general} and
Propositions~\ref{prop:jacobi-action} and
\ref{prop:Robin-unified-stable} yield the following Jacobi--Robin
package.

\begin{theorem}\label{thm:test-package}
For every \(B\in\mathbb V\), the function \(\varphi_{A,B}\) satisfies
\[
   \int_\Sigma\varphi_{A,B}\,d\Sigma=0,
\]
\[
   J\varphi_{A,B}=-(n|h|^2-H^2)\psi_{A,B}
   \qquad\text{in }\Sigma,
\]
and
\[
   (\nabla_\mu-q_A)\varphi_{A,B}=0
   \qquad\text{on }\partial\Sigma.
\]
Consequently,
\begin{equation}\label{eq:Qphi}
   Q(\varphi_{A,B},\varphi_{A,B})
   =-\int_\Sigma
       (n|h|^2-H^2)\varphi_{A,B}\psi_{A,B}\,d\Sigma.
\end{equation}
\end{theorem}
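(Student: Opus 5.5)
The three assertions of Theorem~\ref{thm:test-package} are already established separately, so the plan is mainly to assemble them and then substitute into the index form. The mean-zero property is exactly Theorem~\ref{thm:Minkowski-general}. The interior equation is Proposition~\ref{prop:jacobi-action}, rewritten as \(J\varphi_{A,B}=(H^2-n|h|^2)\psi_{A,B}=-(n|h|^2-H^2)\psi_{A,B}\). The Robin condition is Proposition~\ref{prop:Robin-unified-stable}. I will simply cite these three results, and check that the standing assumptions match: \(\Sigma\) is a compact spacelike capillary hypersurface supported on \(\mathcal P_A\), with \(H\) constant and constant contact angle \(\gamma\).

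For formula \eqref{eq:Qphi}, I start from the definition \eqref{eq:second-variation} of \(Q\) in Lemma~\ref{lem:second-variation}:
\[
Q(f,f)=\int_\Sigma fJf\,d\Sigma-\int_{\partial\Sigma}f(\nabla_\mu f-qf)\,dS .
\]
In the present setting, the Robin coefficient \(q\) equals \(q_A\). This follows from \eqref{eq:qA}, since \(h^{\mathcal P_A}=\kappa_A\overline g\) and \(\overline g(\overline\nu,\overline\nu)=1\). Taking \(f=\varphi_{A,B}\), the boundary integrand vanishes pointwise by the Robin condition. The interior integrand becomes \(-\varphi_{A,B}(n|h|^2-H^2)\psi_{A,B}\) by the Jacobi identity, and this gives \eqref{eq:Qphi}.

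It remains to note that \(\varphi_{A,B}\) is an admissible test function, meaning it is smooth on \(\Sigma\) and has mean zero. Then \(Q(\varphi_{A,B},\varphi_{A,B})\) is the second variation of an actual volume-preserving admissible variation, by the realization remark following Lemma~\ref{lem:second-variation}. Smoothness holds because \(\varphi_{A,B}\) is built from restrictions of smooth ambient quantities (\(\psi_{A,B}\), \(\mathcal T_{A,B}\), \(\mathcal K_{A,B}\)) paired with the smooth normal \(\nu\).

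The only point requiring care is to confirm that the sign conventions of \(Q\) match those used in the Robin computation, so that "\(\nabla_\mu f-qf\)" in \eqref{eq:second-variation} is the same operator \((\nabla_\mu-q_A)\) that annihilates \(\varphi_{A,B}\). I expect no genuine obstacle here, since the result is a bookkeeping consequence of the earlier propositions.
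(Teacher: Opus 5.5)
Your proposal is correct and is essentially the paper's argument: the paper obtains this package exactly by combining Theorem~\ref{thm:Minkowski-general} with Propositions~\ref{prop:jacobi-action} and~\ref{prop:Robin-unified-stable}, and \eqref{eq:Qphi} then follows by substituting into \eqref{eq:second-variation} with \(q=q_A\) from \eqref{eq:qA}, just as you do. Your remark on admissibility via the realization lemma is not needed for the identity itself, but it is harmless.
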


\begin{remark}
The Robin boundary condition is not an incidental cancellation. Indeed,
the second-variation formula in
Theorem~\ref{thm:second-var-unified} identifies
\(\nabla_\mu-q_A\) as the linearization of the capillary contact-angle
condition. Moreover, the two basic Robin terms
\[
\overline g(\mathcal T_{A,B},\nu)
\quad\text{and}\quad
\psi_{A,B}-\lambda_A\cosh\gamma\,\overline g(\mathcal K_{A,B},\nu)
\]
lie in the kernel of \(\nabla_\mu-q_A\) separately.
\end{remark}

\section{Finite-trace identities for nonnegative-curvature supports}
\label{sec:finite-trace}

\subsection{Finite test families}
\label{subsec:finite-family}

We now specialize the parameter $B$ in the three nonnegative-curvature support geometries.

\subsubsection{Round supports in de~Sitter space}
Write
\[
\mathbb S^{n+1}_1
=\{x\in\mathbb R^{n+1,1}\mid\langle x,x\rangle=1\}.
\]
Let
\[
\mathcal P_{a,\tau}
=\{x\in\mathbb S^{n+1}_1\mid\langle x,a\rangle=\tau\},
\]
where \(\langle a,a\rangle=-1\). Set
\(\lambda=\sqrt{1+\tau^2}\).
Choose an orthonormal spacelike basis
\(\{b_1,\ldots,b_{n+1}\}\) of \(a^\perp\).  
For each \(\alpha\),
\[
\psi_{\alpha}
= -\langle x,b_\alpha\rangle,
\]
and
\[
\begin{aligned}
\varphi_{\alpha}
&=
-n\langle x,b_\alpha\rangle
+
H\langle X_{b_\alpha}+\tau K_{a,b_\alpha},\nu\rangle 
-
n\lambda \cosh\gamma\,
\langle K_{a,b_\alpha},\nu\rangle
\end{aligned}
\]
where
\[X_{b_\alpha}=b_\alpha-\inn{x}{b_\alpha}x, \qquad K_{a,b_\alpha}=\inn{x}{a}b_\alpha-\inn{x}{b_\alpha}a.\]
The finite trace in the round case is taken over the \(n+1\) functions
\[
\{\varphi_1,\ldots,\varphi_{n+1}\}.
\]

\subsubsection{Horospherical supports in de~Sitter space}

Let 
\[
\mathcal P_{a,\tau}=\{x\in \mathbb{S}^{n+1}_1 | \inn{x}{a}=\tau\}, \qquad 
\inn{a}{a}=0,\qquad
\tau>0.
\]
Choose \(b\in\mathbb{R}^{n+1,1}\) such that
\[
\langle b,b \rangle=-1,
\qquad
\langle a,b \rangle=1.
\]
Then
\[
\lambda=\tau,
\qquad
\psi
=
-\langle x,a \rangle,
\]
and the corresponding test function is
\[
\begin{aligned}
\varphi
&=
-n \langle  x,a\rangle 
+
H \langle  X_a+\tau K_{a,b},\nu \rangle 
-
n\tau\cosh\gamma\,
\langle K_{a,b},\nu \rangle .
\end{aligned}
\]
In the horospherical case, the finite-trace contribution
reduces to \(Q(\varphi,\varphi)\). It will be supplemented by the
Dirichlet Green correction introduced in
Subsection~\ref{subsec:finite-trace}.

\subsubsection{Flat supports in Minkowski space}

Let
\[
\mathcal P
=
\{x\in\mathbb R^{n,1}| \langle x,N \rangle =0\},
\qquad
\langle N,N \rangle =-1,
\]
where \(N\) is future-directed.  
In the flat light-cone model, this choice corresponds to
\[A=-N, \qquad B=-E_0^*.\]
For this choice,
\(
\psi_0=1,
\)
and the canonical test function is
\[
\varphi_0
=
n
+
H\inn{x}{\nu}
+
n\cosh\gamma\,\inn{N}{\nu}.
\]

In the flat case, the finite trace reduces to
\(Q(\varphi_0,\varphi_0)\).

\subsection{The finite-trace identity}
\label{subsec:finite-trace}

We now derive the finite-trace identity used in the rigidity argument.
It expresses the Jacobi--Robin test-family contribution as an integral
of the trace-free second fundamental form against a nonnegative
geometric weight. The umbilicity defect satisfies
\[
n|h|^2-H^2\ge 0.
\]
It vanishes identically if and only if \(\Sigma\) is totally umbilical.

\subsubsection{The de~Sitter Green potential}

Let
\[
\mathcal P_{a,\tau}=\{x\in \mathbb S^{n+1}_1| \langle x,a\rangle=\tau\}
\]
be either a round or a horospherical support in de~Sitter space. Thus,
\[
\langle a,a\rangle=-1
\]
in the round case, whereas
\[
\langle a,a\rangle=0,
\qquad
\tau>0,
\]
in the horospherical case.
Put
\[
\lambda=\lambda_{a,\tau}:=\sqrt{\tau^2-\langle a,a\rangle},
\qquad
s:=\langle x,a\rangle,
\qquad
u:=\langle X_a,\nu\rangle,
\]
where
\[
X_a=a-\langle x,a\rangle x.
\]
We use the Dirichlet-normalized Green potential
\begin{equation}\label{2auxdesitter}
\Phi_a
:=
n\bigl(\langle X_a,\nu\rangle-\lambda\cosh\gamma\bigr)
-
H\bigl(\langle x,a\rangle-\tau\bigr).
\end{equation}

Equivalently,
\[
\Phi_a=n(u-\lambda \cosh \gamma)-H(s-\tau).
\]

\begin{lemma}
\label{lem:green-potential}
The function \(\Phi_a\) satisfies
\[
\Phi_a=0\quad\text{on }\partial\Sigma,
\]
and
\[
\Delta\Phi_a=(n|h|^2-H^2)\,\langle X_a,\nu\rangle
=(n|h|^2-H^2)\,u
\quad\text{in }\Sigma.
\]
Consequently,
\[
\mathcal G_a
:=
\int_\Sigma \Phi_a\Delta\Phi_a\,d\Sigma
=
-\int_\Sigma |\nabla\Phi_a|^2\,d\Sigma
\le 0.
\]
\end{lemma}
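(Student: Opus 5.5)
The plan is to verify the two pointwise claims separately, the boundary value and the Laplacian, by specializing the light-cone objects of Subsection~\ref{subsec:light-cone} to the de~Sitter data \(A=a-\tau E\), \(c=1\). The integral statement then follows from Green's formula.

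\textbf{Boundary value.} In the de~Sitter identification we have \(s_A=\langle x,a\rangle-\tau=s-\tau\) and \(\rho_A=m_A+s_A=s\). Since \(\overline\nabla s_A=\overline\nabla s\), the gradient field \(\mathcal X_A\) coincides with \(X_a=a-\langle x,a\rangle x\). On \(\mathcal P_{a,\tau}\supset\partial\Sigma\) the support normal is \(N=-\lambda^{-1}X_a\) by \eqref{eq:general-support-normal}. Hence, along \(\partial\Sigma\),
\[
u=\langle X_a,\nu\rangle=-\lambda\langle N,\nu\rangle=\lambda\cosh\gamma,
\qquad s=\tau .
\]
Both brackets in \eqref{2auxdesitter} vanish there, so \(\Phi_a=0\) on \(\partial\Sigma\). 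This step is immediate.

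\textbf{Laplacian.} I would compute \(\Delta s\) and \(\Delta u\) separately and then combine them.

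For \(s\): by \eqref{eq:Hessians_A} with \(c=1\), \(\overline\nabla^2 s=-s\,\overline g\). The Gauss formula, traced exactly as in the proof of Proposition~\ref{prop:jacobi-action}, gives
\[
\Delta s=-ns+H\,\nu(s)=-ns+Hu,
\]
because \(\nu(s)=\langle X_a,\nu\rangle=u\).

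For \(u\): by \eqref{eq:general-connection-XA}, \(X_a\) is a closed conformal Killing field with \(\mathcal L_{X_a}\overline g=-2s\,\overline g\), so its conformal potential is \(\psi=-s\). Since \(H\) is constant, Lemma~\ref{Lemma4.1} gives
\[
Ju=-Hs+n\,\nu(s)=-Hs+nu .
\]
With \(J=\Delta-|h|^2+n\) this becomes \(\Delta u=|h|^2u-Hs\).

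Combining the two,
\[
\Delta\Phi_a=n\bigl(|h|^2u-Hs\bigr)-H\bigl(-ns+Hu\bigr)=(n|h|^2-H^2)\,u,
\]
as claimed. The only real risk in the proof is bookkeeping of Lorentzian signs: the sign of \(\psi\) for \(X_a\), the convention \(\nu(s)=+u\), and \(J=\Delta-|h|^2+cn\). I would double-check these against the conventions of Lemma~\ref{Lemma4.1} and \eqref{eq:Jpsi_{AB}}.

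\textbf{Integral identity.} Integrating by parts,
\[
\int_\Sigma\Phi_a\Delta\Phi_a\,d\Sigma
=-\int_\Sigma|\nabla\Phi_a|^2\,d\Sigma+\int_{\partial\Sigma}\Phi_a\nabla_\mu\Phi_a\,dS .
\]
The boundary term vanishes because \(\Phi_a|_{\partial\Sigma}=0\). This yields \(\mathcal G_a=-\int_\Sigma|\nabla\Phi_a|^2\,d\Sigma\le0\).
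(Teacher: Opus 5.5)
Your proposal is correct and follows the paper's proof essentially step for step. Like the paper, you get the Dirichlet condition from \(s=\tau\) and \(X_a=-\lambda N\) on the support, compute \(\Delta s=-ns+Hu\) from the Hessian identity and \(\Delta u=|h|^2u-Hs\) from the conformal support-function identity (Lemma~\ref{Lemma4.1}, with potential \(\psi=-s\) for \(X_a\)), and then conclude with Green's formula.
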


\begin{proof}
On \(\partial\Sigma\), we have \(s=\tau\). Moreover, along
\(\mathcal P_{a,\tau}\),
\[
X_a=-\lambda N.
\]
Since \(\inn{\nu}{N}=-\cosh\gamma\), it follows that
\[
u=\langle X_a,\nu\rangle=\lambda\cosh\gamma
\quad\text{on }\partial\Sigma.
\]
Hence, \(\Phi_a=0\) on \(\partial\Sigma\).

It remains to compute the Laplacian. Since \(X_a=\overline\nabla s\) and
\[
\overline\nabla_ZX_a=-sZ,
\]
we have, on \(\Sigma\),
\[
\Delta s=-ns+Hu.
\]
Also, by the conformal support-function identity applied to \(X_a\),
\[
\Delta u=|h|^2u-Hs.
\]
Therefore,
\[
\Delta(nu-Hs)
=
n(|h|^2u-Hs)-H(-ns+Hu)
=
(n|h|^2-H^2)u.
\]
Since \(\Phi_a=nu-Hs+\text{constant}\), this proves the asserted
Laplacian identity. The boundary condition
\(\Phi_a|_{\partial\Sigma}=0\) then gives the Green identity.
\end{proof}

\subsubsection{Statement of the finite-trace identity}

Define the weight \(F\) by
\begin{equation} \label{eq:F}
F
=
\begin{cases}
n\inn{X_a^\top}{X_a^\top},
&
\text{in the round and horospherical de~Sitter cases},\\[4pt]
-n\bigl(1+\cosh\gamma\,\inn{N}{\nu}\bigr),
&
\text{in the flat Minkowski hyperplane case}.
\end{cases}
\end{equation}

\begin{theorem}[Finite-trace identity]
\label{thm:finite-trace}
Let \(\Sigma^n\) be a compact spacelike capillary hypersurface with
constant contact angle \(\gamma>0\).

\begin{enumerate}
\item In the round de~Sitter case,
\[
\sum_{\alpha=1}^{n+1}Q(\varphi_\alpha,\varphi_\alpha)+\mathcal G_a
=
\int_\Sigma (n|h|^2-H^2)F\,d\Sigma .
\]

\item In the horospherical de~Sitter case,
\[
Q(\varphi,\varphi)+\mathcal G_a
=
\int_\Sigma (n|h|^2-H^2)F\,d\Sigma .
\]

\item In the flat Minkowski hyperplane case,
\[
Q(\varphi_0,\varphi_0)
=
\int_\Sigma (n|h|^2-H^2)F\,d\Sigma .
\]
\end{enumerate}

\end{theorem}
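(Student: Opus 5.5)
The plan is to start from the Jacobi--Robin package, Theorem~\ref{thm:test-package}. It already gives
\[
Q(\varphi_{A,B},\varphi_{A,B})=-\int_\Sigma (n|h|^2-H^2)\varphi_{A,B}\psi_{A,B}\,d\Sigma .
\]
Lemma~\ref{lem:green-potential} gives
\[
\mathcal G_a=\int_\Sigma(n|h|^2-H^2)\,u\,\Phi_a\,d\Sigma .
\]
Writing \(D:=n|h|^2-H^2\), each statement therefore reduces to showing that
\[
\int_\Sigma D\Bigl(u\Phi_a-\sum\varphi\psi-F\Bigr)d\Sigma=0 .
\]
In the flat case the \(u\Phi_a\) term is absent. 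I will aim for a pointwise identity wherever possible. Where a pointwise identity fails, I will show that the residual is \(D\) times a function \(w\) for which \(Dw\) is a Laplacian with vanishing boundary flux, in the same way as \(\Phi_a\) in Lemma~\ref{lem:green-potential}.

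\emph{Round case.} Since \(\langle a,a\rangle=-1\) and \(\{b_\alpha\}\) is orthonormal in \(a^\perp\), every \(V\) satisfies
\[
\sum_\alpha\langle x,b_\alpha\rangle\langle V,b_\alpha\rangle=\langle x,V\rangle+s\langle a,V\rangle .
\]
Write \(\psi_\alpha=-\langle x,b_\alpha\rangle\), \(\langle X_{b},\nu\rangle=\langle b,\nu\rangle\) and \(\langle K_{a,b},\nu\rangle=s\langle b,\nu\rangle-\langle x,b\rangle u\). Using \(\langle x,\nu\rangle=0\), the three sums evaluate to
\[
\sum_\alpha\langle x,b_\alpha\rangle^2=1+s^2,\qquad
\sum_\alpha\langle x,b_\alpha\rangle\langle b_\alpha,\nu\rangle=su,\qquad
\sum_\alpha\langle x,b_\alpha\rangle\langle K_{a,b_\alpha},\nu\rangle=-u .
\]
Hence
\[
\sum_\alpha\varphi_\alpha\psi_\alpha=n(1+s^2)-Hsu+(H\tau-n\lambda\cosh\gamma)u .
\]
On the other side,
\[
u\Phi_a=nu^2-n\lambda\cosh\gamma\,u-Hsu+H\tau u .
\]
The difference is \(n(u^2-1-s^2)\). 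This equals \(n\langle X_a^\top,X_a^\top\rangle=n(\langle X_a,X_a\rangle+u^2)\), because \(\langle X_a,X_a\rangle=-1-s^2\). The round identity therefore holds pointwise.

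\emph{Horospherical case.} The same computation with \(\psi=-s\) and \(\langle X_a,X_a\rangle=-s^2\) gives
\[
u\Phi_a-\varphi\psi-F=\tau(H-n\cosh\gamma)\bigl(u+s\langle K_{a,b},\nu\rangle\bigr).
\]
The parenthesis is not obviously zero, so the plan is to show that \(\int_\Sigma D\,(u+s\langle K_{a,b},\nu\rangle)\,d\Sigma=0\). Using \(\Delta s=-ns+Hu\), \(\Delta u=|h|^2u-Hs\), and Lemma~\ref{Lemma4.1} for the Killing field \(K_{a,b}\), I will try to find a combination \(w\) of \(u,s,\langle K_{a,b},\nu\rangle,\langle X_b,\nu\rangle\) and \(\langle x,b\rangle\) with \(\Delta w=D\,(u+s\langle K_{a,b},\nu\rangle)\). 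I will then evaluate \(\int_{\partial\Sigma}\nabla_\mu w\,dS\) using \(s=\tau\), \(u=\tau\cosh\gamma\), \eqref{eq:Dnu} and \eqref{eq:hmm}. The resulting boundary integrals should be identified as zero via \eqref{eq:Minkowski2-general}, Lemma~\ref{lem:Killing-flux-general} and Proposition~\ref{prop:Minkowski-boundary-general}. Care is needed here, since the choice of \(b\) only matters modulo \(a\).

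\emph{Flat case.} Here \(\psi_0=1\), so \(Q(\varphi_0,\varphi_0)=-\int D\varphi_0\). Since \(F=-(\varphi_0-H\langle x,\nu\rangle)\), it suffices to show \(\int_\Sigma D\langle x,\nu\rangle=0\). Using \(\Delta\langle x,\nu\rangle=|h|^2\langle x,\nu\rangle+H\) and \(\Delta\tfrac12\langle x,x\rangle=n+H\langle x,\nu\rangle\), one gets
\[
D\langle x,\nu\rangle=\Delta\bigl(n\langle x,\nu\rangle-\tfrac H2\langle x,x\rangle\bigr).
\]
This is a Minkowski analogue of \(\Phi_a\). The remaining task is to show that its boundary flux vanishes on \(\partial\Sigma\subset\{\langle x,N\rangle=0\}\). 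For this I will use the boundary frame \eqref{eq:boundary-frame-inverse} and the Minkowski-type boundary formulas with \(B=-E_0^*\).

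I expect these boundary-flux cancellations in the two zero-curvature cases to be the main obstacle, since they rely on the null degeneration of \(\Pi_A\) rather than on a trace over a positive-definite basis. If a direct cancellation fails, I would try adding the residual to the Green correction, that is, replacing \(\Phi_a\) by a modified Dirichlet potential that absorbs it.
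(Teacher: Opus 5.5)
You have a genuine gap in the horospherical case. Your round case is correct and coincides with the paper's computation. Your reduction of the horospherical case to $\int_\Sigma D\,(u+sf_Y)\,d\Sigma=0$, with $f_Y=\langle K_{a,b},\nu\rangle$, is also the paper's. But this vanishing is the entire content of that case, and the route you propose for it does not work. You exhibit no $w$ with $\Delta w=D(u+sf_Y)$. None exists among linear combinations of $u,s,f_Y,\langle X_b,\nu\rangle,\langle x,b\rangle$: their Laplacians are again linear in these functions, with coefficients built from $n,H,|h|^2$, whereas $D\,sf_Y$ is quadratic in them. Only the $Du$ part is a Laplacian, namely $\Delta\Phi_a$.

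The $D\,sf_Y$ part is instead a Wronskian. Since $J\varphi=Ds$ and $Jf_Y=0$, one has $D\,sf_Y=\operatorname{div}(f_Y\nabla\varphi-\varphi\nabla f_Y)$. The Robin condition $\nabla_\mu\varphi=q\varphi$ then gives $\int_\Sigma D\,sf_Y\,d\Sigma=\int_{\partial\Sigma}(qf_Y-\nabla_\mu f_Y)\varphi\,dS$. Because $K_{a,b}$ is not tangent to the support ($\langle K_{a,b},N\rangle=\psi/\lambda=-1$ on $\partial\Sigma$), \eqref{eq:boundary-K-unified} with $\kappa=1$ gives the Robin defect $qf_Y-\nabla_\mu f_Y=\coth\gamma-h(\mu,\mu)/\sinh\gamma$. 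Add $\int_\Sigma Du\,d\Sigma=\int_{\partial\Sigma}\nabla_\mu\Phi_a\,dS=\tau\sinh\gamma\int_{\partial\Sigma}(nh(\mu,\mu)-H)\,dS$, and use \eqref{eq:hmm} and $\int_{\partial\Sigma}\varphi\,dS=0$. Everything collapses to
\[
\tau\sinh\gamma\,(H-n\cosh\gamma)\int_{\partial\Sigma}\bigl(n-1+\widehat H\langle Y,\overline\nu\rangle\bigr)\,dS .
\]
Here your proposed tools are insufficient. \eqref{eq:Minkowski2-general}, Lemma~\ref{lem:Killing-flux-general} and Proposition~\ref{prop:Minkowski-boundary-general} are first-order divergence identities on $\Sigma$. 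Their boundary terms contain neither $h(\mu,\mu)$ nor $\widehat H$, so they cannot control $\int_{\partial\Sigma}\widehat H\langle Y,\overline\nu\rangle\,dS$. The missing ingredient is the classical Minkowski formula for the closed hypersurface $\partial\Sigma$ of the flat horosphere $\mathcal P_{a,\tau}$. The projection $Y^{\mathcal P}=Y-N$ of $Y$ onto $T\mathcal P_{a,\tau}$ satisfies $\nabla^{\mathcal P}Y^{\mathcal P}=-\mathrm{Id}$. Integrating the divergence of its part tangent to $\partial\Sigma$ shows that the last integral vanishes. Your fallback of enlarging $\Phi_a$ would change $\mathcal G_a$, and so would prove a different identity from the one stated.

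In the flat case your potential $n\langle x,\nu\rangle-\frac H2\langle x,x\rangle$ is correct and gives a workable alternative, but it needs the same extra ingredient. By \eqref{eq:Dnu} its flux is $\cosh\gamma\int_{\partial\Sigma}(nh(\mu,\mu)-H)\langle x,\overline\nu\rangle\,dS$. Handling the $h(\mu,\mu)$ term requires \eqref{eq:hmm} with $\kappa=0$ together with the Euclidean Minkowski formula $\int_{\partial\Sigma}\widehat H\langle x,\overline\nu\rangle\,dS=(n-1)\int_{\partial\Sigma}dS$. Only then does the flux become $(n-1)\coth\gamma\int_{\partial\Sigma}\varphi_0\,dS=0$. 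The paper avoids all boundary computation. Since $x$ is tangent to $\mathcal P$, the functions $\langle x,\nu\rangle$ and $\varphi_0$ satisfy the same Robin condition. Green's formula with $J\langle x,\nu\rangle=H$, $J\varphi_0=-D$ and $\int_\Sigma\varphi_0\,d\Sigma=0$ then yields $\int_\Sigma D\langle x,\nu\rangle\,d\Sigma=0$ directly. This device — pairing a Robin function against another via Green's formula rather than seeking a Laplacian potential — is exactly what your horospherical argument is missing.
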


\begin{proof}

We prove the three cases separately.

\paragraph{\textbf{Round de~Sitter support.}}

Assume
\[
\langle a,a\rangle=-1,
\qquad
\lambda=\sqrt{1+\tau^2}.
\]
Let \(\{b_1,\ldots,b_{n+1}\}\) be an orthonormal spacelike basis of
\(a^\perp\).  For each \(\alpha\),
\[
\psi_\alpha=-\langle x,b_\alpha\rangle,
\]
and
\[
\varphi_\alpha
=
-n\langle x,b_\alpha\rangle
+
H\langle X_{b_\alpha}+\tau K_{a,b_\alpha},\nu\rangle
-
n\lambda\cosh\gamma\,\langle K_{a,b_\alpha},\nu\rangle .
\]
By the Jacobi--Robin test package,
\[
Q(\varphi_\alpha,\varphi_\alpha)
=
-\int_\Sigma (n|h|^2-H^2)\,\varphi_\alpha\psi_\alpha\,d\Sigma .
\]
The elementary contractions
\[
\sum_{\alpha=1}^{n+1}\inn{x}{b_\alpha}^2
=
1+\inn{x}{a}^2,
\qquad
\sum_{\alpha=1}^{n+1}\inn{x}{b_\alpha}X_{b_\alpha}
=
\inn{x}{a}X_a,
\]
and
\[
\sum_{\alpha=1}^{n+1}\inn{x}{b_\alpha}K_{a,b_\alpha}
=
-X_a,
\]
give
\[
\begin{aligned}
\sum_{\alpha=1}^{n+1}
\varphi_{\alpha}\psi_{\alpha}
&=
n(1+s^2)
+
H(\tau-s)u 
-
n\lambda\cosh\gamma\,u.
\end{aligned}
\]
On the other hand, by Lemma~\ref{lem:green-potential},
\[
\mathcal G_a
=
\int_\Sigma (n|h|^2-H^2)\,u
\bigl(n(u-\lambda \cosh\gamma)-H(s-\tau)\bigr)\,d\Sigma .
\]
Therefore,
\[
\begin{aligned}
\sum_{\alpha=1}^{n+1}Q(\varphi_\alpha,\varphi_\alpha)+\mathcal G_a
&=
n\int_\Sigma
 (n|h|^2-H^2)\,
(u^2-1-s^2)\,d\Sigma .
\end{aligned}
\]
Since
\(\inn{X_a}{X_a}=-1-\inn{x}{a}^2,\)
we have
\[
-1-\inn{x}{a}^2+\inn{X_a}{\nu}^2
=
\inn{X_a^\top}{X_a^\top}.
\]
Hence,
\[
\sum_{\alpha=1}^{n+1} Q(\varphi_\alpha, \varphi_\alpha) +\mathcal G_a
=
n\int_\Sigma
(n|h|^2-H^2)\,
\inn{X_a^\top}{X_a^\top}\,d\Sigma.
\]

\paragraph{\textbf{Horospherical de~Sitter support.}}
Assume
\[
\langle a,a\rangle=0,\qquad \tau>0,
\qquad \lambda=\tau.
\]
Choose \(b\in\mathbb R^{n+1,1}\) such that
\[
\langle b,b\rangle=-1,
\qquad
\langle a,b\rangle=1.
\]
Set
\[
Y:=K_{a,b},
\qquad
f_Y:=\langle Y,\nu\rangle .
\]
The canonical horospherical test function is
\[
\varphi
=
-n\inn{x}{a}+H\langle X_a+\tau Y,\nu\rangle
-
n\tau \cosh\gamma \langle Y,\nu\rangle .
\]
Thus,
\[
\varphi=-ns+Hu+\tau(H-n\cosh \gamma)\inn{Y}{\nu},
\qquad
\psi=-s.
\]
The Jacobi--Robin test package gives
\[
Q(\varphi,\varphi)
=
-\int_\Sigma  (n|h|^2-H^2) \,\varphi\psi\,d\Sigma
=
\int_\Sigma  (n|h|^2-H^2)\,s\varphi\,d\Sigma .
\]
Using Lemma~\ref{lem:green-potential}, we obtain
\[
\begin{aligned}
Q(\varphi,\varphi)+\mathcal G_a
&=
\int_\Sigma  (n|h|^2-H^2)
\bigl[-ns^2+Hsu+\tau(H-n\cosh \gamma)s f_Y \bigr]\,d\Sigma  \\
&\quad+
\int_\Sigma  (n|h|^2-H^2)
\bigl[nu^2-Hsu+\tau(H-n\cosh \gamma)u\bigr]\,d\Sigma  \\
&=
n\int_\Sigma (n|h|^2-H^2)(u^2-s^2)\,d\Sigma \\
&\qquad +
\tau(H-n\cosh\gamma)
\int_\Sigma (n|h|^2-H^2) (s f_Y+u)\,d\Sigma .
\end{aligned}
\]

We claim that
\[
\int_\Sigma (n|h|^2-H^2)(sf_Y+u)\,d\Sigma=0.
\]
Indeed, since
\[
J\varphi=(n|h|^2-H^2)s,
\qquad
Jf_Y=0,
\]
Green's formula and the Robin condition for \(\varphi\) yield
\[
\int_\Sigma (n|h|^2-H^2) s\, f_Y \,d\Sigma
= \int_{\partial \Sigma} (f_Y \nabla_\mu \varphi - \varphi \nabla_\mu f_Y) dS.
\]
Since \(\nabla_\mu \varphi=q\varphi\) on \(\partial \Sigma\), this becomes
\[
\int_\Sigma (n|h|^2-H^2) s\, f_Y \,d\Sigma
=
\int_{\partial\Sigma}
\bigl(q  f_Y-\nabla_\mu f_Y\bigr)\varphi\,dS .
\]

We now compute \(q  f_Y-\nabla_\mu f_Y\).
Let \(b_Y:=\langle Y,N\rangle\). For the Killing field \(Y=K_{a,b}=\mathcal K_{A,B}\), the boundary identity
\(
\langle \mathcal K_{A,B},N\rangle=\frac{\psi_{A,B}}{\lambda_A}
\)
gives
\(
b_Y=\frac{\psi}{\lambda}.
\)
In the horospherical case, \(\lambda=\tau\), \(\psi=-s\).
Since \(s=\tau\) on \(\mathcal P_{a,\tau}\), we have
\[b_Y=-1 \qquad \text{on } \partial \Sigma.\] 
Moreover, \(\overline\nu\) is tangent to \(\mathcal P_{a,\tau}\), and \(s\) is constant on \(\mathcal P_{a,\tau}\).
Hence, 
\[\overline\nu(b_Y)=0.\]
Applying \eqref{eq:boundary-K-unified} to \(Y\) and using \(\kappa=1\), we obtain 
\[
q f_Y-\nabla_\mu f_Y
=
\coth\gamma-\frac{h(\mu,\mu)}{\sinh\gamma}.
\]
Equations \eqref{eq:hmm} and
\eqref{eq:general-test-zero-boundary} give
\begin{equation}\label{eq:relation_H_hatH}
h(\mu,\mu)
=
H-(n-1)\cosh\gamma-\sinh\gamma\,\widehat H,
\end{equation}
where \(\widehat H\) is the mean curvature of \(\partial\Sigma\) in the flat support
\(\mathcal P_{a,\tau}\), 
and
\(\int_{\partial\Sigma}\varphi\,dS=0\).
It follows that 
\begin{equation}\label{eq:sy}
\int_\Sigma (n|h|^2-H^2) s\, f_Y\,d\Sigma
=
\int_{\partial\Sigma}\widehat H \varphi\,dS.
\end{equation}

By \eqref{eq:general-varphi-boundary},
\[
\varphi
=
\tau\sinh\gamma
\bigl(
(H-n\cosh\gamma)\inn{Y}{\overline\nu}
+
n\sinh\gamma
\bigr)
\qquad
\text{on }\partial\Sigma.
\]
On the other hand, Lemma~\ref{lem:green-potential} gives
\(\Delta\Phi_a=(n|h|^2-H^2)u\), and hence
\begin{equation}\label{eq:u_Green}
\int_\Sigma (n|h|^2-H^2)u d\Sigma=\int_{\partial\Sigma} \nabla_\mu \Phi_a dS .
\end{equation}
Using \(s=\tau\), \(u=\tau\cosh\gamma\) on \(\partial \Sigma\) and \eqref{eq:relation_H_hatH}, one computes
\begin{align}
\nabla_\mu \Phi_a
&=\tau\sinh\gamma (nh(\mu,\mu)-H)\nonumber\\
&=\tau\sinh\gamma ((n-1)(H-n\cosh\gamma)-n\sinh\gamma\widehat H).\label{eq:nabla_Phi_a}
\end{align}
Combining \eqref{eq:sy}, \eqref{eq:u_Green}, and
\eqref{eq:nabla_Phi_a} gives
\[
\int_\Sigma (n|h|^2-H^2) (sf_Y+u)d\Sigma=\tau\sinh\gamma (H-n\cosh\gamma)\int_{\partial\Sigma}(n-1+\widehat H \inn{Y}{\overline \nu})dS.
\]
Finally, since \(\partial\Sigma\) is a closed hypersurface in the flat
support
\(\mathcal P_{a,\tau}\), the classical Minkowski formula gives
\[
\int_{\partial\Sigma}
\bigl(
n-1+\widehat H\inn{Y}{\overline\nu}
\bigr)dS
=
0.
\]

The classical Minkowski formula therefore proves the claim. Hence,
\[
Q(\varphi,\varphi)+\mathcal G_a
=
n\int_\Sigma
(n|h|^2-H^2)
(u^2-s^2)
d\Sigma.
\]
Since
\(\inn{X_a}{X_a}=-s^2\),
we have
\(
u^2-s^2
=
\inn{X_a^\top}{X_a^\top}.\)
Therefore,
\[
Q(\varphi,\varphi)+\mathcal G_a
=
\int_\Sigma
(n|h|^2-H^2)\,
n\inn{X_a^\top}{X_a^\top}\,d\Sigma.
\]

\paragraph{\textbf{Flat Minkowski hyperplane support.}}
Let 
\[\mathcal P=\{x\in \mathbb{R}^{n,1}| \inn{x}{N}=0 \}, \qquad 
\inn{N}{N}=-1,\]
where \(N\) is future-directed.
The canonical test function is
\[
\varphi_0
=
n+H\langle x,\nu\rangle+n\cosh\gamma\,\langle N,\nu\rangle.
\]
It satisfies
\[J\varphi_0=-(n|h|^2-H^2), \qquad 
\nabla_\mu \varphi_0=\coth\gamma h(\mu,\mu)\varphi_0.\]
Therefore,
\[Q(\varphi_0,\varphi_0)=-\int_\Sigma (n|h|^2-H^2) \varphi_0 d\Sigma .\]
We also use the standard Green cancellation
\[
\int_\Sigma (n|h|^2-H^2)\langle x,\nu\rangle\,d\Sigma=0.
\]
Indeed, \(\langle x,\nu\rangle\) and \(\varphi_0\) satisfy the same homogeneous
Robin boundary condition, while
\[
J\langle x,\nu\rangle=H,
\qquad
J\varphi_0=-(n|h|^2-H^2),
\qquad
\int_\Sigma\varphi_0\,d\Sigma=0.
\]
Green's formula gives the claimed cancellation. Hence,
\[
\begin{aligned}
Q(\varphi_0,\varphi_0)
&=
-\int_\Sigma
(n|h|^2-H^2)
\bigl(n+H\langle x,\nu\rangle
+n\cosh\gamma\,\langle N,\nu\rangle\bigr)
\,d\Sigma  \\
&=
\int_\Sigma
(n|h|^2-H^2)
\bigl[-n(1+\cosh\gamma\,\langle N,\nu\rangle)\bigr]
\,d\Sigma .
\end{aligned}
\]
This proves the asserted identity in the flat case.
\end{proof}

\begin{remark}
The Dirichlet Green term \(\mathcal G_a\) enters both de~Sitter finite-trace
identities. In the round case it combines with the positive-definite trace to
produce the final geometric weight. In the horospherical case it additionally
supplies the contribution lost in the null degeneration of the parameter
space. Thus, the auxiliary function is not an ad hoc correction; it is the
Green potential associated with the closed conformal field \(X_a\).
\end{remark}

\section{The instability criterion and rigidity}
\label{sec:rigidity}

\subsection{Proof of the main theorem}

\begin{theorem}[Finite-dimensional instability criterion]\label{thm:criterion}
Let \(\Sigma\) satisfy all the geometric assumptions of
Theorem~\ref{thm:intro-criterion}, except stability. If \(\Sigma\) is
not totally umbilical, then
there exists an admissible mean-zero test function \(\varphi\) such that
\[
Q(\varphi,\varphi)>0.
\]
More precisely,
\begin{enumerate}[label=\textup{(\roman*)}]
\item in the round case,
\[
   \max_{1\le\alpha\le n+1}
   Q(\varphi_\alpha,\varphi_\alpha)
   \ge
   \frac1{n+1}\int_\Sigma (n|h|^2-H^2)F\,d\Sigma>0;
\]
\item in the horospherical case,
\[
   Q(\varphi,\varphi)
   \ge\int_\Sigma (n|h|^2-H^2)F\,d\Sigma>0;
\]
\item in the flat case,
\[
Q(\varphi_0,\varphi_0)=\int_\Sigma (n|h|^2-H^2)F\,d\Sigma>0.
\]
\end{enumerate}
\end{theorem}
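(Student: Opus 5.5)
The plan is to read off the criterion from the finite-trace identity (Theorem~\ref{thm:finite-trace}), the sign of the Dirichlet Green correction (Lemma~\ref{lem:green-potential}), and a pointwise sign analysis of the weight \(F\) in \eqref{eq:F}. Admissibility of each test function is already settled: by Theorem~\ref{thm:test-package}, every \(\varphi_\alpha\), \(\varphi\), \(\varphi_0\) has mean zero and satisfies the Robin condition. The remark after Lemma~\ref{lem:second-variation} then realizes each of them as the normal speed of an admissible volume-preserving variation.

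The first step is the reduction to the weight. In the round case, \(\mathcal G_a\le0\) together with Theorem~\ref{thm:finite-trace}(1) gives
\[
\sum_{\alpha}Q(\varphi_\alpha,\varphi_\alpha)\ge\int_\Sigma(n|h|^2-H^2)F\,d\Sigma.
\]
Bounding the maximum of the \(n+1\) terms below by their average gives (i). The horospherical case (ii) follows in the same way from Theorem~\ref{thm:finite-trace}(2), and the flat case (iii) is the identity in Theorem~\ref{thm:finite-trace}(3). It remains to show that the right-hand side is strictly positive whenever \(\Sigma\) is not totally umbilical.

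The second step is the positivity of \(F\). In the de~Sitter cases \(X_a^\top\) is tangent to the spacelike \(\Sigma\), so \(F=n\langle X_a^\top,X_a^\top\rangle\ge0\). In the flat case, \(\nu\) and \(N\) are unit timelike vectors in the same time cone, so \(\langle N,\nu\rangle\le-1\). Since \(\cosh\gamma\ge1\), this gives \(1+\cosh\gamma\langle N,\nu\rangle\le 1-\cosh\gamma<0\), hence \(F\ge n(\cosh\gamma-1)>0\) everywhere because \(\gamma>0\). Strict positivity of the integral then holds in the flat case as soon as \(n|h|^2-H^2\not\equiv0\), and that defect is a nonnegative continuous function.

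The main obstacle is the de~Sitter cases, where \(F\) can vanish, namely where \(X_a^\top=0\). I would argue as follows. The defect \(n|h|^2-H^2\) is positive on a nonempty open set \(U\). If \(F\equiv0\) on \(U\), then \(\nabla s=X_a^\top=0\) there, so \(s\) is constant on \(U\). Then \(\Delta s=-ns+Hu=0\) on \(U\), and the Hessian identity \(\nabla^2 s=(-s+u\,\cdot)\)-type relation, that is \(\nabla^2 s=-s\,g+u\,h\) obtained from the Gauss formula, forces \(u\,h=s\,g\) on \(U\). If \(u\neq0\) somewhere in \(U\), then \(h\) is proportional to \(g\) there, contradicting \(n|h|^2-H^2>0\). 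If \(u=0\) on \(U\), then \(X_a\) vanishes identically along \(U\), since both its tangential and normal parts are zero. But \(\langle X_a,X_a\rangle=-1-s^2<0\) in the round case, a contradiction. In the horospherical case \(\langle X_a,X_a\rangle=-s^2\), so \(X_a=0\) forces \(s=0\) there, and I would then need to exclude this separately, for instance by noting that then \(s=0\) on an open set, and \(\Delta s=Hu\) gives nothing new. I would handle it by analyticity: \(\Sigma\) is CMC, hence real-analytic, so an open piece lying in \(\{s=0,\,X_a^\top=0\}\) would propagate to all of \(\Sigma\), contradicting \(s=\tau>0\) on \(\partial\Sigma\). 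Hence \(F>0\) somewhere on \(U\), and the integral is strictly positive, which completes all three cases.
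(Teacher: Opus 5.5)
Your proposal is correct and follows the same route as the paper: the finite-trace identity with \(\mathcal G_a\le 0\) (and max \(\ge\) average in the round case), reverse Cauchy--Schwarz for the flat weight, and, on \(\{X_a^\top=0\}\), the identity \(u\,h=s\,g\), which is the paper's \(\rho\,h=-s\,g\) with \(\rho=-u\). The only difference is that your analyticity detour in the horospherical case is unnecessary: \(X_a=a-\langle x,a\rangle x\) vanishes nowhere, since \(a=s x\) would force \(\langle a,a\rangle=s^2\), which is impossible when \(\langle a,a\rangle=-1\) and forces \(a=0\) when \(a\) is null; this is exactly why the paper's \(\rho\) is nonvanishing.
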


\begin{proof}
Since the Green correction \(\mathcal G_a\) is nonpositive in the two
de~Sitter cases and no Green correction is needed in the flat case,
Theorem~\ref{thm:finite-trace} implies
\[
\sum Q(\varphi,\varphi)\geq
\int_\Sigma (n|h|^2-H^2)F\,d\Sigma.
\]
It remains to show that the right-hand side is strictly positive whenever
\(n|h|^2-H^2\not\equiv0\).

In the flat Minkowski case, \(N\) and \(\nu\) are future-directed unit
timelike fields. The reverse Cauchy--Schwarz inequality gives
\[
\inn{N}{\nu}\leq-1
\qquad\text{on }\Sigma.
\]
Hence,
\[
\begin{aligned}
F&=-n\bigl(1+\cosh\gamma\,\inn{N}{\nu}\bigr)\\
&\geq n(\cosh \gamma -1) > 0.
\end{aligned}
\]
Thus, \(n|h|^2-H^2\not\equiv 0\) implies
\(\int_\Sigma (n|h|^2-H^2)F\,d\Sigma>0\).

In the two de~Sitter cases,
\[
F=n\inn{X_a^\top}{X_a^\top}\geq0.
\]
Suppose that \(n|h|^2-H^2>0\) on a nonempty open set \(U\) and that
\(F=0\) on \(U\).
Then \(X_a^\top=0\) on \(U\), so \(X_a=\rho \nu\) for a nonvanishing function \(\rho\). 
Since \(\overline\nabla_VX_a=-\inn{x}{a}V\), for tangent vector fields
\(V,W\) on \(U\) we obtain
\[
-\inn{x}{a}\inn{V}{W}=
\inn{\overline\nabla_V X_a}{W} =\inn{\rho \overline\nabla_V\nu}{W}= \rho h(V,W).
\]
Thus, \(\Sigma\) is umbilical on \(U\), contradicting
\(n|h|^2-H^2>0\) there. Consequently,
\((n|h|^2-H^2)F\) is positive somewhere whenever
\(n|h|^2-H^2\not\equiv0\), and hence its integral is strictly positive.
\end{proof}

\begin{proof}[Proof of Theorem~\ref{thm:intro-criterion}]
If \(\Sigma\) were not totally umbilical,
Theorem~\ref{thm:criterion} would provide an admissible mean-zero test
function with positive second variation, contradicting stability.
\end{proof}

\subsection{Stability of totally umbilical caps}

Let \(\mathcal P_A\subset\mathbb Q_1^{n+1}(c)\) be the totally umbilical
support introduced above. Suppose that
\(\Sigma\subset\mathbb Q_1^{n+1}(c)\) is compact, connected, and totally
umbilical, with nonempty boundary
\(\partial\Sigma\subset\mathcal P_A\). By the classification of totally
umbilical hypersurfaces in Lorentzian space forms~\cite{Sato21},
\(\Sigma\) is capillary and isometric to a closed geodesic ball \(B_R^K\)
in the \(n\)-dimensional Riemannian space form of curvature \(K\).
We call such \(\Sigma\) a totally umbilical cap.
The main theorem establishes that every stable capillary hypersurface is
a totally umbilical cap. The following proposition proves the converse.

\begin{proposition}
Let
\(\Sigma^n\subset\mathbb{Q}^{n+1}_1(c)\)
be a compact connected spacelike totally umbilical capillary
hypersurface whose boundary lies on a spacelike totally umbilical
support \(\mathcal{P}_A\) and meets \(\mathcal{P}_A\) at a constant angle
\(\gamma>0\). Then \(\Sigma\) is stable.
\end{proposition}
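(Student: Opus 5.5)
The plan is to reduce stability of an umbilical cap to a Robin eigenvalue inequality on a geodesic ball, and then prove that inequality by separating variables.

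\textbf{Step 1: reduction to the intrinsic geometry of the cap.} Since \(\Sigma\) is totally umbilical, we have \(h=\frac Hn g\) and \(|h|^2=H^2/n\). The Gauss equation for a spacelike hypersurface in \(\mathbb Q^{n+1}_1(c)\) gives constant intrinsic curvature \(K=c-H^2/n^2\). Hence the Jacobi operator becomes \(J=\Delta-|h|^2+cn=\Delta+nK\). On the boundary, \(h(\mu,\mu)=H/n\). Comparing \eqref{eq:qA} with \eqref{eq:tildeh} (using \(h^{\mathcal P_A}=\kappa_A g\) and \(h=\frac Hn g\)) shows that \(\widetilde h=q_A\,g\) on \(T\partial\Sigma\). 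Thus \(q_A\) is exactly the (constant) normal curvature of the geodesic sphere \(\partial B^K_R\) with respect to the outward conormal \(\mu\); in the standard notation, \(q_A=\operatorname{sn}_K'(R)/\operatorname{sn}_K(R)\). Integrating by parts in \eqref{eq:second-variation} gives
\[
Q(f,f)=-\int_\Sigma|\nabla f|^2\,d\Sigma+nK\int_\Sigma f^2\,d\Sigma+q_A\int_{\partial\Sigma}f^2\,dS .
\]
Stability is therefore equivalent to the inequality
\[
\int_{B_R^K}|\nabla f|^2-q_A\int_{\partial B_R^K}f^2\ \ge\ nK\int_{B_R^K}f^2
\qquad\text{for all } f \text{ with } \int f=0,
\]
which is a statement purely about the Robin problem \(\nabla_\mu f=q_Af\) on the geodesic ball.

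\textbf{Step 2: separation of variables.} Use geodesic polar coordinates \((r,\theta)\) about the center of \(B_R^K\), and expand \(f=\sum_k f_k(r)Y_k(\theta)\) in spherical harmonics on \(\mathbb S^{n-1}\). Both sides of the inequality split orthogonally over the modes, and only the \(k=0\) (radial) mode feels the mean-zero constraint.

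For \(k=1\), the functions \(\operatorname{sn}_K(r)Y_1(\theta)\) are restrictions of ambient coordinate-type functions. They satisfy \(\Delta f=-nKf\) and \(\nabla_\mu f=q_Af\), since \(\partial_r\operatorname{sn}_K/\operatorname{sn}_K=q_A\) at \(r=R\). Equivalently, they are the normal components of the support-tangent conformal fields already constructed, restricted to a umbilical \(\Sigma\). From this I expect the one-dimensional Robin problem in each degree-one mode to have lowest eigenvalue exactly \(nK\). I plan to prove this by a ground-state substitution \(f_1=\operatorname{sn}_K\cdot w\), which turns the quadratic form into \(\int\operatorname{sn}_K^{2}|w'|^2\cdot(\text{positive weight})\ge0\): the boundary term cancels precisely because of the Robin coefficient.

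For \(k\ge2\), the extra angular term \(\bigl(k(k+n-2)-(n-1)\bigr)\operatorname{sn}_K^{-2}f_k^2\) is nonnegative, so these modes are handled by the degree-one estimate.

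\textbf{Step 3: the radial mode (main obstacle).} For radial \(f\) with \(\int_{B_R}f=0\), the lowest Robin eigenvalue is negative, so the mean-zero constraint must be used. I will first try writing \(f=\operatorname{sn}_K^{1-n}(\operatorname{sn}_K^{n-1}w)'\), which is the divergence of the radial field \(w\,\partial_r\). The constraint \(\int f=0\) is exactly what allows the choice \(w(0)=w(R)=0\). Substituting and integrating by parts should reduce the radial inequality to the degree-one inequality for \(w\), or to a one-dimensional Hardy-type inequality, with the boundary term vanishing because \(w(R)=0\). This mirrors the Riemannian argument of Wang--Xia for umbilical caps, with the Lorentzian sign conventions absorbed into \(K=c-H^2/n^2\).

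If this substitution does not close, the fallback is to show directly that the second Robin eigenvalue of the radial ODE exceeds \(nK\), via Sturm comparison with \(\operatorname{sn}_K'\). Once the radial case is settled, the three cases combine to give \(Q(f,f)\le0\) for every mean-zero \(f\), which is stability.
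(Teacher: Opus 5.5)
\textbf{Assessment.} Your Steps 1 and 2 are correct and agree with the paper's reduction. The paper also rewrites $-Q$ as the Robin form $\int_\Sigma(|\nabla f|^2-nKf^2)\,d\Sigma-q\int_{\partial\Sigma}f^2\,dS$ on $B_R^K$, with $q=\operatorname{sn}_K'(R)/\operatorname{sn}_K(R)$. It then simply cites Damasceno--Elbert for nonnegativity on mean-zero functions. Your degree-one ground state $\operatorname{sn}_K$ and the comparison for $k\ge2$ are fine. So everything rests on the radial mode, and that is where the proposal has a genuine gap.

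\textbf{Why the radial substitution does not close as described.} Put $s=\operatorname{sn}_K$ and $f=s^{1-n}(s^{n-1}w)'$ with $w(0)=w(R)=0$. Then $f(R)=w'(R)$, which is not zero, so the Robin term does not drop out. Use $f'=-L_1w$, where $L_1w=-s^{1-n}(s^{n-1}w')'+(n-1)s^{-2}w$ is the degree-one radial operator, together with $\int_0^R f^2s^{n-1}dr=\int_0^R wL_1w\,s^{n-1}dr$. One gets exactly
\[
I_0(f)=\int_0^R(Pw)^2s^{n-1}\,dr+nK\,I_1(w)-q\,s(R)^{n-1}w'(R)^2,\qquad P=L_1-nK,
\]
where $I_1$ is your degree-one form. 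The last term is negative for every cap with $q>0$, in particular all small caps. For $K<0$ the middle term is also nonpositive. So the radial inequality does not reduce to the degree-one inequality.

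\textbf{Why the fallback is insufficient.} A bound $\mu_2\ge nK$ on the second radial Robin eigenvalue gives nonnegativity only on the orthogonal complement of the first eigenfunction, not on $\{\int f=0\}$. One still needs a Koiso-type sign condition, such as $\int_{B_R^K}u\le0$ for the Robin solution of $-\Delta u-nKu=1$. Here, for $K\neq0$, $u=\frac{1}{nK}\bigl(\operatorname{sn}_K'(R)\operatorname{sn}_K'-1\bigr)$, and
\[
\int_{B_R^K}u=-\frac{(n+1)|\mathbb S^{n-1}|}{n^2}\int_0^R s^{n+1}dr<0 .
\]

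\textbf{How to rescue your route.} Apply a second ground-state transform. Write $w=s\eta$ and $\zeta=s^{n+1}\eta'$, so that $\zeta(0)=0$. The identity above becomes
\[
I_0(f)=\int_0^R s^{-(n+1)}\bigl(\zeta'^2+nK\zeta^2\bigr)dr-q\,s(R)^{-(n+1)}\zeta(R)^2 .
\]
Now set $\chi(r)=n\int_0^r V s\,dt$ with $V(r)=\int_0^r s^{n-1}dt$. This is what your substitution produces from the radial solutions of $\Delta f+nKf=\mathrm{const}$ (that is, from $1$ and $\operatorname{sn}_K'$ when $K\neq0$). It is positive on $(0,R]$, solves $(s^{-(n+1)}\chi')'=nK\,s^{-(n+1)}\chi$, and satisfies
\[
\frac{\chi'(R)}{\chi(R)}-q=\frac{(n+1)\int_0^R s^{n+1}dt}{s(R)\,\chi(R)}>0 .
\]
Writing $\zeta=\chi\theta$ then gives $I_0(f)\ge\int_0^R s^{-(n+1)}\chi^2\theta'^2\,dr\ge0$; the boundary term at $r=0$ vanishes because $\zeta=O(r^{n+2})$.

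With this step added, your mode decomposition becomes a self-contained proof, for every $K$ and every $R$ with $s>0$ on $(0,R]$, of the inequality the paper imports from the literature. Without it, Step 3 is not established.
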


\begin{proof}
Since both \(\Sigma\) and \(\mathcal P_A\) are totally umbilical,
we may write
\[
    h=\sigma g,
    \qquad
    h^{\mathcal P_A}
    =\kappa_A\,\overline g|_{T\mathcal P_A}.
\]
By the Codazzi equation, \(\sigma\) is constant. The Lorentzian
Gauss equation therefore gives
\[
    K=\operatorname{Sec}(\Sigma)=c-\sigma^2.
\]
Since \(|h|^2=n\sigma^2\), the Jacobi operator reduces to
\[
    J=\Delta-|h|^2+nc
      =\Delta+nK.
\]

By Proposition~\ref{prop:boundary-relations}, the second fundamental
form of \(\partial\Sigma\subset\Sigma\), with respect to the outward
unit conormal \(\mu\), is
\[
    \widetilde h
    =
    \left(
        \sigma\coth\gamma
        -\kappa_A\operatorname{csch}\gamma
    \right)g|_{T\partial\Sigma}.
\]
Thus, the Robin coefficient
\[
    q_A
    =
    \sigma\coth\gamma
    -\kappa_A\operatorname{csch}\gamma
\]
is precisely the intrinsic principal curvature of
\(\partial\Sigma\subset\Sigma\). We henceforth write \(q=q_A\).

Under the intrinsic identification
\[
    (\Sigma,g)\cong B_R^K,
\]
one has
\[
    q
    =
    \frac{\operatorname{sn}_K'(R)}
         {\operatorname{sn}_K(R)},
\]
where \(\operatorname{sn}_K\) denotes the generalized sine function of
the space form of curvature \(K\).
Using the second variation formula and integrating by parts, we obtain
\begin{equation}\label{eq:Q-intrinsic}
    -Q(f,f)
    =
    \int_\Sigma
    \left(
        |\nabla f|^2-nKf^2
    \right)d\Sigma
    -
    \frac{\operatorname{sn}_K'(R)}
         {\operatorname{sn}_K(R)}
    \int_{\partial\Sigma}f^2\,dS.
\end{equation}
Thus, \(-Q\) is the intrinsic Robin index form on the geodesic ball
\(B_R^K\) associated with
\[
\begin{cases}
    -\Delta u-nKu=\lambda u,
        &\text{in }B_R^K,\\[1mm]
    \displaystyle
    \partial_\mu u
    -\frac{\operatorname{sn}_K'(R)}
           {\operatorname{sn}_K(R)}u=0,
        &\text{on }\partial B_R^K.
\end{cases}
\]

The nonnegativity of this index form on the mean-zero space
\[
    H^1_*(B_R^K)
    :=
    \left\{
        f\in H^1(B_R^K):
        \int_{B_R^K}f\,d\Sigma=0
    \right\}
\]
is precisely the stability result proved in
\cite[Proposition~4.3 and Appendix~B]{DamascenoElbert24}.
Consequently,
\[
    -Q(f,f)\geq0
\]
for every \(f\in C^\infty(\Sigma)\) satisfying
\(\int_\Sigma f\,d\Sigma=0\), and hence \(\Sigma\) is stable.
\end{proof}

\section{Negative-curvature supports and the conformal interpretation}
\label{sec:negative-curvature}

The light-cone construction and the Jacobi--Robin identities do not require
the support to have nonnegative intrinsic curvature. Hence, they remain
valid for hyperbolic slices in de~Sitter space,
hyperboloid supports in Minkowski space, and spacelike totally
umbilical supports in anti-de~Sitter space. In these cases, however,
the finite-dimensional trace is no longer positive. We first describe
the relevant model supports and then identify the resulting signature
obstruction.

\subsection{Negative-curvature model supports}
\label{subsec:negative-supports}

\paragraph{\textbf{Hyperbolic slices in de~Sitter space}}

Let \(\mathcal P_{a,\tau}\) be a connected component of
\[
\bigl\{
x\in\mathbb S^{n+1}_1 |
\langle x,a\rangle=\tau
\bigr\},
\qquad
\langle a,a\rangle=1,
\qquad
\tau^2>1.
\]
Then
\[
\lambda_{a,\tau}=\sqrt{\tau^2-1},
\qquad
\operatorname{Sec}(\mathcal P_{a,\tau})=-\lambda_{a,\tau}^{-2}<0.
\]
For \(b\in a^\perp\), the construction of Section~\ref{sec:light_cone} gives
\[
\psi_{a,b}=\langle x,b\rangle,
\]
and the associated Jacobi--Robin test function is
\[
\varphi_{a,b,\tau}
=
n\psi_{a,b}
+
H\langle T_{a,b,\tau},\nu\rangle
-
n\lambda_{a,\tau}\cosh\gamma\langle K_{a,b},\nu\rangle.
\]
The corresponding Minkowski formula is
\[
\int_\Sigma \varphi_{a,b,\tau}\,d\Sigma=0.
\]
Since \(a\) is spacelike, \(a^\perp\) has Lorentzian signature
\((n,1)\). Thus, the associated parameter space contains one timelike
direction.

\paragraph{\textbf{Hyperboloid supports in Minkowski space}}

We next record the formula for a spacelike hyperboloid support in
\(\mathbb R^{n,1}\). Since translations of
\(\mathbb R^{n,1}\) are ambient isometries, we may assume that the
hyperboloid is centered at the origin.
Let \(\mathcal P_r\) be one connected component of
\[
\{x\in\mathbb R^{n,1}:\langle x,x\rangle=-r^2\}
\qquad r>0
\]
for which \(x/r\) is future-directed, and choose  the unit timelike normal
\[
N=\frac{x}{r}.
\]
Then \(\mathcal{P}_r\) is a spacelike totally umbilical hypersurface with principal curvature \(r^{-1}\)
and intrinsic sectional curvature
\[
\operatorname{Sec}(\mathcal P_r)=-\frac1{r^2}.
\]

In the flat light-cone model, \(\mathcal P_r=\mathcal P_A\) for
\[A=E_0^*-\frac{r^2}{2} E_0.\]
For \(k\in \mathbb{R}^{n,1}\), set \(B=k\). The associated fields are
\[\mathcal{K}_{k}= k, \qquad  
\mathcal{T}_{k}=\langle x,k\rangle  x+\frac{r^2-\langle x,x\rangle}{2}k, \qquad
\psi_{k}=\langle x,k\rangle.\]
Hence, every compact spacelike capillary hypersurface supported on \(\mathcal P_r\) satisfies
\[
\int_{\Sigma} \left[n\langle x,k\rangle +H \left\langle \langle x,k\rangle  x
+\frac{r^2-\langle x,x\rangle}{2} k,\nu 
\right\rangle -nr\cosh \gamma \langle k,\nu\rangle \right] d\Sigma=0.
\]
A hyperboloid centered at \(p\in\mathbb R^{n,1}\) is obtained by
replacing \(x\) with \(x-p\).

\paragraph{\textbf{Spacelike totally umbilical supports in anti-de~Sitter
space.}}

Let 
\[
\operatorname{AdS}^{n+1}
=
\bigl\{
x\in\mathbb R^{n,2}:
\langle x,x\rangle=-1
\bigr\}
\]
and let \(\mathcal{P}_{a,\tau}\) be one connected component of 
\[
\big\{x\in \operatorname{AdS}^{n+1}|\langle x,a\rangle=\tau\big\}.
\]
The support is spacelike precisely when
\[
\tau^2<-\langle a,a\rangle.
\]
Set
\[\lambda_{a,\tau}=\sqrt{-\langle a,a\rangle-\tau^2}.\]
With the convention induced by the light-cone model, its unit timelike
normal and principal curvature are
\[
N_{a,\tau}
=
-\frac{a+\tau x}{\lambda_{a,\tau}}, \qquad 
\kappa_{a,\tau}
=
-\frac{\tau}{\lambda_{a,\tau}}.
\]
Consequently,
\[
\operatorname{Sec}(\mathcal{P}_{a,\tau})
=
-1-\kappa_{a,\tau}^2
=\frac{ \langle a, a \rangle}{\lambda_{a,\tau}^2}<0.
\]

For \(b\in \mathbb{R}^{n,2}\), put
\[
X_a:
=
a+\langle x,a\rangle x,
\qquad
X_b:
=
b+\langle x,b\rangle  x,
\]
\[
K_{a,b}:
=-\langle x,a\rangle b+\langle x,b\rangle  a,
\]
and
\[
    T_{a,b,\tau}\coloneqq \mathcal T_{A,B}
    =\langle a,b\rangle X_a-\langle a,a\rangle X_b+\tau K_{a,b}.
\]
The corresponding conformal factor is 
\[
\psi_{a,b}
=
-\langle a\wedge x,a\wedge b\rangle.
\]
Thus, the unified Minkowski-type formula \eqref{eq:Minkowski-general} gives
\[
\int_\Sigma
\left(
-n\langle a\wedge x,a\wedge b\rangle
+
H\langle T_{a,b,\tau},\nu\rangle
-
n\lambda_{a,\tau}\cosh\gamma\,
\langle K_{a,b},\nu\rangle
\right)d\Sigma
=
0.
\]

\begin{remark}
When \(n\ge2\), every connected spacelike totally umbilical
hypersurface in \(\operatorname{AdS}^{n+1}\) is, up to restriction to
an open subset and reversal of the unit normal, of this form. In
particular, all such supports have negative intrinsic sectional
curvature.
\end{remark}

\subsection{The signature obstruction}
\label{subsec:signature-obstruction}

Let
\[
\Pi_A=\operatorname{span}\{A,E_c\}
\]
be the support two-plane. If \(\operatorname{Sec}(\mathcal P_A)<0\), then 
Proposition~\ref{prop:test-signature} gives
\[
\operatorname{sign}(\Pi_A)=(1,1),
\qquad
\operatorname{sign}(\Pi_A^\perp)=(n,1).
\]
Since the test construction vanishes on \(\Pi_A\), the natural
parameter space is
\[
\mathbb V/\Pi_A\simeq\Pi_A^\perp.
\]
It therefore contains exactly one timelike direction.

This is precisely the obstruction encountered by the finite-trace method.
The Minkowski-type formula, the mean-zero condition, and the Jacobi--Robin
identities remain valid. However, the invariant trace over
\(\Pi_A^\perp\) has one term with the opposite sign, and stability
alone does not control this contribution. Thus, the essential issue is
not the construction of additional test functions but the control of
the unique timelike mode.

\begin{problem}\label{prob:negative-curvature}
For a spacelike totally umbilical support with
\(\operatorname{Sec}(\mathcal P_A)<0\), determine whether the timelike
direction in the finite Jacobi--Robin test module can be controlled by an
additional Green identity, a geometric sign condition, or a spectral
constraint. The principal cases are:
\begin{enumerate}[label=\textup{(\roman*)}]
\item hyperbolic-slice supports in de~Sitter space;
\item hyperboloid supports in Minkowski space;
\item spacelike totally umbilical supports in anti-de~Sitter space.
\end{enumerate}
\end{problem}

\subsection{Conformal interpretation and the Riemannian real form}
\label{subsec:conformal-interpretation}

We conclude by recording the conformal interpretation of the preceding
construction. We use the following terminology only as a compact
reformulation of the ambient-vector construction; no tractor calculus is
required in the proofs above.

In the flat conformal model, \(\mathbb V\) is the standard tractor vector
space, and a constant vector \(A\in\mathbb V\) represents a parallel standard
tractor. Its scale component in the chosen space-form metric is
\[
s_A(\xi)=\langle \xi,A\rangle_{\mathbb V},
\]
which, in the chosen space-form scale, satisfies
\[
\overline\nabla^2 s_A=-\rho_A\overline g,
\qquad
d\rho_A=c\,ds_A.
\]
In this subsection, an almost Einstein scale means a function \(s\) for which
there exists a function \(\rho\) satisfying
\[
\overline\nabla^2s=-\rho\,\overline g,
\qquad
d\rho=c\,ds.
\]
Thus, \(s_A\) is both a special concircular potential and an almost Einstein
scale in this sense. The regular zero set
\[
\mathcal P_A=\{s_A=0\}
\]
is what we call the conformal hypersphere associated with \(A\). In the chosen
space-form metric it is a totally umbilical hypersurface. The fields \(\mathcal K_{A,B}\) and
\(\mathcal T_{A,B}\) arise from the algebra generated by
parallel tractors \(A\) and \(B\). In particular, the test functions used above
are canonically associated with a finite-dimensional tractor space.

The same construction admits an intrinsic metric formulation. 
Let
\((\overline M^{n+1},\overline g)\) be an Einstein manifold with
\[
\operatorname{Ric}_{\overline g}=nc\,\overline g,
\]
and let \(s,t\in C^\infty(\overline M)\) be special concircular
potentials satisfying
\[
\overline\nabla^2s=-\rho_s\overline g,\qquad d\rho_s=c\,ds,
\]
\[
\overline\nabla^2t=-\rho_t\overline g,\qquad d\rho_t=c\,dt.
\]
Thus,
\[
\rho_s=m_s+cs,\qquad \rho_t=m_t+ct
\]
for constants \(m_s,m_t\). Define a pairing
\[
I(s,t)
=
\overline g(\overline\nabla s,\overline\nabla t)
+
\rho_s t+\rho_t s-cst.
\]
This pairing is constant by the defining equations for \(s\) and \(t\).
Assume that the support
\[
\mathcal P_s=\{s=0\},
\]
is a regular hypersurface and \(\overline\nabla s\) is timelike along \(\mathcal P_s\).
Set
\[
\lambda_s^2=-I(s,s)>0,
\qquad
N=-\lambda_s^{-1}\overline\nabla s.
\]
Then \(\mathcal P_s\) is spacelike and totally umbilical, with principal
curvature
\(
\kappa_s={m_s}/{\lambda_s}.
\)

For such \(s,t\), let
\[
X_s:=\overline\nabla s,\qquad X_t:=\overline\nabla t, \qquad \mathfrak C_{s,t}:=sX_t-tX_s,
\]
and define
\begin{align*}
&\mathcal K_{s,t}:=\rho_sX_t-\rho_tX_s,\\
&\mathcal T_{s,t}:=I(s,t)X_s-I(s,s)X_t + m_s\mathfrak C_{s,t},\\
&\psi_{s,t} := I(s,s)\rho_t-I(s,t)\rho_s + m_s(t\rho_s-s\rho_t).
\end{align*}
A direct computation gives that \(\mathcal K_{s,t}\) is Killing and
\(\mathcal T_{s,t}\) is conformal Killing with conformal factor
\(\psi_{s,t}\):
\[
\mathcal L_{\mathcal T_{s,t}}\overline g=2\psi_{s,t}\overline g.
\]
Moreover,
\[
\mathcal T_{s,t}=\lambda_s^2(X_t)^{\mathcal P_s}
\quad\text{on }\mathcal P_s,
\]
and hence \(\mathcal T_{s,t}\) is tangent to the support.

Consequently, if
\(\Sigma^n\subset \overline M^{n+1}\) is a compact spacelike capillary
hypersurface with
\[
\partial\Sigma\subset\mathcal P_s,
\qquad
\overline g(\nu,N)=-\cosh\gamma,
\]
then the same divergence argument gives the intrinsic Minkowski-type
formula
\[
\int_\Sigma
\left(
n\psi_{s,t}
+
H\overline g(\mathcal T_{s,t},\nu)
-
n\lambda_s\cosh\gamma\,
\overline g(\mathcal K_{s,t},\nu)
\right)d\Sigma
=0.
\]
In the flat conformal model, \(s=s_A\) and \(t=s_B\) are precisely the
scale components of the parallel tractors \(A\) and \(B\), and this
formula reduces to the light-cone formula proved above.

This Minkowski-type identity alone does not imply rigidity. The stability
argument also requires a sufficiently large finite-dimensional space of
such potentials and a trace of the stability form with a favorable sign.

The Riemannian and Lorentzian constructions are two forms of the
same tractor-algebraic mechanism. In the Riemannian case, the conformal
ambient space is the light cone in \(\mathbb R^{n+2,1}\), whereas in the Lorentzian
case it is modeled in \(\mathbb R^{n+1,2}\). The formal expressions for
\(\mathcal K_{A,B}\), \(\mathcal T_{A,B}\), \(\psi_{A,B}\), \(\varphi_{A,B}\)
and the Jacobi--Robin test package are essentially the same;
the difference lies in the signature of the ambient metric.

\begin{remark}
It is instructive to compare these results with the corresponding
Riemannian theory. To the best of our knowledge, the case of totally umbilical support hypersurfaces
with negative intrinsic curvature remains open in the present Lorentzian
setting. In the Riemannian analogue, the corresponding cases are
equidistant and totally geodesic hypersurfaces in hyperbolic space,
which likewise remain open.
\end{remark}

\appendix

\section{Second variation of the capillary energy}
\label{sec:second-var-unified}

We derive the second-variation formula using the Lorentzian conventions
of this paper. In particular, the signs are verified directly rather than
transferred from a Riemannian second-variation formula.

Recall that \((\overline M^{n+1}(c),\overline g)\) is a Lorentzian space form
of constant sectional curvature \(c\). Let \(\mathcal P\subset \overline M\)
be a spacelike totally umbilical hypersurface with unit timelike normal
\(N\) and second fundamental form
\[
h^{\mathcal{P}}(X,Y)=\kappa\,\overline g(X,Y)\qquad (X,Y\in T\mathcal{P}),
\]
where \(\kappa\) is a constant.

Let \(x:\Sigma^n\to \overline M^{n+1}(c)\) be a compact spacelike immersion with boundary \(\partial\Sigma\subset\mathcal{P}\).
The unit normals and conormals are related by
\begin{gather*}
    \nu = \cosh\gamma\,N + \sinh\gamma\,\overline{\nu}  \quad \text{ and } \quad
    \mu = \sinh\gamma\,N + \cosh\gamma\,\overline{\nu}\\
     N = \cosh\gamma\,\nu - \sinh\gamma\,\mu  \quad \text{ and } \quad
    \overline{\nu}=-\sinh\gamma\,\nu+\cosh\gamma\,\mu.
\end{gather*}
In the sequel, \(\overline\nabla\) denotes the Levi--Civita connection of
\((\overline M,\overline g)\), while \(\nabla=\nabla^\Sigma\),
\(\nabla^{\mathcal P}\), and \(\nabla^\partial\) denote the induced
connections on \(\Sigma\), \(\mathcal P\), and \(\partial\Sigma\),
respectively. We write \(g=\overline g|_{T\Sigma}\) for the induced
Riemannian metric. Ambient inner products involving \(\nu\), \(N\), \(Y\),
or other vectors in \(T\overline M\) are always written with \(\overline g\).

Assume that \(H\) and \(\gamma\) are constant, so that \(\Sigma\) is a
critical point of the functional
\[
    \mathcal E(t)=\mathcal A(t)-\cosh\gamma\,\mathcal W(t).
\]
Let \(x:(-\varepsilon,\varepsilon)\times\Sigma\to \overline M\) be an admissible
variation with \(x_t(\partial\Sigma)\subset\mathcal P\), and set
\[
Y_t:=\frac{\partial x}{\partial t},
\qquad
Y:=Y_0.
\]
We use the \(t\)-dependent normal and conormal fields
\(\nu_t,\mu_t,N_t\), and \(\overline\nu_t\). Their variations are written
directly as ambient covariant derivatives, for example
\[
\left.\overline\nabla_{\frac d{dt}}\nu_t\right|_{t=0},
\qquad
\left.\overline\nabla_{\frac d{dt}}\mu_t\right|_{t=0},
\qquad
\left.\overline\nabla_{\frac d{dt}}Y_t\right|_{t=0}.
\]
Write
\[
    Y=f\nu+Y^\top,
\]
where \(f=-\overline g(Y,\nu)\) and \(Y^\top\) is tangent to \(\Sigma\).
Along \(\partial\Sigma\), using the fact that \(\overline g(Y,N)=0\),
we can further write
\[
    Y=f\nu-f\coth\gamma\,\mu+Y^{\partial\Sigma},
\]
where \(Y^{\partial\Sigma}\) is tangent to \(\partial\Sigma\).

\begin{lemma}\label{pointvariation}
    Consider the following shape operators:
\begin{itemize}
    \item \(S_0\) is the shape operator of \(\Sigma\) with respect to
        \(\nu\), defined by
        \(S_0(X)=\overline\nabla_X\nu\) for \(X\in T\Sigma\);
    \item \(S_1\) is the shape operator of \(\partial\Sigma\) in
        \(\Sigma\) with respect to the outward conormal \(\mu\), defined by
        \(S_1(X)=\nabla^\Sigma_X\mu\) for \(X\in T(\partial\Sigma)\);
    \item \(S_2\) is the shape operator of \(\partial\Sigma\) in
        \(\mathcal P\) with respect to \(\overline\nu\), defined by
        \(S_2(X)=\nabla^{\mathcal P}_X\overline\nu\) for
        \(X\in T(\partial\Sigma)\).
\end{itemize}
    Then
\begin{enumerate}
    \item  $\displaystyle\left.\overline{\nabla}_{\frac d{dt}}\nu_t\right|_{t=0} = \nabla f + S_{0}(Y^{\top}),$
    \item  $\displaystyle\left.\overline{\nabla}_{\frac d{dt}}\mu_t\right|_{t=0} = \bigl(\nabla_{\mu} f +h(Y^{\top},\mu)\bigr)\nu - f S_0(\mu) + fh(\mu,\mu)\mu + S_1(Y^{\partial\Sigma}) + \coth\gamma\, \nabla^{\partial} f,$
    \item  $\displaystyle\left.\overline{\nabla}_{\frac d{dt}}\overline\nu_t\right|_{t=0} = h^{\mathcal{P}}(Y,\overline\nu) N + S_{2}(Y^{\partial\Sigma}) + \frac{1}{\sinh\gamma}  \nabla^{\partial} f.$
\end{enumerate}
\end{lemma}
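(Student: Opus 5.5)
The plan is to compute each variation by differentiating the defining orthogonality and normalization relations of the moving frame. For each field I determine all of its components: along $\nu$, along $\mu$ (or $N$, $\overline\nu$), and along $T(\partial\Sigma)$ or $T\Sigma$. Throughout I use that $\overline\nabla_{d/dt}$ commutes with pushforwards of coordinate fields, $\overline\nabla_{d/dt}\,dx_t(X)=\overline\nabla_X Y_t$, since $[\partial_t,X]=0$ on $(-\varepsilon,\varepsilon)\times\Sigma$.

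\textbf{Item (1).} Differentiating $\overline g(\nu_t,\nu_t)=-1$ shows that $\overline\nabla_{d/dt}\nu_t$ is tangent to $\Sigma$. For $X\in T\Sigma$, differentiating $\overline g(\nu_t,dx_t(X))=0$ gives
\[
\overline g(\overline\nabla_{d/dt}\nu,X)=-\overline g(\nu,\overline\nabla_XY).
\]
Substituting $Y=f\nu+Y^\top$ yields
\[
-\overline g(\nu,\overline\nabla_XY)=X(f)-\overline g(\nu,\overline\nabla_XY^\top)=X(f)+h(X,Y^\top),
\]
where we used $\overline g(\nu,\nu)=-1$, $f=-\overline g(Y,\nu)$, and $\overline g(\overline\nabla_X\nu,\nu)=0$. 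The part $X(f)$ identifies with $g(\nabla f,X)$, and $h(X,Y^\top)=g(S_0Y^\top,X)$.

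\textbf{Item (2).} For $\mu_t$, which is orthogonal to $T(\partial\Sigma_t)$ and $\nu_t$ and has unit length, I read off three components.
\begin{itemize}
\item The $\mu$-component vanishes by normalization.
\item The $\nu$-component equals $-\overline g(\overline\nabla_{d/dt}\mu,\nu)=\overline g(\mu,\overline\nabla_{d/dt}\nu)=\nabla_\mu f+h(Y^\top,\mu)$, using item (1) and the sign convention $\overline g(\nu,\nu)=-1$.
\item The $T(\partial\Sigma)$-component, for $e\in T\partial\Sigma$, is $-\overline g(\mu,\overline\nabla_eY)$.
\end{itemize}
For the last, I insert the boundary decomposition $Y=f\nu-f\coth\gamma\,\mu+Y^{\partial\Sigma}$ and expand:
\[
-\overline g(\mu,\overline\nabla_eY)=-f\,h(e,\mu)+\coth\gamma\,e(f)+\widetilde h(e,Y^{\partial\Sigma}).
\]
Here $\overline g(\mu,\overline\nabla_e\nu)=h(e,\mu)$, which vanishes by Proposition~\ref{prop:mu-principal} but is kept as $-fS_0(\mu)$. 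We also use $\overline g(\mu,\overline\nabla_e\mu)=0$ and $-\overline g(\mu,\overline\nabla_eY^{\partial\Sigma})=\overline g(\overline\nabla_e\mu,Y^{\partial\Sigma})$. Assembling, the $-fS_0(\mu)+fh(\mu,\mu)\mu$ combination is exactly the tangential-to-$\partial\Sigma$ projection of $-fS_0(\mu)$, written so as to remove the spurious $\mu$-part. Hence these terms together reproduce the stated formula. I will double-check the $\mu$-component bookkeeping, since $S_0(\mu)=h(\mu,\mu)\mu$ on $\partial\Sigma$ by \eqref{eq:Dnu}.

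\textbf{Item (3).} The computation for $\overline\nu_t$ is analogous, using the frame $(N,\overline\nu,T\partial\Sigma)$, where $N_t=N\circ x_t$ is evaluated along the support.
\begin{itemize}
\item The $N$-component equals $\overline g(\overline\nu,\overline\nabla_{d/dt}N)=\overline g(\overline\nu,\overline\nabla_YN)=h^{\mathcal P}(Y,\overline\nu)$, since $Y$ is tangent to $\mathcal P$ on the boundary.
\item The $T(\partial\Sigma)$-component is $-\overline g(\overline\nu,\overline\nabla_eY)$ with $Y=f\nu-f\coth\gamma\,\mu+Y^{\partial\Sigma}$.
\end{itemize}
In the second item, $\overline g(\overline\nu,\nu)=-\sinh\gamma$ and $\overline g(\overline\nu,\mu)=\cosh\gamma$, so the derivative-of-$f$ terms give
\[
e(f)\bigl(\sinh\gamma+\cosh\gamma\coth\gamma\bigr)=\frac{e(f)}{\sinh\gamma}.
\]
The remaining terms of the form $f\,\overline g(\overline\nu,\overline\nabla_e\nu)$ and $f\,\overline g(\overline\nu,\overline\nabla_e\mu)$ must cancel. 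Expanding in the $(N,\overline\nu)$ frame, they reduce to multiples of $h(e,\mu)$ and $h^{\mathcal P}(e,\overline\nu)$, both of which vanish by umbilicity and Proposition~\ref{prop:mu-principal}. The $Y^{\partial\Sigma}$ term gives $S_2(Y^{\partial\Sigma})$.

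\textbf{Main obstacle.} I expect the hardest part to be the careful Lorentzian sign bookkeeping in item (2), in particular the $\nu$-coefficient (sign from $\overline g(\nu,\nu)=-1$) and the cancellation of the $f$-terms in item (3). I would verify these by computing all components against the inverse frame relations \eqref{eq:boundary-frame-inverse}.
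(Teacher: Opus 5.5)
Your approach is the same as the paper's: differentiate the orthogonality and normalization relations of the moving frames and read off components. Items (1) and (2) are correct as written.

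There is one concrete error, in item (3). From $\overline\nu=-\sinh\gamma\,\nu+\cosh\gamma\,\mu$ and $\overline g(\nu,\nu)=-1$ one gets $\overline g(\overline\nu,\nu)=+\sinh\gamma$, not $-\sinh\gamma$. With your sign, the first-order terms would be $e(f)(\sinh\gamma+\cosh\gamma\coth\gamma)=e(f)\cosh(2\gamma)/\sinh\gamma$, so the equality you wrote with $e(f)/\sinh\gamma$ is false. With the correct sign they are $e(f)(\cosh\gamma\coth\gamma-\sinh\gamma)=e(f)/\sinh\gamma$, as required. The method is fine; only this line needs fixing.

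The paper avoids this bookkeeping by noting that $f\nu-f\coth\gamma\,\mu=-\frac{f}{\sinh\gamma}\,\overline\nu$. Hence on $\partial\Sigma$ one has $Y=Y^{\partial\Sigma}-\frac{f}{\sinh\gamma}\,\overline\nu$. Then $-\overline g(\overline\nu,\overline\nabla_eY)$ gives $\frac{e(f)}{\sinh\gamma}+g(S_2(Y^{\partial\Sigma}),e)$ in one line. The zeroth-order terms drop out simply because $\overline g(\overline\nu,\overline\nabla_e\overline\nu)=0$.

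In your $(\nu,\mu)$ decomposition, those zeroth-order terms are $-f\cosh\gamma\,h(e,\mu)$ and $+f\cosh\gamma\,h(e,\mu)$, which cancel identically. So the appeal to Proposition~\ref{prop:mu-principal} at that point is harmless but unnecessary.
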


\begin{proof}
    To prove (1), let \(X\in T\Sigma\) be arbitrary. Differentiating the
    identity
    \[
        \overline g(\nu,X)=0
    \]
    with respect to \(t\) yields
    \[
        \overline g\left(\overline{\nabla}_{\frac d{dt}}\nu_t,X\right)
        +\overline g\left(\nu,\overline{\nabla}_{\frac d{dt}}X\right)=0.
    \]
    Thus,
    \begin{align*}
        \overline g\left(\overline{\nabla}_{\frac d{dt}}\nu_t,X\right)
        &=-\overline g\left(\nu,\overline{\nabla}_{\frac d{dt}}X\right)
        =-\overline g(\nu,\overline{\nabla}_XY) \\
        &=-\overline g(\nu,\overline{\nabla}_X(f\nu+Y^\top))
        = Xf+g(\overline{\nabla}_X\nu, Y^\top) \\
        & = g(\nabla f,X)+g(S_0(Y^\top),X).
    \end{align*}
    Moreover, \(\overline g(\nu,\nu)=-1\) gives
    \[
    \overline g\left(\overline{\nabla}_{\frac d{dt}}\nu_t,\nu\right)=0.
    \]
    This proves (1).

    Arguing as in the proof of (1), we obtain
    \begin{align*}
        \overline g\left(\overline{\nabla}_{\frac d{dt}}\mu_t,X\right)
        &=-\overline g\left(\mu,\overline{\nabla}_X(f\nu-f\coth\gamma\,\mu+Y^{\partial\Sigma})\right) \\
        &=-f\overline g(\mu,\overline{\nabla}_X\nu)+\coth\gamma\, g(\nabla^\partial f,X)+g(S_1(Y^{\partial\Sigma}),X) \\
        & =g(\coth\gamma\nabla^\partial f+S_1(Y^{\partial\Sigma})-fS_0(\mu),X),
    \end{align*}
    for \(X\in T\partial\Sigma\),
    \begin{align*}
        \overline g\left(\overline{\nabla}_{\frac d{dt}}\mu_t,\nu\right)
        &=-\overline g\left(\mu,\overline{\nabla}_{\frac d{dt}}\nu_t\right) \\
        &=-g(\mu,\nabla f+S_0(Y^\top)) \\
        & =-\mu(f)-h(Y^\top,\mu),
    \end{align*}
    and
    \(\overline g(\overline{\nabla}_{\frac d{dt}}\mu_t,\mu)=0\).
    Thus,
    \[
        \left.\overline{\nabla}_{\frac d{dt}}\mu_t\right|_{t=0}
        =\bigl(\nabla_{\mu} f +h(Y^{\top},\mu)\bigr)\nu
        -fS_0(\mu)+fh(\mu,\mu)\mu
        +S_1(Y^{\partial\Sigma})+\coth\gamma\nabla^\partial f.
    \]

    For (3),
    \begin{align*}
        \overline g\left(\overline{\nabla}_{\frac d{dt}}\overline\nu_t,X\right)
        &=-\overline g\parens*{\overline{\nu},\overline{\nabla}_X\parens*{-\frac{f}{\sinh\gamma}\overline{\nu}+Y^{\partial\Sigma}}} \\
        & =g\parens*{S_2(Y^{\partial\Sigma})+\frac{1}{\sinh\gamma}\nabla^\partial f,X},
    \end{align*}
    for \(X\in T\partial\Sigma\),
    \[
        \overline g\left(\overline{\nabla}_{\frac d{dt}}\overline\nu_t,N\right)
        =-\overline g(\overline\nu,\overline\nabla_YN)
        =-h^{\mathcal P}(Y,\overline\nu)
        \quad \text{and} \quad
        \overline g\left(\overline{\nabla}_{\frac d{dt}}\overline\nu_t,\overline\nu\right)=0.
    \]
    Thus,
    \[
        \left.\overline{\nabla}_{\frac d{dt}}\overline\nu_t\right|_{t=0}
        =h^{\mathcal P}(Y,\overline\nu)N+S_{2}(Y^{\partial\Sigma})
        +\frac{1}{\sinh\gamma}\nabla^{\partial}f. \qedhere
    \]
\end{proof}

\begin{theorem}[Second variation of the capillary energy]
\label{thm:second-var-unified}
Let \(x:\Sigma\to \overline M\) be a capillary immersion, and let
\(x_t\) be a volume-preserving admissible variation. Then
\begin{equation}\label{eq:second-var-unified}
\mathcal E''(0)
=\int_{\Sigma}f\,Jf\,d\Sigma
-\int_{\partial\Sigma}f\bigl(\nabla_{\mu}f-qf\bigr)\,dS,
\end{equation}
where \(J=\Delta-|h|^{2}-\overline{\operatorname{Ric}}(\nu,\nu)\) is the Jacobi operator of \(\Sigma\) and
\[
q=\coth\gamma\,h(\mu,\mu)
-\frac{1}{\sinh\gamma}\,h^{\mathcal P}(\overline\nu,\overline\nu).
\]

\end{theorem}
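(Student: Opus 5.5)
Differentiate the first variation formula of Lemma~\ref{lem:first-var-unified} once more at $t=0$, using that $\Sigma$ is capillary ($H$ constant, $\overline g(\nu,N)=-\cosh\gamma$). The proof splits into an interior term and a boundary term.

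\textbf{Interior term.} Write
\[
\mathcal E'(t)=\int_{\Sigma_t}H(t)f_t\,d\Sigma_t+\int_{\partial\Sigma_t}\overline g(Y_t,\mu_t-\cosh\gamma\,\overline\nu_t)\,dS_t.
\]
Differentiating the interior term gives
\[
\int_\Sigma H'(0)f\,d\Sigma+H\,\frac{d}{dt}\Big|_{t=0}\int_{\Sigma_t}f_t\,d\Sigma_t .
\]
Volume preservation gives $\int_{\Sigma_t}f_t\,d\Sigma_t\equiv0$, so the second summand vanishes because $H$ is constant.

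For $H'(0)$, use the standard formula for the variation of mean curvature, already quoted in \eqref{eq:HtCK}, with the sign conventions fixed there. With $f=-\overline g(Y,\nu)$ and $\nabla^\Sigma H=0$, it gives $H'(0)=Jf$. Before committing, check this sign against \eqref{eq:HtCK}: there the normal speed is $\overline g(X,\nu)=-f$ and the derivative is $-J\overline g(X,\nu)$, which equals $Jf$. The interior term is therefore $\int_\Sigma fJf\,d\Sigma$.

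\textbf{Boundary term: reduction.} Write $\Theta_t:=\mu_t-\cosh\gamma\,\overline\nu_t$. At $t=0$, \eqref{eq:boundary-frame-inverse} gives
\[
\Theta=\mu-\cosh\gamma\,\overline\nu=\sinh\gamma\,N.
\]
Since $Y$ is tangent to $\mathcal P$, we have $\overline g(Y_t,\Theta_t)\equiv0$ at $t=0$. Hence the derivatives of $Y_t$ and of the boundary measure contribute nothing, and the boundary term of $\mathcal E''(0)$ reduces to
\[
\int_{\partial\Sigma}\overline g\Bigl(Y,\ \overline\nabla_{\frac d{dt}}\mu_t-\cosh\gamma\,\overline\nabla_{\frac d{dt}}\overline\nu_t\Bigr)dS .
\]

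\textbf{Boundary term: computation.} Substitute Lemma~\ref{pointvariation}(2),(3) and pair with $Y=f\nu-f\coth\gamma\,\mu+Y^{\partial\Sigma}$.

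The $\nu$-component of $\overline\nabla_t\mu_t$ pairs with $f\nu$ and gives $-f\bigl(\nabla_\mu f+h(Y^\top,\mu)\bigr)$. The term $h(Y^\top,\mu)$ vanishes: $Y^\top=-f\coth\gamma\,\mu+Y^{\partial\Sigma}$, and Proposition~\ref{prop:mu-principal} kills the $Y^{\partial\Sigma}$ part. What remains of that piece is $f^2\coth\gamma\,h(\mu,\mu)$, which has to be tracked carefully together with the other $h(\mu,\mu)$ contributions.

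The terms $-fS_0(\mu)+fh(\mu,\mu)\mu$ pair with $-f\coth\gamma\,\mu$ and cancel in the $\mu$ direction, again by Proposition~\ref{prop:mu-principal}.

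The term $\cosh\gamma\,h^{\mathcal P}(Y,\overline\nu)N$ pairs with $Y$ to give zero, since $\overline g(Y,N)=0$.

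\textbf{Tangential terms.} The tangential pieces
\[
\overline g\bigl(Y^{\partial\Sigma},\,S_1Y^{\partial\Sigma}-\cosh\gamma\,S_2Y^{\partial\Sigma}\bigr)
\quad\text{and}\quad
\overline g\bigl(Y^{\partial\Sigma},\,\coth\gamma\,\nabla^\partial f-\coth\gamma\,\nabla^\partial f\bigr)
\]
must be handled. The gradient terms cancel exactly, since $\cosh\gamma/\sinh\gamma=\coth\gamma$.

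For the shape-operator terms, \eqref{eq:tildeh} together with the relation between $S_2$ and $h$, obtained as in the proof of Proposition~\ref{prop:boundary-relations}, shows that
\[
S_1-\cosh\gamma\,S_2=\sinh\gamma\,\bigl(h^{\mathcal P}|_{T\partial\Sigma}\ \text{part}\bigr)\ \text{plus multiples of } h|_{T\partial\Sigma}.
\]
I expect these to combine into a term that is a tangential divergence along $\partial\Sigma$, or to vanish because $\mathcal P$ is umbilical. If they do not cancel identically, I will remove the $Y^{\partial\Sigma}$ dependence by reparametrizing: the second variation at a critical point depends only on $f$ once one adds a tangential diffeomorphism of $\Sigma$ preserving $\partial\Sigma$. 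So one may assume $Y^{\partial\Sigma}=0$ from the start.

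\textbf{Remaining coefficient and main obstacle.} The last step is to express the leftover $f^2$ coefficient on $\partial\Sigma$ through $h(\mu,\mu)$ and $h^{\mathcal P}(\overline\nu,\overline\nu)$. These come from the normal parts $\overline g(\overline\nabla_t\mu_t,\mu)$ and the $N$-component of $\overline\nabla_t\overline\nu_t$, which sees $h^{\mathcal P}(Y,\overline\nu)=-\frac{f}{\sinh\gamma}h^{\mathcal P}(\overline\nu,\overline\nu)$. The target is the stated $q$.

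The main obstacle is this sign and coefficient bookkeeping. The Lorentzian signs ($\overline g(\nu,\nu)=-1$, hyperbolic rotations) make it easy to misplace a factor such as $\cosh^2\gamma-\sinh^2\gamma$. I will verify the final coefficient on a totally umbilical model cap, where $q$ must equal the boundary principal curvature computed in \eqref{eq:tildeh}.
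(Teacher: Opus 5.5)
\textbf{There is a genuine gap in your boundary reduction.} Your interior term is fine. The problem is the inference from $\overline g(Y,\mu-\cosh\gamma\,\overline\nu)=0$ at $t=0$.

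That vanishing only lets you drop the term carrying $\frac{d}{dt}(dS_t)$. The product rule still leaves
\[
\overline g\Bigl(\overline\nabla_{\frac d{dt}}Y_t\big|_{t=0},\ \mu-\cosh\gamma\,\overline\nu\Bigr)=\sinh\gamma\,\overline g\Bigl(\overline\nabla_{\frac d{dt}}Y_t\big|_{t=0},\ N\Bigr),
\]
and this is not zero. The product $\overline g(Y_t,\mu_t-\cosh\gamma\,\overline\nu_t)$ vanishes only at $t=0$, because $\gamma$ is fixed while the contact angle of $\Sigma_t$ moves.

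Differentiate the admissibility constraint $\overline g(Y_t,N\circ x_t)=0$ in $t$. This gives $\overline g(\overline\nabla_{\frac d{dt}}Y_t,N)=-\overline g(Y,\overline\nabla_YN)=-h^{\mathcal P}(Y,Y)$, so the dropped term is $-\sinh\gamma\,h^{\mathcal P}(Y,Y)$. On $\partial\Sigma$ you have $Y=Y^{\partial\Sigma}-\frac{f}{\sinh\gamma}\overline\nu$, and umbilicity of $\mathcal P$ kills the cross term. Hence the dropped term equals
\[
-\sinh\gamma\,h^{\mathcal P}(Y^{\partial\Sigma},Y^{\partial\Sigma})-\frac{f^2}{\sinh\gamma}\,h^{\mathcal P}(\overline\nu,\overline\nu).
\]

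\textbf{This term is exactly what your plan lacks at both places where you hedge.}

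First, the tangential leftover. You have
\[
g\bigl(Y^{\partial\Sigma},(S_1-\cosh\gamma\,S_2)Y^{\partial\Sigma}\bigr)=\sinh\gamma\,h^{\mathcal P}(Y^{\partial\Sigma},Y^{\partial\Sigma})=\sinh\gamma\,\kappa\,|Y^{\partial\Sigma}|^2,
\]
since the $h$-contributions cancel by \eqref{eq:tildeh}. This is neither zero nor a boundary divergence. It is cancelled precisely by the first part of the dropped term.

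Second, the $-\frac{1}{\sinh\gamma}h^{\mathcal P}(\overline\nu,\overline\nu)$ part of $q$ cannot come from the sources you name:
\begin{itemize}
\item $\overline g(\overline\nabla_{\frac d{dt}}\mu_t,\mu)=0$, because $\mu_t$ is a unit field.
\item The $N$-component of $\overline\nabla_{\frac d{dt}}\overline\nu_t$ pairs to zero with $Y$, because $\overline g(Y,N)=0$. You observed this yourself one paragraph earlier.
\end{itemize}
That contribution comes only from the second part of the dropped term.

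Your fallback of reparametrizing so that $Y^{\partial\Sigma}=0$ would hide the tangential discrepancy. It would still leave you with $q=\coth\gamma\,h(\mu,\mu)$, which is wrong whenever $\kappa\neq0$. An example is horospherical supports in de~Sitter space, where $\kappa=1$. Your proposed check on an umbilical cap would flag this mismatch, but your plan has no mechanism to repair it.

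\textbf{The fix.} Keep $-\sinh\gamma\,h^{\mathcal P}(Y,Y)$, as the paper does in \eqref{vterm2}, and expand it via \eqref{hdecomp}. The $Y^{\partial\Sigma}$ terms then cancel identically, and the $f^2$ coefficient becomes exactly $q$.
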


\begin{proof}
From the first variation formula (Lemma~\ref{lem:first-var-unified}), we have
\[
    \mathcal E'(t)=\int_{\Sigma_t} -H(t)\,\overline g(Y_t,\nu_t)\,d\Sigma_t
    +\int_{\partial\Sigma_t} \overline g\bigl(Y_t,\;\mu_t-\cosh\gamma\,\overline\nu_t\bigr)\,dS_t .
\]
Differentiating again, we obtain
\begin{align*}
\mathcal{E}''(0)=& \int_{\Sigma} -H'(0)\,\overline g(Y,\nu)\,d\Sigma
    -H(0)\,\frac{d}{dt}\Big|_{t=0}\!\left(\int_{\Sigma_t}\overline g(Y_t,\nu_t)\,d\Sigma_t\right) \\
    & +\int_{\partial\Sigma}
    \overline g\!\left(
    \left.\overline\nabla_{\frac d{dt}}Y_t\right|_{t=0},
    \mu-\cosh\gamma\,\overline\nu\right)dS \\
    & +\int_{\partial\Sigma}
    \overline g\!\left(
    Y,
    \left.\overline\nabla_{\frac d{dt}}
    \bigl(\mu_t-\cosh\gamma\,\overline\nu_t\bigr)\right|_{t=0}
    \right)dS \\
    & +\int_{\partial\Sigma} \overline g(Y,\mu-\cosh\gamma\,\overline\nu)\,
    \frac{d}{dt}\Big|_{t=0}(dS_t) .
\end{align*}
For a volume-preserving variation,
\[
\frac{d}{dt}\Big|_{t=0}\!\left(\int_{\Sigma_t}\overline g(Y_t,\nu_t)\,d\Sigma_t\right) = -\mathcal V''(0) = 0,
\]
and $\overline g(Y,\mu-\cosh\gamma\,\overline\nu)=0$, because $Y$ is tangent to $\mathcal{P}$ along $\partial\Sigma$.
By the Lorentzian mean-curvature variation formula of
Barbosa--Oliker~\cite{BO93}, with the conventions used here and with
\(H\) constant at \(t=0\),
\begin{equation}\label{varofH}
    \left.\frac{d}{dt}\right|_{t=0}H(t)
    =\Delta f-|h|^{2}f-\overline{\operatorname{Ric}}(\nu,\nu)f
    =Jf.
\end{equation}

Since $\overline g(Y,\nu) = -f$, the first integral equals $\int_{\Sigma} f\,Jf\,d\Sigma$.

To handle the boundary terms, we need
\[
\mathcal{B}:= \overline g\!\left(\left.\overline\nabla_{\frac d{dt}}Y_t\right|_{t=0},\; \mu-\cosh\gamma\,\overline\nu\right)
        + \overline g\!\left(Y,\; \left.\overline\nabla_{\frac d{dt}}\bigl(\mu_t-\cosh\gamma\,\overline\nu_t\bigr)\right|_{t=0}\right) .
\]

Noting that $\mu - \cosh\gamma\,\overline\nu = \sinh\gamma\,N$ and $\overline g(Y,N)=0$
along $\partial\Sigma$, we compute
\begin{equation}\label{vterm2}
    \overline g\!\left(\left.\overline\nabla_{\frac d{dt}}Y_t\right|_{t=0},\; \mu-\cosh\gamma\,\overline\nu\right)
= \sinh\gamma\,\overline g\!\left(\left.\overline\nabla_{\frac d{dt}}Y_t\right|_{t=0},N\right)
= -\sinh\gamma\, h^{\mathcal P}(Y,Y). 
\end{equation}

For the second term, we use parts (2) and (3) of
Lemma~\ref{pointvariation}:
\begin{align}\label{vterm1}
\overline g\!\left(Y,
\left.\overline\nabla_{\frac d{dt}}\mu_t\right|_{t=0}
-\cosh\gamma\left.\overline\nabla_{\frac d{dt}}\overline\nu_t\right|_{t=0}\right)
&=- f\,\nabla_{\mu}f + f^2 \coth\gamma\,h(\mu,\mu)
+ \sinh\gamma\,h^{\mathcal P}(Y^{\partial\Sigma},Y^{\partial\Sigma}) .
\end{align}
Here we have used the boundary decomposition
\[
Y = f\nu + Y^{\top} = f\nu + Y^{\partial\Sigma} - \coth\gamma\,f\mu 
\]
and
\[
g\bigl(Y^{\partial\Sigma},S_{1}(Y^{\partial\Sigma})\bigr)
-\cosh\gamma\,g\bigl(Y^{\partial\Sigma},S_{2}(Y^{\partial\Sigma})\bigr)
= \sinh\gamma\,h^{\mathcal P}(Y^{\partial\Sigma},Y^{\partial\Sigma}) .
\]
We have also used the principal-direction property
$h(Y^{\partial\Sigma},\mu)=0$, which follows from the total
umbilicity of the support and the constant-angle condition.

Now expand $h^{\mathcal P}(Y,Y)$ using the alternative decomposition
$Y = Y^{\partial\Sigma} - \frac{1}{\sinh\gamma} f \overline\nu$:
since the support is totally umbilical,
\[
h^{\mathcal P}(Y^{\partial\Sigma},\overline\nu)
=\kappa\,\overline g(Y^{\partial\Sigma},\overline\nu)=0.
\]
\begin{equation}\label{hdecomp}
    h^{\mathcal P}(Y,Y) = h^{\mathcal P}(Y^{\partial\Sigma},Y^{\partial\Sigma}) + \frac{1}{\sinh^2\gamma} f^2 h^{\mathcal P}(\overline\nu,\overline\nu). 
\end{equation}

Finally, combining \eqref{vterm2}, \eqref{vterm1}, and
\eqref{hdecomp}, we obtain
\begin{align*}
\mathcal{B}
&=\overline g\!\left(\left.\overline\nabla_{\frac d{dt}}Y_t\right|_{t=0},
\mu-\cosh\gamma\,\overline\nu\right)\\
&\quad+\overline g\!\left(Y,
\left.\overline\nabla_{\frac d{dt}}\mu_t\right|_{t=0}
-\cosh\gamma\left.\overline\nabla_{\frac d{dt}}\overline\nu_t\right|_{t=0}\right)\\
&= -f\,\nabla_{\mu}f + \Bigl(\coth\gamma\,h(\mu,\mu)
- \frac{1}{\sinh\gamma}\,h^{\mathcal P}(\overline\nu,\overline\nu)\Bigr) f^{2}\\
&=-f(\nabla_\mu f-qf).
\end{align*}
Combining the interior and boundary contributions, we obtain
\[
\mathcal E''(0)=\int_\Sigma fJf\,d\Sigma
-\int_{\partial\Sigma}f(\nabla_\mu f-qf)\,dS.
\]
This proves \eqref{eq:second-var-unified}.
\end{proof}

\begin{ack}
The authors were partially supported by the National Natural Science
Foundation of China (grant nos.~12471048 and W2521103) and the China
Postdoctoral Science Foundation (grant no.~2025M773111).
\end{ack}


\begin{thebibliography}{99}

 \bibitem{AinouzSouam16}
A.~Ainouz and R.~Souam.
\newblock{Stable capillary hypersurfaces in a half-space or a slab.}
\newblock{Indiana Univ. Math. J.}, {\bf 65} (2016), 813--831.

\bibitem{Akutagawa87}
K. Akutagawa.
\newblock{On spacelike hypersurfaces with constant mean curvature in the de Sitter space.}
\newblock{Math. Z.}, {\bf 196} (1987), 13--19.

\bibitem{BCE}
J.~L.~M. Barbosa, M.~P. do~Carmo and J.-H. Eschenburg.
\newblock{Stability of hypersurfaces of constant mean curvature in Riemannian manifolds.}
\newblock{Math. Z.}, {\bf 197} (1988), no.~1, 123--138.

\bibitem{BO93}
J.~L.~M. Barbosa and V.~I. Oliker.
\newblock{Spacelike hypersurfaces with constant mean curvature in Lorentz space.}
\newblock{Mat. Contemp.}, {\bf 4} (1993), 27--44.

\bibitem{ChenDengXieYin25}
J.~Chen, H.~Deng, H.~Xie and J.~Yin.
\newblock{Stability and rigidity results of space-like hypersurface in the Minkowski space.}
\newblock{arXiv:2506.01012v3, 2025.}



\bibitem{CY}
S. Y. Cheng and S. T. Yau.
\newblock{Maximal Space-like Hypersurfaces in the Lorentz-Minkowski Spaces.}
\newblock{Ann. of Math.}, {\bf 104} (1976), 407--419.


\bibitem{CK15}
J. Choe and M. Koiso.
\newblock{Stable capillary hypersurfaces in a wedge.}
\newblock{Pac. J. Math.}, {\bf 280} (2015), 1--15.

\bibitem{ConcusFinn}
P. Concus and R. Finn.
\newblock{On the Behavior of a Capillary Surface in a Wedge.}
\newblock{Proc. Natl. Acad. Sci.}, {\bf 63} (1969), 292--299.

\bibitem{DamascenoElbert24} 
L. Damasceno and M.~F. Elbert. 
\newblock{Stability of capillary hypersurfaces with constant higher order mean curvature.} 
\newblock{J. Geom. Anal.}, {\bf 34} (2024), no.~12, 377.

\bibitem{Finn}
R. Finn.
\newblock{Equilibrium capillary surfaces.}
\newblock{Springer-Verlag, New York}, (1986).

\bibitem{GuoWangXia22}
J.~Guo, G.~Wang and C.~Xia.
\newblock{Stable capillary hypersurfaces supported on a horosphere in the hyperbolic space.}
\newblock{Adv. Math.}, {\bf 409} (2022), Paper No.~108641, 25~pp.  

\bibitem{LiXiong17}
H.~Li and C.~Xiong.
\newblock{Stability of capillary hypersurfaces with planar boundaries.}
\newblock{J. Geom. Anal.}, {\bf 27} (2017), 79--94.

\bibitem{LiXiong18}
H.~Li and C.~Xiong.
\newblock{Stability of capillary hypersurfaces in a Euclidean ball.}
\newblock{Pac. J. Math.}, {\bf 297} (2018), 131--146.


\bibitem{Lopez06}
R. L\'opez.
\newblock{Spacelike hypersurfaces with free boundary in the Minkowski space under the effect of a timelike potential.}
\newblock{Comm. Math. Phys.}, {\bf 266} (2006), no.~2, 331--342.

\bibitem{Lopez08}
R. L\'opez.
\newblock{Stationary surfaces in Lorentz-Minkowski space.}
\newblock{Proc. Roy. Soc. Edinburgh Sect. A}, {\bf 138} (2008), no.~5, 1067--1096.

\bibitem{Montiel88}
S. Montiel.
\newblock{An Integral Inequality for Compact Spacelike Hypersurfaces in de Sitter Space and Applications to the Case of Constant Mean Curvature.}
\newblock{Indiana Univ. Math. J.}, {\bf 37} (1988), 909--917.

\bibitem{Oliker92}
V. Oliker.
\newblock{A Priori Estimates of the Principal Curvatures of Spacelike Hypersurfaces in de Sitter Space with Applications to Hypersurfaces in Hyperbolic Space.}
\newblock{American J. Math.}, {\bf 114} (1992), 605--626.

\bibitem{PyoSeo11}
J. Pyo and K. Seo.
\newblock{Spacelike capillary surfaces in the Lorentz-Minkowski space.}
\newblock{Bull. Aust. Math. Soc.}, {\bf 84} (2011), no.~3, 362--371.

\bibitem{RosVergasta95}
A.~Ros and E.~Vergasta.
\newblock{Stability for hypersurfaces of constant mean curvature with free boundary.}
\newblock{Geom. Dedic.}, {\bf 56} (1995), 19--33.

\bibitem{RosSouam97}
A.~Ros and R.~Souam.
\newblock{On stability of capillary surfaces in a ball.}
\newblock{Pac. J. Math.}, {\bf 178} (1997), 345--361.

\bibitem{Sato21} 
Y.~Sato. 
\newblock{Totally umbilical submanifolds in pseudo-Riemannian space forms.} 
\newblock{Tsukuba J. Math.}, {\bf 45} (2021), no.~2, 97--116.

\bibitem{T82}
A. Treibergs.
\newblock{Entire spacelike hypersurfaces of constant mean curvature in Minkowski space.}
\newblock{Invent. Math.}, {\bf 66} (1982), 39--56.

\bibitem{WangXia19}
G.~Wang and C.~Xia.
\newblock{Uniqueness of stable capillary hypersurfaces in a ball.}
\newblock{Math. Ann.}, {\bf 374} (2019), 1845--1882.

\end{thebibliography}
\end{document}